\newtheorem{theorem}[subsubsection]{Theorem}
\newtheorem{thm}[subsubsection]{Theorem}
\newtheorem{lemma}[subsubsection]{Lemma}
\newtheorem{cor}[subsubsection]{Corollary}
\newtheorem{prop}[subsubsection]{Proposition}
\theoremstyle{definition}
\newtheorem{defn}[subsubsection]{Definition}
\newtheorem{construction}[subsubsection]{Construction}
\newtheorem{example}[subsubsection]{Example}
\newtheorem{notation}[subsubsection]{Notation}
\newtheorem{convention}[subsubsection]{Convention}
\newtheorem{remark}[subsubsection]{Remark}
\newtheorem{rem}[subsubsection]{Remark}
\numberwithin{equation}{subsection}
\def \into {\hookrightarrow }
\def \to {\rightarrow}
\def \onto {\twoheadrightarrow}
\renewcommand{\projlim}{\varprojlim}
\newcommand{\GL}{\mathrm{GL}}
\newcommand{\Aut}{\mathrm{Aut}}
\def\Mat{\mathrm{Mat}}
\newcommand{\nc}{{\mathrm{nc}}}
\DeclareMathOperator{\rep}{Rep}
\DeclareMathOperator{\Ker}{Ker}
\DeclareMathOperator{\Gal}{Gal}
\DeclareMathOperator{\gal}{Gal}
\DeclareMathOperator{\Fil}{Fil}
\DeclareMathOperator{\fil}{Fil}
\DeclareMathOperator{\gr}{gr}
\def\et{\mathrm{\acute{e}t}}
\def\geom{{\mathrm{geom}}}
\newcommand{\Lie}{\mathrm{Lie}}
\newcommand{\kinfty}{{K_{\infty}}}
\newcommand{\hatkinfty}{{\widehat{K_{\infty}}}}
\newcommand{\gammak}{{\Gamma_K}}
\newcommand{\gammabbk}{{\Gamma_{\mathbb{K}}}}
\newcommand{\dan}{\text{$\mbox{-}\mathrm{an}$}}
\renewcommand{\log}{\mathrm{log}}
\def\dR{{\mathrm{dR}}}
\def\HT{{\mathrm{HT}}}
\def\Sen{{\mathrm{Sen}}}
\def\dif{{\mathrm{dif}}}
\newcommand{\Ainf}{{\mathbf{A}_{\mathrm{inf}}}}
\newcommand{\bdrplus}{{\mathbf{B}^+_{\mathrm{dR}}}}
\newcommand{\bdr}{{\mathbf{B}_{\mathrm{dR}}}}
\newcommand{\bdrnabla}{\mathbf{B}^{\nabla}_{\mathrm{dR}}}
\newcommand{\bht}{\mathbf{B}_{\mathrm{HT}}}
\newcommand{\obdr}{\mathcal{O}\mathbf{B}_{\mathrm{dR}}}
\newcommand{\obdrplus}{\mathcal{O}\mathbf{B}^+_{\mathrm{dR}}}
\newcommand{\obht}{\mathcal{O}\mathbf{B}_{\mathrm{HT}}}
\newcommand{\bbdr}{\mathbb{B}_{\mathrm{dR}}}
\newcommand{\bbdrplus}{\mathbb{B}_{\mathrm{dR}}^{+}}
\newcommand{\bbht}{\mathbb{B}_{\mathrm{HT}}}
 \def \O {{\mathcal{O}}}
\newcommand{\calH}{\mathcal{H}}
   \def \calO {{\mathcal{O}}}
     \def \coc {{\mathcal{O}C}}
\def \ok {{\mathcal{O}_K}}
\def \oc {{\mathcal{O}_C}}
\def \oko {{\mathcal{O}_{K_0}}}
\newcommand*{\wt}[1]{\widetilde{#1}}
\newcommand*{\wh}[1]{\widehat{#1}}
\newcommand{\Zp}{{\mathbb{Z}_p}}
\newcommand{\Qp}{{\mathbb{Q}_p}}
\newcommand{\Fp}{{\mathbb{F}_p}}
\newcommand{\zp}{{\mathbb{Z}_p}}
\newcommand{\qp}{{\mathbb{Q}_p}}
\newcommand{\fp}{{\mathbb{F}_p}}
\newcommand{\ch}{{\mathcal H}}
\newcommand{\cm}{\mathcal{M}}
 \newcommand{\gk}{{G_K}}
\newcommand{\cbf}{\mathbf{c}}
\newcommand{\kbf}{\mathbf{k}}
 \def \kpf {{K^{\mathrm{pf}}}}
\newcommand{\pf}{{\mathrm{pf}}}
 \newcommand{\bbk}{{\mathbb{K}}}
  \newcommand{\bbc}{{\mathbb{C}}}
    \newcommand{\bbC}{{\mathbb{C}}}
 \newcommand{\bbK}{{\mathbb{K}}}
\newcommand{\bbkinfty}{{\bbk_\infty}}
\newcommand{\hatbbkinfty}{{\widehat{\bbk_\infty}}}
 \newcommand{\bbl}{{\mathbb{L}}}
\renewcommand{\phi}{\varphi}
\newcommand{\ocflat}{{\mathcal{O}_C^\flat}}
\newcommand{\crh}{\mathcal{RH}}
\newcommand{\rrh}{\mathrm{RH}}
\newcommand{\bbldrplus}{\mathbb{L}_\dR^+}
\newcommand{\bbldr}{\mathbb{L}_\dR}
\newcommand{\oldr}{\mathcal{O}\mathbf{L}_\dR}
\newcommand{\Ht}{{H_{\underline{t}}}}
\newcommand{\hatlinfty}{{\widehat{L_\infty}}}
\begin{document}

\title[]{On $p$-adic Simpson and Riemann--Hilbert correspondences  in the imperfect residue field case}

\date{\today}

%    Information for first author

  \author[]{Hui Gao}   \address{Department of Mathematics and Shenzhen International Center for Mathematics, Southern University of Science and Technology, Shenzhen 518055, China}   \email{gaoh@sustech.edu.cn}

\subjclass[2010]{Primary  11F80, 11S20}
\keywords{$p$-adic Hodge theory, Sen theory, Riemann--Hilbert correspondence, imperfect residue field case}

\begin{abstract}
Let $K$ be a mixed characteristic complete discrete valuation field with residue field admitting a finite $p$-basis, and let $G_K$ be the Galois group.  
Inspired by Liu and Zhu's construction of  $p$-adic Simpson and Riemann--Hilbert  correspondences  over rigid analytic varieties, we construct such correspondences for representations of $G_K$.
As an application, we prove a  Hodge--Tate (resp. de Rham) ``rigidity" theorem for $p$-adic representations of $G_K$, generalizing a result of Morita.
\end{abstract} 

\maketitle
\tableofcontents

\section{Introduction}

\subsection{Overview and main theorems}

Let $K$ be a CDVF (complete discrete valuation field) of characteristic zero  with residue field $k_K$ of characteristic $p$ such that $[k_K:k_K^p]< \infty$, and let $G_K$ be its absolute Galois group. 
The scope of this paper is to study $p$-adic representations of $G_K$.
Such representations  naturally arise in the study of \emph{relative $p$-adic Hodge theory}.
For example, let $R=\Zp\langle x^{\pm 1}\rangle$ be the (integral) ring corresponding to a rigid torus, and let $\wh{R_{(p)}}$ be the $p$-adic completion of the localization $R_{(p)}$. Then $\wh{R_{(p)}}[1/p]$ is a CDVF with residue field $\Fp(x)$.

 %is isomorphic to $C(\Fp(x))$ ---the Cohen ring for   $\Fp(x)$ ---, and hence   
 
%: the following example illustrates how such CDVFs arise in this context.
%Let us introduce an example of a such CDVF related with rings in 

%\begin{example} \label{examkx}

It turns out that to understand  $p$-adic representations of $\pi_1^{\et}(R[1/p])$  (called \emph{``the relative case"} in the following), it is crucial to understand representations  for these CDVFs   with  imperfect  residue fields  (called \emph{``the imperfect residue field case"} in the following).
Such examples are indeed numerous. For example, Brinon studies the Hodge--Tate, de Rham, and crystalline representations in the imperfect residue field case  in \cite{Bri06}, which then pave the way for the studies in the relative case in \cite{Bri08}. 
For another example, very recently, Shimizu proves a variant of $p$-adic local monodromy theorem in the relative case in \cite{Shipst}; the proof makes crucial use of $p$-adic local monodromy theorem  in the imperfect residue field case  established in \cite{Moritacrys, Ohk13}.

Contrary to the above examples where results in the imperfect residue field case  are  obtained first and then are used in the study of relative case, this paper goes in the opposite direction.
Indeed,  inspired by the work of Liu-Zhu \cite{LZ17} where they construct a $p$-adic Simpson correspondence and a  $p$-adic Riemann--Hilbert correspondence in the relative case, we construct in this paper such correspondences in the imperfect residue field case. 
Before we discuss such correspondences, we start with an application  which is indeed a main motivation of this paper.  
This application, as will be discussed in the ensuing remark, in turn is motivated by considerations in \emph{integral $p$-adic Hodge theory}.
We start by introducing some notations.

\begin{notation}\label{notakl}
Fix an algebraic closure $\overline{K}$ of $K$ and let $G_K:=\gal(\overline{K}/K)$.  Throughout this paper, we use $L$ to denote another mixed characteristic CDVF whose residue field has a finite $p$-basis, and similarly fix some  algebraic closure  $\overline L$ and denote $G_L:=\gal(\overline{L}/L)$. Furthermore, we assume there is some non-zero (hence injective) \emph{continuous} field homomorphism  
 $$i: K \into L.$$  
Fix a such $i$, and extend it to some embedding (still denoted by $i$) of their algebraic closures 
$$i: \overline K \into \overline L.$$
(Such extensions are not unique, but we fix one.)
 With respect to these fixed field embeddings, we obtain a continuous group homomorphism (still denoted by $i$) of their Galois groups 
$$i: G_L \to G_K.$$ 
Given a $p$-adic Galois representation $V$ of $G_K$, we use $V|_{G_L}$ to denote the $p$-adic Galois representation of $G_L$  via the map $i$, and call it the ``restriction" of $V$ to $G_L$ (note though the map  $i: G_L \to G_K$ is not necessarily injective).
\end{notation}

 Given a $p$-adic Galois representation $V$ of $G_K$ of dimension $n$ as above, Brinon defines the Fontaine modules $D_{\HT, K}(V)$ resp. $D_{\dR, K}(V)$ (cf. \S \ref{subsecFonMod}): they are both   $K$-vector spaces of dimension $\leq n$, and $V$ is called Hodge--Tate resp. de Rham if   $\dim_K D_{\HT, K}(V) =n$ resp. $\dim_K D_{\dR, K}(V) =n$. One can similarly define $D_{\HT, L}, D_{\dR, L}$.

\begin{theorem}\label{thmintrorigi} (cf. \S \ref{subsecHT} and \S \ref{subsecdR}.)
Use the above notations. 
\begin{enumerate}
\item We have ``base-change" isomorphisms
\begin{align*}
D_{\HT, K}(V) \otimes_K L & \simeq D_{\HT, L}(V|_{G_L}), \\
  D_{\dR, K}(V) \otimes_K L & \simeq D_{\dR, L}(V|_{G_L}).
\end{align*} 

\item  The $G_K$-representation $V$ is Hodge--Tate (resp. de Rham) if and only if $V|_{G_L}$ (considered as a representation of $G_L$) is so.
\end{enumerate}
\end{theorem}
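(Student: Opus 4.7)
The plan is to deduce Theorem~\ref{thmintrorigi} from the functoriality of the $p$-adic Simpson and Riemann--Hilbert correspondences constructed in this paper, applied to the embedding $i\colon K\inj L$.

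First I would verify that $i$ induces, in a canonical way, continuous $G_L$-equivariant maps on the relevant geometric period rings, most importantly $\obht \to \obht$ and $\obdr\to\obdr$ for the two fields, where the target carries the intrinsic $G_L$-action and the source inherits one via $i\colon G_L\to G_K$; these maps should respect the grading, filtration, and connection. Applying the functor $(V\otimes_{\Qp}-)^{G_L}$ to such maps produces natural comparison arrows
\begin{align*}
D_{\HT, K}(V) \otimes_K L & \longrightarrow D_{\HT, L}(V|_{G_L}), \\
D_{\dR, K}(V) \otimes_K L & \longrightarrow D_{\dR, L}(V|_{G_L}),
\end{align*}
and part (1) is the assertion that these are bijective.

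To prove bijectivity, I would pass through the Simpson and Riemann--Hilbert correspondences of the paper. Under these equivalences a $p$-adic representation $W$ of $G_K$ is sent to a finite-dimensional $K$-vector space equipped with a Higgs field (respectively an integrable connection together with a filtration), from which $D_{\HT,K}(W)$ and $D_{\dR,K}(W)$ can be recovered as kernels, horizontal sections, or appropriate graded pieces. The main assertion I need is then the following compatibility: under $i$, the correspondence sends $V|_{G_L}$ to the object associated to $V$ over $K$, base-changed from $K$ to $L$ along with its Higgs field or connection. Given this compatibility, part (1) follows because taking kernels (resp.\ horizontal sections) commutes with the flat scalar extension $K\to L$. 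Part (2) is then immediate: $V$ is Hodge-Tate (respectively de Rham) exactly when $\dim_K D_{\HT,K}(V)=n$ (resp.\ $\dim_K D_{\dR,K}(V)=n$), and by part (1) the same dimension is attained over $L$, so the two conditions coincide.

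The hardest step will be the functoriality of the Simpson/Riemann--Hilbert constructions along $i$. The construction of the associated Higgs/connection object depends on the choice of a Kummer-type tower $\Kinfty/K$ and of its cyclotomic thickening $\kpinfty$, together with Sen--Tate type operators coming from the $\gammak$-action; the delicate point is to extend these compatibly to $L$ so that the induced maps of period rings intertwine the operators for $K$ and $L$. One has to be careful here because $i\colon G_L\to G_K$ need be neither injective nor surjective, so there is no direct comparison of Galois-invariants: everything must genuinely be pushed through the ring homomorphisms on period rings, and the nilpotence/convergence arguments that underlie the Simpson and Riemann--Hilbert correspondences must be checked to behave well under this base change.
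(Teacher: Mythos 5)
Your overall strategy (use the base-change compatibility of the Simpson and Riemann--Hilbert correspondences, then commute Galois descent with the scalar extension $K\to L$) is the one the paper follows, and you have correctly identified that the first part of this — functoriality of $\calH$, $\mathrm H$, $\crh$, $\rrh$ along $i$ — needs to be (and is) established. However, there is a genuine gap in the middle step, stemming from a mischaracterization of what the correspondences output.

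You write that ``a $p$-adic representation $W$ of $G_K$ is sent to a finite-dimensional $K$-vector space equipped with a Higgs field (resp.\ connection and filtration), from which $D_{\HT,K}(W)$ and $D_{\dR,K}(W)$ can be recovered as kernels, horizontal sections, or graded pieces,'' and that part (1) then follows because these operations ``commute with the flat scalar extension $K\to L$.'' But $\calH(W)$ lives in $\rep_{\Gamma_K}(\widehat{K_\infty})$ (decompleted: $\rep_{\Gamma_K}(K_\infty)$), not over $K$. Recovering $D_{\HT,K}(W)$ from it is not taking a kernel: one has $D_{\HT,K}(W)=\bigoplus_j\bigl(\calH(W(j))\bigr)^{\Gamma_K}$, and $D_{\dR,K}$ analogously involves $\Gamma_K$-invariants along the $t$-adic filtration. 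Comparing $H^0(\Gamma_K,\hat M)\otimes_K L$ with $H^0(\Gamma_L,\hat M\otimes_{\widehat{K_\infty}}\widehat{L_\infty})$ is not a formal flat-base-change statement: the groups $\Gamma_K$, $\Gamma_L$ and the towers $K_\infty$, $L_\infty$ differ, and the coefficient ring changes. This is exactly the content of Corollary~\ref{corhiggsbasechange}, which the paper proves by Tate--Sen decompletion down to a finite layer $K_m$ (Lemma~\ref{lemliuzhu310}), Hochschild--Serre together with Lemma~\ref{lemvanishfincyc}, and flat base change only after reduction to finite level where $\Gamma_{L_s}\simeq\Gamma_{K_s}$. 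Without this cohomological base-change input your argument does not close; it is the second nontrivial ingredient, alongside the functoriality of $\calH$/$\crh$ that you flagged, and your proposal neither identifies it nor supplies a substitute.
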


\begin{remark} \label{remhistory}
We   comment on the history, motivation, and  future outlook of the above theorem.
\begin{enumerate}
\item Note that Item (2) of Thm. \ref{thmintrorigi} is a trivial consequence of Item (1).
We call Item (2) the Hodge--Tate \emph{rigidity} theorem (resp. de Rham \emph{rigidity} theorem) for $p$-adic Galois representations in the imperfect residue field case. The terminology ``rigidity" (sometimes also called ``purity") comes from similar results in the relative case established by Liu-Zhu, Shimizu and Petrov; cf. Rem. \ref{remrigirelative} for more details.

%\cite{LZ17} in the de Rham case, Shimizu \cite{Shi18} in the almost Hodge--Tate case, and Petrov \cite{Pet23} in the Hodge--Tate case.

\item Item (2) of Thm. \ref{thmintrorigi} was proved by Morita \cite{MoritadR} for certain special type embeddings denoted as $K \into \kpf$ (cf. Example \ref{notaspecialbbk}). 
Indeed, in Morita's setting,  $\kpf$ is the ``perfection" field of $K$ (hence in particular $\kpf$ has perfect residue field); furthermore, this embedding has many advantageous features (cf. Rem. \ref{remAdvantagemorita})  that allow Morita to obtain the rigidity theorems by an \emph{explicit} method. However, even in Morita's set-up, he did not obtain Item (1) of Thm. \ref{thmintrorigi}; furthermore, as far as we understand, Morita's method can not work in the general setting.

% and the embedding $K\into \kpf$ factors through a very nice algebraic extension $K \into K^{(\pf)}$ where $K^{(\pf)} \into \kpf$ is dense and $K^{(\pf)}$ has an explicit Galois closure whose Galois group is  an explicit $p$-adic Lie group.   These advantageous features allow Morita to obtain the rigidity theorems by an \emph{explicit} method. 

\item \label{item3} There are natural reasons to consider all possible embeddings $i: K \into L$. Indeed, in the  study  of crystalline (resp. semi-stable) representations, particularly in \emph{integral} $p$-adic Hodge theory, a  ``Frobenius-compatible"  embedding (introduced by Brinon) that we denote as $i_\sigma: K\into \bbk_\sigma$ (cf. Example \ref{examBT} for details) naturally arises, and is not covered by Morita's setting.

\item For \emph{integral} $p$-adic Hodge theory, we have better understanding in the imperfect residue field case (cf. \cite{BT08, Gaoimperf}) than in the  relative case: a major reason being Kedlaya's slope filtration theorem \cite{Ked04, Ked05} is only available in the former setting. As  Thm. \ref{thmintrorigi} is applicable in the natural setting of {integral}  theory in the {imperfect residue field case} (as mentioned in Item \eqref{item3}), we speculate it would be of further use in the development of   relative integral  theory.
\end{enumerate}
\end{remark}

%item Morita proves part of of theorem, but only for certain special type embeddings $K \into \bbk$. For example,  for $K_x$ and $\bbk_x$ as in Example \label{exembedkx}, essentially Morita can only prove Thm. \ref{thmintropurity} for the embeddings where $X \mapsto [\bar{x}]$: such embeddings have several advantages which allow Morita to obtain a rather ``explicit" proof for the theorem.

%As mentioned above, Morita's proof in the special cases use an explicit approach: in fact, he explicitly constructed elements in $D_\HT(V)$ from those in $D_{\HT, \bbk}(V|_{G_\bbk}$ in his setting (and also the de Rham versions). Our approach, in contrast, 

In contrast to Morita's explicit method, our approach is completely conceptual, and is inspired by the work of Liu-Zhu \cite{LZ17} (and subsequent works such as \cite{DLLZ, Shi18, Pet23}).
Let us first quickly recall the $p$-adic Simpson and Riemann--Hilbert correspondences of Liu--Zhu.
Let $k/\qp$ be a finite extension, let $k_\infty$ be the extension by adjoining all $p$-power roots of unity, and let $\widehat{k_\infty}$ be the $p$-adic completion. With respect to the perfectoid field  $\widehat{k_\infty}$, one can construct the usual Fontaine ring $\bdr(\widehat{k_\infty})$. (Instead of $k_\infty$ and $\widehat{k_\infty}$, one can also use the algebraic closure $\bar k$ and its completion $\hat{\bar k}$).
Since we only use Thm. \ref{thmintroLZ17} and Rem. \ref{remrigirelative} for motivational reasons, we refer the readers to the references cited therein for more details.

\begin{theorem}\label{thmintroLZ17}
 \cite{LZ17}
Let $X$ be a smooth rigid analytic variety over $k$.  Then 
\begin{enumerate}
\item there is a tensor functor $\calH$   from the category of $\qp$-local systems on $X_{\et}$ to the category of nilpotent Higgs bundles on $X_{\widehat{k_\infty}}$.
In addition,
%$$ \calH: \text{ $\qp$-local systems on $X_{\et}$  }   \longrightarrow  \text{ nilpotent Higgs bundles on $X_{\widehat{k_\infty}}$ }$$ 

\item there is a tensor functor $\mathcal{RH}$ from the category of $\qp$-local systems on $X_{\et}$ to the category of filtered vector bundles on the ringed space
$\mathcal X = (X_{\widehat{k_\infty}},\calO_X\hat\otimes\bdr(\widehat{k_\infty}))$, equipped with a semi-linear action of $\Gal( {k_\infty}/k)$, and   an integrable connection that  satisfy Griffiths transversality.
\end{enumerate}
The functors $\calH$ and $\crh$ are  called the $p$-adic Simpson correspondence and the $p$-adic Riemann--Hilbert correspondence respectively. They are both compatible with pullback along arbitrary morphisms and  pushforward under smooth proper morphisms.
\end{theorem}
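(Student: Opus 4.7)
The plan is to work in Scholze's pro-\'etale framework on $X_{\proet}$ and exploit the tower of period sheaves: the completed structure sheaf $\widehat{\calO}_X$, the de Rham period sheaves $\bbdrplus \subset \bbdr$, and their geometric enhancements $\obdrplus \subset \obdr$ which carry a natural integrable connection $\nabla$ satisfying $\obdr^{\nabla}=\bbdr$, together with a Hodge-type filtration. Throughout, one passes to $X_{\widehat{k_\infty}}$ so that, \'etale-locally on a toric chart, the ambient Galois group is replaced by the pro-$p$ geometric group $\Gamma$ of a perfectoid tower, with a remaining semi-linear action of $\Gal(k_\infty/k)$ coming from the arithmetic direction.

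For the $p$-adic Simpson functor $\calH$, the construction is: given a $\qp$-local system $\bbl$ on $X_{\et}$, form the pro-\'etale sheaf $\bbl \otimes_{\widehat{\qp}} \widehat{\calO}_X$ and descend it to a locally finite free $\calO_{X_{\widehat{k_\infty}}}$-module by a relative Sen-theoretic argument. Locally on an affinoid toric chart, one identifies the ``arithmetically locally analytic vectors" of $\bbl\otimes_{\widehat{\qp}}\widehat{\calO}_X$ under the full $\Gamma$-action; the infinitesimal generators along the geometric directions assemble into an endomorphism-valued one-form $\theta \in \End(\calH(\bbl))\otimes \Omega^1_X\{-1\}$, whose nilpotency and integrability $\theta\wedge\theta=0$ are formal consequences of the commutativity and topological nilpotence of the infinitesimal $\Gamma$-action.

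For the Riemann--Hilbert functor $\crh$, the recipe is $\crh(\bbl) := \nu_*(\bbl \otimes_{\widehat{\qp}} \obdr)$, where $\nu:X_{\proet}\to X_{\widehat{k_\infty},\an}$ is the natural projection. One then endows $\crh(\bbl)$ with: the filtration coming from $\obdrplus$; the integrable connection coming from $\nabla$ on $\obdr$, for which Griffiths transversality is inherited from the period sheaf; and the semi-linear $\Gal(k_\infty/k)$-action coming from the arithmetic automorphisms of the period sheaves. Tensor compatibility in $\bbl$ is immediate. Taking $\gr^0$ and twisting away the Hodge filtration recovers $\calH(\bbl)$, yielding the expected compatibility between the two functors.

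The main obstacle is the decompletion step: one must show that $\nu_*(\bbl\otimes\obdr)$ is an honest vector bundle of rank $\rank_{\qp}\bbl$ on the ringed space $(X_{\widehat{k_\infty}}, \calO_X\widehat{\otimes}\bdr(\widehat{k_\infty}))$, rather than merely a sheaf of modules. Locally this reduces to a continuous $\Gamma$-cohomology computation on an affinoid perfectoid pro-\'etale cover, of the type used by Scholze for relative de Rham cohomology, combined with a Sen-type ``arithmetic locally analytic vectors" extraction to model a large completed module over the smaller ring $\calO_X\widehat{\otimes}\bdr(\widehat{k_\infty})$. Once this is established, functoriality under pullback is formal from functoriality of period sheaves, and pushforward under smooth proper morphisms follows by combining Scholze's primitive comparison theorem with a Leray spectral sequence for $\nu$ and finiteness of $R\nu_*\obdr$, together with the resulting projection formula.
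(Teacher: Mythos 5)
This theorem is quoted from \cite{LZ17} for motivational purposes and is not proved in the paper; the text explicitly says it refers the reader to the cited references for details, so there is no in-paper proof against which to compare your attempt. That said, your sketch is worth evaluating on its own merits as an outline of Liu--Zhu's argument.

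You capture the broad scaffolding correctly (pro-\'etale framework, period sheaves $\obdr$, descent to $X_{\widehat{k_\infty}}$, local toric charts, pushforward along $\nu$, functoriality from functoriality of period sheaves). The genuine gap is your claim that nilpotency of the Higgs field is ``a formal consequence of the commutativity and topological nilpotence of the infinitesimal $\Gamma$-action.'' It is not formal, and this is precisely where Liu--Zhu's key technical input lives. On a toric chart the geometric part $\Gamma_{\mathrm{geom}}\simeq\Zp(1)^d$ of the tower group acts on a finite-dimensional module over an intermediate decompleted ring, and this action is a priori only \emph{quasi-unipotent}; Liu--Zhu prove this via a compactness argument in the spirit of Grothendieck's $\ell$-adic monodromy theorem (their Lemma~2.15), decompose the module into generalized $\tau$-eigenspaces over finite-order characters $\tau$ of $\Gamma_{\mathrm{geom}}$, and then \emph{define} $\calH(\bbl)$ to be the unipotent part $M_{\mathbf 1}$. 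It is this extraction that simultaneously (a) brings the module down to the smaller base $\calO_{X_{\widehat{k_\infty}}}$ and (b) forces the geometric Sen operators, hence the Higgs field, to be nilpotent. Simply taking ``arithmetically locally analytic vectors'' of $\bbl\otimes\widehat{\calO}_X$ under the full $\Gamma$-action produces a module over a ring that still contains the extra coordinates $t_i^{1/p^\infty}$, and on that larger module the geometric operators need not be nilpotent. The present paper itself highlights this step when adapting the construction to the imperfect residue field case: Step~1 of the proof of Theorem~\ref{thm514} invokes ``exactly the same argument as below [LZ17, Lem.~2.15],'' with the nilpotence input supplied there by Brinon (Proposition~\ref{thmnilpotent}), whereas in \cite{LZ17} it is the quasi-unipotence lemma itself. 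Once this is fixed, the rest of your outline (d\'evissage from $\calH$ to $\crh$ using $\obdr$, filtration and connection from the period sheaf, Griffiths transversality inherited formally, pushforward via the primitive comparison theorem) is a fair sketch of their argument.
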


\begin{remark} \label{remrigirelative}
Let us summarize some ``rigidity" theorems in the relative case. Let $X$ be as in above theorem and suppose it is \emph{geometrically connected}, let $\bbl$ be an \'etale local system.
\begin{enumerate}
\item  Liu--Zhu \cite[Thm. 1.3]{LZ17} shows that $\bbl$ is de Rham if and only if its specialization to \emph{one} classical point  is so. The key argument is that they can prove $D_\dR(\bbl)$ is a \emph{vector bundle} and satisfies a \emph{base change property}.

\item Shimizu \cite[Thm. 1.1]{Shi18} shows that  the generalized Hodge--Tate weights of the $p$-adic Galois representation  $\bbl_{\bar x}$ (the specialization of $\bbl$ at any classical point $x$)  are  {constant}.
A key ingredient in the proof is Shimizu's construction of \emph{decompleted} Simpson   and Riemann--Hilbert correspondences (denote by $\mathrm{H}$ and  $\rrh$ respectively).
Another interesting result is \cite[Thm. 1.5]{Shi18}, which shows that if $\bbl$ is of rank $\leq 2$, then it is Hodge--Tate if and only if  its specialization to \emph{one} classical point is so. The restriction on the rank seems difficult to remove; however, see next item.

\item If in addition  $X$ comes from analytification of a smooth  \emph{algebraic} variety over $k$, then Petrov \cite[Prop. 1.2]{Pet23} shows that $\bbl$ is Hodge--Tate if and only if  its specialization to \emph{one} classical point is so. Petrov's proof makes crucial use of the log-Simpson and log-Riemann--Hilbert correspondences constructed in \cite{DLLZ}.
\end{enumerate}
\end{remark}

Now, let us state our $p$-adic Simpson and Riemann--Hilbert correspondences in the imperfect residue field case.
 Let $K_\infty/K$ be the extension by adjoining all $p$-power roots of unity, and let $\hatkinfty$ be the $p$-adic completion. Let 
$$G_{\kinfty}: =\gal(\overline{K}/\kinfty), \quad \Gamma_K :=\gal(\kinfty/K).$$ 
Let $C$ be the $p$-adic completion of $\overline{K}$, and let $\bdr=\bdr(C)$ be the ``usual" Fontaine's ring associated to the perfectoid field $C$, and let $t$ be the usual Fontaine's element.
Let $\obdr$ be the   de Rham period ring in the imperfect residue field case (which contains $\bdr$).
(There are some subtle issues about $\obdr$, cf. Rem. \ref{remcompleteobdr} for details). Let $\oldr:=(\obdr)^{G_\kinfty}$.
See Notation \ref{notarepsemilin} for  notations of the form ``$\rep_{\mathcal G}(R)$" (the usual category of semi-linear representations).

%; for now, the reader can assume it is a genera

%Let $C$ be the $p$-adic completion of $\overline{K}$. Finally, we need to briefly introduce the de Rham ring in the Riemann--Hilbert correspondence; there are several subtle issues, and we refer to blah for details. For the CDVF $K$, Brinon constructed a ring $\obdr^{\nc}$ as an analogue of Fontaine's ring in the perfect residue field case ``$\bbdr$": it serves as the period ring to construct the Fontaine module $D_{\dR}(V)$. But this ring is not $t$-adically complete (where $t$ is the usual Fontaine's element), and hence is not enough in our development of Riemann--Hilbert correspondence. Define $\obdr$ as its $t$-adic completion.

 \begin{theorem} \label{thmintromain}
 (cf. Thm. \ref{thmSimpson} and Thm. \ref{thmRH} for   precise statements).
There are two commutative diagrams of tensor functors where the vertical functors are tensor equivalences.
\begin{equation*}
\begin{tikzcd}
                        &                                                  & \rep_\gammak(\hatkinfty) \arrow[dd, "\simeq"] \\
\rep_\gk(\qp) \arrow[r] & \rep_\gk(C) \arrow[ru, "\calH"] \arrow[rd, "\mathrm H"'] &                                               \\
                        &                                                  & \rep_\gammak(\kinfty)                        
\end{tikzcd}
\end{equation*}
and
\begin{equation*}
\begin{tikzcd}
                        &                                                           & \rep_\gammak(\oldr) \arrow[dd, "\simeq"] \\
\rep_\gk(\qp) \arrow[r] & \rep_\gk(\bdr) \arrow[ru, "\crh"] \arrow[rd, "\rrh"'] &                                          \\
                        &                                                           & \rep_\gammak(\kinfty((t)))              
\end{tikzcd}
\end{equation*}
\begin{enumerate}
\item 
We call the functor $\calH$ (resp. $\mathrm{H}$) the Simpson correspondence (resp. the decompleted Simpson correspondence). Furthermore, we can upgrade $\calH$ so that it maps to a category of representations equipped with Higgs fields.

\item 
We call the functor $\crh$ (resp. $\rrh$) the Riemann--Hilbert correspondence (resp. the decompleted Riemann--Hilbert correspondence). Furthermore, we can upgrade $\crh$ so that it maps to a category of representations equipped with filtrations and   integrable connections satisfying Griffiths transversality.

\item Let $i: K \into L$ be the continuous embedding as in Notation \ref{notakl}.
All the functors $\calH, H, \crh, \rrh$ (as well as the upgraded versions) satisfy natural \emph{base change properties} with respect to $i$. For a quick example (the other cases are similar), let   $ W\in \rep_{G_K}(C)$, then we have
$$\calH(W) \otimes_{ \hatkinfty}  \hatlinfty  \simeq \calH_{L}(W|_{G_L}\otimes_C C_L)$$
where $ \hatlinfty, C_L $ (resp. $\calH_{L}$) are the analogous fields (resp. the analogous Simpson correspondence functor) for the field $L$. 

%\item If in addition $i: K \into L$ is a finite extension (and we regard $G_L$ as a subgroup of $G_K$), then all the functors $\calH, H, \crh, \rrh$ (as well as the upgraded versions) are compatible with push-forward along $i$; namely, they are compatible with inductions. For example, let $U \in \rep_{G_L}(\qp)$, then we have$$\crh(\Ind_{G_L}^{G_K} U) \simeq \Ind_{\Gamma_L}^{\Gamma_K} \crh_L(U).$$
 \end{enumerate} 
 \end{theorem}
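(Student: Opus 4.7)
The plan is to adapt the strategy of Liu--Zhu \cite{LZ17} and Shimizu \cite{Shi18} from the rigid-analytic setting to the Galois-theoretic setting of $\gk$. All four functors will be built by taking $\gkinfty$-invariants of suitable semi-linear representations, with the vertical equivalences supplied by Sen-type decompletion, which is available for $\gammak$-actions in the imperfect residue field case thanks to Brinon's generalization of classical Sen theory.

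For the Simpson side, given $W\in\rep_\gk(C)$, I would define $\calH(W):=W^{\gkinfty}$, naturally a semi-linear $\gammak$-representation over $\hatkinfty=C^{\gkinfty}$. That $\dim_{\hatkinfty}\calH(W)=\dim_C W$ and $\calH(W)\otimes_{\hatkinfty}C\simeq W$ should follow from the Tate--Sen/Hilbert~90 vanishing $H^1_{\cont}(\gkinfty,\GL_n(C))=1$, which remains valid in the imperfect residue field case. To define the decompleted $\mathrm{H}$, I extract from $\calH(W)$ the subspace of $\gammak$-locally analytic vectors; Brinon's Sen theory then gives that this subspace has the correct $\kinfty$-dimension and re-completes to $\calH(W)$, yielding the vertical equivalence in the first diagram.

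For Riemann--Hilbert, given $M\in\rep_\gk(\bdr)$, I set $\crh(M):=(M\hotimes_\bdr\obdr)^{\gkinfty}$ (using the appropriate topological tensor product, cf.\ Rem.~\ref{remcompleteobdr}); this lies in $\rep_\gammak(\oldr)$ since $\oldr=\obdr^{\gkinfty}$. The correct rank is obtained by filtering by powers of $t$ and reducing each graded piece to the Simpson case, and the decompleted $\rrh$ is constructed in parallel inside $\kinfty((t))\subset\oldr$. The Higgs field and integrable connection upgrades come from the $d$ derivations on $\obdr$ (and on $C$ via $\bht$) induced by a $p$-basis of $k_K$: these commute with $\gk$, so they descend to $\calH(W)$ and $\crh(M)$ respectively, and Griffiths transversality follows from their compatibility with the $t$-adic filtration. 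Base change along $i\colon K\hookrightarrow L$ is then formal: $i$ induces $G_L$-equivariant embeddings of all the relevant period rings, and every functor above is built from invariants and completed tensor products that commute with such base change.

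The main obstacle I anticipate is the careful foundational treatment of $\obdr$ in the imperfect residue field case (the subtlety flagged in Rem.~\ref{remcompleteobdr}): pinning down the right completed tensor product, verifying $\obdr^{\gkinfty}=\oldr$, and proving the necessary continuous cohomology vanishing with $\obdr$-coefficients so that $\crh$ lands in objects of the correct rank. Once these foundational lemmas are in place, the decompletion, the Higgs/connection upgrades, and the base-change statements should follow by essentially formal functorial arguments.
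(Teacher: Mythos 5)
Your proposal has a genuine gap in the Simpson correspondence, and since the Riemann--Hilbert correspondence is built on it by d\'evissage, the gap propagates.

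You define $\calH(W):=W^{\gkinfty}$ and argue that it has the correct $\hatkinfty$-rank because ``the Tate--Sen/Hilbert~90 vanishing $H^1_{\cont}(\gkinfty,\GL_n(C))=1$ remains valid in the imperfect residue field case.'' This is precisely the point where the imperfect residue field case breaks from the classical one. When $k_K$ is imperfect, $\hatkinfty$ is \emph{not} a perfectoid field, and $\gkinfty$ is much larger than the Galois group of a perfectoid field (it contains the geometric part $\Gamma_\geom\simeq\Zp(1)^d$ as well as $\Ht$). The vanishing result in the paper (Prop.\ \ref{thmFon0411}) is stated only for CDVFs $\bbk$ with \emph{perfect} residue field, where $G_{\bbkinfty}$ is the Galois group of the perfectoid field $\hatbbkinfty$; no analogue is available for $\gkinfty$, and indeed the paper expects the naive $W^{\gkinfty}$ to \emph{fail} to have the right rank in general. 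So your Step~1 (Simpson) is built on a claim that is not merely unverified but almost certainly false, and everything downstream (the decompletion, the rank computation for $\crh$ via filtration, the base change arguments) inherits this problem.

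The paper's actual definition is $\calH(W):=(W\otimes_C\coc)^{\gkinfty}$, where $\coc=\gr^0\obdr\simeq C[V_1,\ldots,V_d]$ --- the tensoring with $\coc$ before taking invariants is essential and is the analogue of the Faltings-extension type period sheaf in \cite{LZ17}. To show this has the right rank, one cannot invoke a Hilbert~90 for $\gkinfty$; instead the paper passes through Brinon's Sen theory $\rep_{G_K}(C)\simeq\rep_\Gamma(\widehat{K_\infty^\pf})$ (whose base field \emph{is} perfectoid), decompletes to $\rep_\Gamma(K_\infty^{(\pf)})$, and then uses the quasi-unipotence of the $\Gamma_\geom$-action (Prop.\ \ref{thmnilpotent}(3)) to extract a $\Gamma$-unipotent subspace $H_m$ à la Grothendieck's $\ell$-adic monodromy argument; one then shows via a cohomological computation with the $V_i$'s that this subspace coincides with $(W\otimes_C\coc)^{\gkinfty}$. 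The latter identification is also what makes the construction independent of the auxiliary coordinates $t_i$. Your proposal makes no mention of the quasi-unipotence input, the role of $\coc$, or the need to verify coordinate-independence, so it misses the core of the argument.

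Two smaller remarks. First, the decompletion $\rep_\gammak(\hatkinfty)\simeq\rep_\gammak(\kinfty)$ is established in the paper via Tate's normalized traces and a Tate--Sen type argument (Prop.\ \ref{thm432imperf} and Prop.\ \ref{thmfon24}); a locally-analytic-vectors formulation is plausible but is not what you can directly quote from Brinon, whose theory lives over $K_\infty^{(\pf)}$ and $\widehat{K_\infty^\pf}$, not over $\kinfty$ and $\hatkinfty$. Second, your description of the Riemann--Hilbert step and the connection/Higgs upgrades is broadly in line with the paper (including the crucial use of the $t$-adically complete $\obdr$ from Rem.\ \ref{remcompleteobdr}), but it cannot be carried out until the Simpson step is corrected.
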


Once the above correspondences are established, it is relatively easy (compared to the results in Rem. \ref{remrigirelative}) to obtain Thm. \ref{thmintrorigi}. Indeed, the situation is  much simpler in our case: our $D_{\HT, K}(V)$ and $D_{\dR, K}(V)$ are automatically ``vector bundles"; indeed they are just finite dimensional vector spaces over $K$. It hence suffices to establish the base change property as in Thm. \ref{thmintrorigi}(1):  this will follow  from the  base change property of the Simpson and Riemann--Hilbert correspondences, together with some cohomological computations.

\begin{remark}We now comment on  the relation between Thm. \ref{thmintromain} and various ``Sen theories".
\begin{enumerate}
\item Let $\bbk$ be a CDVF with \emph{perfect} residue field, then  classical Sen theory says that we have a diagram of tensor functors:
\begin{equation*}
\begin{tikzcd}
                             &                                                                                        & \rep_\gammabbk(\hatbbkinfty) \arrow[dd, "\simeq"] \\
\rep_{G_\bbk}(\qp) \arrow[r] & \rep_{G_\bbk}(\bbC) \arrow[ru, "{\calH_\bbk, \simeq}"] \arrow[rd, "{H_\bbk, \simeq}"'] &                                                   \\
                             &                                                                                        & \rep_\gammabbk(\bbkinfty)                        
\end{tikzcd}
\end{equation*}
(Note  in contrast to the general case in Thm. \ref{thmintromain}, all the functors on the triangles are tensor \emph{equivalences}). 
Hence our $p$-adic Simpson   correspondence  could be regarded as a ``generalization" of the classical Sen theory. (Similarly, our Riemann--Hilbert correspondence could be regarded as a ``generalization" of the classical Sen--Fontaine theory reviewed in \S \ref{subsecSenFon}).

\item For general $K$, Brinon \cite{Bri03} also developed a ``Sen theory". Let us only very briefly recall here, cf. \S \ref{subsecBrinonSen} for more details.
Indeed, it is related to the embedding $K \into \kpf$ that we mentioned in Rem. \ref{remhistory}(2), where $\kpf$ is the ``perfection" field of $K$. 
Let  $K_\infty^\pf$ is the cyclotomic extension with $\widehat{K_\infty^\pf}$ being the $p$-adic completion, and let $\Gamma:=\Aut(C/K_\infty^\pf)$. Then Brinon constructs a tensor equivalence (there is also a ``decompleted" version) 
$$\rep_{G_K}(C) \simeq \rep_\Gamma(\widehat{K_\infty^\pf}).$$
Here are some general remarks:
\begin{enumerate}
\item   The embedding $K \into \kpf$, and hence  Brinon's theory, depend on choices of ``local coordinates" $t_i$, cf. Notation \ref{notatitflat}.
In addition, Brinon's theory has no \emph{functoriality} (or base change property) with respect to embeddings $i: K \into L$.

\item Morita's proof of Hodge--Tate rigidity  with respect to  $K \into \kpf$ (cf. Rem. \ref{remhistory}(2)) makes use of Brinon's theory.
\item The field $\widehat{\kinfty}$ in our Simpson correspondence is a  {subfield} of $\widehat{K_\infty^\pf}$, hence our theory can be regarded as a \emph{refinement} of Brinon's theory.
 Note in particular the field $\widehat{\kinfty}$ (unlike $\widehat{K_\infty^\pf}$) is independent of the ``local coordinates" $t_i$.
\end{enumerate}
 
%, and hence can be regarded as a generalization of Sen--Fontaine theory in the perfect residue field case. 

\item Andreatta--Brinon \cite{AB10} then further developed a ``Sen--Fontaine theory" for general $K$, cf. \S \ref{subsecAB10}, which \emph{lifts} Brinon's Sen theory, and is used in Morita's proof of de Rham rigidity with respect to $K \into \kpf$.
In comparison, our Riemann--Hilbert correspondence   \emph{lifts}  the Simpson correspondence, which however is via a very \emph{different} lifting process. Indeed,  Andreatta--Brinon's theory is a ``$(\ker\theta)$-adic lifting", whereas ours is a ``$t$-adic lifting"; compare the diagrams in Rem. \ref{remRelationABBrinon} and Rem. \ref{remRelationHRH}.
\end{enumerate}
\end{remark}

Let us give some brief technical remarks about the proof of Thm. \ref{thmintromain}. The  equivalence $\rep_\gammak(\kinfty) \simeq \rep_\gammak(\hatkinfty)$ is proved using similar ideas as in the perfect residue field case, namely, via  Tate's normalized traces.
The key part then is to construct the functor $\rep_{G_K}(C) \to \rep_\gammak(\hatkinfty)$, and this is where we are inspired by Liu--Zhu's construction, which uses some   argument from Grothendieck's proof of $\ell$-adic monodromy theorem. Finally, the Riemann--Hilbert correspondence can be obtained by   d\'evissage argument.

\subsection{Structure of the paper}
In \S \ref{secfontaine}, we review (Hodge--Tate and de Rham) Fontaine rings and Fontaine modules; then in \S \ref{secKL}, we introduce similar notations with respect to the embedding $K \into L$. In \S \ref{secTateSen}, we discuss Sen theory and Tate--Sen decompletion, which are used in \S \ref{secSimpson} to construct the Simpson correspondence and to prove the Hodge--Tate rigidity theorem. Via d\'evissage argument, we discuss Sen--Fontaine theory and Tate--Sen--Fontaine decompletion in \S \ref{secSenFontaine}, which are used in \S \ref{secRH} to construct the Riemann--Hilbert correspondence and to prove the de Rham rigidity theorem.

\subsection{Notations and conventions}

\begin{notation}\label{notarepsemilin}
Suppose $\mathcal G$ is a topological group that acts continuously on a topological ring $R$. We use $\rep_{\mathcal G}(R)$ to denote the category where an object is a finite free $R$-module $M$ (topologized via the topology on $R$) with a continuous and \emph{semi-linear} $\mathcal G$-action in the usual sense that 
$$g(rx)=g(r)g(x), \forall g\in \mathcal G, r \in R, x\in M.$$
(The only case in this paper where the action is \emph{linear} is when $R=\qp$). Morphisms in the category are the obvious ones.
\end{notation}

\subsubsection{Locally analytic vectors}\label{subsubLAV}
Let us very quickly recall the notion of {locally analytic vectors}, which is  particularly convenient  in the study of \emph{Sen theory}, see \cite[\S 2.1]{BC16} and \cite[\S 2]{Ber16} for more details. Recall the multi-index notations: if $\cbf = (c_1, \hdots,c_d)$ and $\kbf = (k_1,\hdots,k_d) \in \mathbb{N}^d$ (here $\mathbb{N}=\mathbb{Z}^{\geq 0}$), then we let $\cbf^\kbf = c_1^{k_1} \cdot \ldots \cdot c_d^{k_d}$.
Let $G$ be a $p$-adic Lie group, and let $(W, \|\cdot \|)$ be a $\Qp$-Banach representation of $G$.
Let $H$ be an open subgroup of $G$ such that there exist coordinates $c_1,\hdots,c_d : H \to \Zp$ giving rise to an analytic bijection $\cbf : H \to \Zp^d$.
 We say that an element $w \in W$ is an $H$-analytic vector if there exists a sequence $\{w_\kbf\}_{\kbf \in \mathbb{N}^d}$ with $w_\kbf \to 0$ in $W$, such that \[g(w) = \sum_{\kbf \in \mathbb{N}^d} \cbf(g)^\kbf w_\kbf, \quad \forall g \in H.\]
Let $W^{H\dan}$ denote the space of $H$-analytic vectors.
We say that a vector $w \in W$ is \emph{locally analytic} if there exists an open subgroup $H$ as above such that $w \in W^{H\dan}$.

% Let $W^{G\dla}$ denote the space of such vectors. We have $W^{G\dla} = \cup_{H} W^{H\dan}$ where $H$ runs through  open subgroups of $G$. We can naturally extend these definitions to the case when $W$ is a Fr\'echet- or LF- representation of $G$, and use $W^{G\dpa}$ to denote the \emph{pro-analytic} vectors, cf. \cite[\S 2]{Ber16}.

\subsubsection{Some other conventions.} All group cohomologies are \emph{continuous} group cohomologies. We use $\Mat(\ast)$ to denote the set of matrices with elements in $\ast$; the size of the matrices will be clear from the context.

\subsection*{Acknowledgement}
We thank Laurent Berger, 
Olivier Brinon, Heng Du, Shizhang Li, Ruochuan Liu, Tong Liu, Shun Ohkubo, and Yimeng Tang
for some useful discussions and correspondences.
Special thanks to Yupeng Wang for several inspiring discussions and   comments.
We thank the anonymous referees for very careful readings and useful suggestions.
The author is partially supported by the National Natural Science Foundation of China under agreements   NSFC-12071201,  NSFC-12471011.

\section{Fontaine rings and Fontaine modules}\label{secfontaine} 

In this section, we review Fontaine rings and Fontaine modules in the imperfect residue field case.

\begin{notation} \label{nota: field K}
Let $K$ be a complete discrete valuation field of characteristic $0$ with residue field $k_K$ of characteristic $p>0$ such that $[k_K:k_K^p]=p^d$ where $d\geq 0$.  
Let $\ok$ be the ring of integers.
 \emph{Fix} an algebraic closure $\overline{K}$ of $K$, and let $G_{K}=\Gal(\overline{K}/K)$.
Let $C$ be the $p$-adic completion of $\overline K$.
Let $\varepsilon_0=1$, let $\varepsilon_1 \in \overline{K}$ be a primitive $p$-th root of unity, and inductively define a sequence $\{\varepsilon_n\}_{n\geq 0} $ in $ \overline{K}$ where $\varepsilon_{n+1}^p=\varepsilon_n, \forall n$.
\end{notation}

\begin{rem} We remark on the various \emph{choices} made in this paper.
\begin{enumerate}
\item One can readily check that different choices of $\overline{K}$ produce ``equivalent results"  for all constructions in this paper.

\item We will use the fixed $\{\varepsilon_n\}_{n\geq 0}$ to define Fontaine's usual element $t$. A different choice of these elements also produce ``equivalent results"  for all constructions in this paper.

\item In Notation \ref{notatitflat}, we will introduce a system of ``coordinates" $t_i$'s for $K$: these coordinates make it possible to write out ``explicit" expressions for the period rings such as in Prop. \ref{propucoor}. Note that 
\begin{enumerate}

\item The main result in \S \ref{secfontaine}, namely the Fontaine module theory Prop. \ref{propFonModule}, is \emph{independent}
 of these $t_i$'s. (So is the connection operator $\nabla$, cf. the paragraph above Notation \ref{drnabla}).

\item The embedding $K\into \kpf$ in Example \ref{notaspecialbbk}  not only \emph{depends} on $t_i$, it further depends on choices of $p$-power roots of them. This means that Brinon's Sen theory reviewed in \S \ref{subsecBrinonSen} and hence Morita's result Thm. \ref{thmmorita} \emph{depend} on these choices.

\item In contrast, our main results on Simpson and Riemann--Hilbert correspondences are  \emph{independent}
 of these coordinates $t_i$'s (although we do use them for intermediate computations).

\end{enumerate}

\end{enumerate}
\end{rem}

 \subsection{Fontaine rings}\label{sec21}

In this subsection, we review some Fontaine period rings in the imperfect residue case. 

\begin{convention}
In this paper, we choose to use the ``relative"-style notations (e.g., similar to those in \cite{LZ17}), since our main constructions are parallel to those in the relative case. Nonetheless, most of the results in this subsection are developed in \cite{Bri06}. Here, we compare the notations:
\begin{enumerate}
\item Our $C, \Ainf(\O_C/\O_{K})$ are the same as those in \cite{Bri06}. Our notation $\coc$ does not appear in \cite{Bri06} (but the ring  is used there).
\item Our $\bdr, \obdr^{+}, \obdr^{\nc}$ correspond to ``$\bdrnabla, \bdrplus, \bdr$" in \cite{Bri06} respectively. Our $ \obdr$ is not defined in \cite{Bri06}. (There is indeed some subtle issues about these de Rham period rings, cf. Rem. \ref{remcompleteobdr}.)

\item Our $\bht, \obht$ correspond to ``$\gr \bdrnabla, \bht$" in \cite{Bri06} respectively. 
\end{enumerate}
\end{convention}

\begin{notation}
 Since $\O_C$ is a perfectoid ring, we can define its tilt $\O_C^\flat$; let $W(\O_C^\flat)$ be the ring of Witt vectors. 
 Elements in $\O_C^\flat$ are in bijection with sequences $(x^{(n)})_{n\geq 0}$ where $x^{(n)} \in \O_C$ and $(x^{(n+1)})^p=x^{(n)}$. Let $\underline{\varepsilon} \in \O_C^\flat$  be the element defined by the sequence  $\{\varepsilon_n\}_{n\geq 0}$, and let $[\underline{\varepsilon}] \in W(\O_C^\flat)$ be its Teichm\"uller lift.
Let 
 $$\theta: W(\O_C^\flat) \to \oc$$
  be the usual Fontaine's map, it extends to
$$\theta: W(\O_C^\flat)[1/p] \to C.$$
Both these maps have principle kernels generated by $\frac{[\underline \varepsilon]-1}{[\underline \varepsilon^{1/p}]-1}$.
\end{notation}

\begin{notation}
Let $\bdrplus$ be the ($\ker \theta$)-adic completion of $W(\O_C^\flat)[1/p]$, and hence the $\theta$-map extends to
$$\theta: \bdrplus \to C.$$
Let $t:=\log [\underline \varepsilon] \in \bdrplus$ be the usual element, which is also a generator of the kernel of the $\theta$-map on $\bdrplus$.
 We  equip $\bdrplus$ with the $t$-adic filtration, and extend it to a filtration on $\bdr: =\bdrplus[1/t]$ via $\fil^i \bdr: =t^i \cdot \bdrplus, \forall i \in \mathbb Z$. Define 
$$\bht := \bigoplus_{i \in \mathbb Z} \gr^i(\bdr).  $$
Note that the $G_K$-action on $W(\O_C^\flat)$ naturally induces actions on $\bdr$ and $\bht$.
\end{notation}

\begin{defn}
 By scalar extension,  the $\theta$-map induces an ($\ok$-linear) map 
$$\theta_{K}:  \ok\otimes_{\mathbb{Z}} W(\ocflat) \to \oc.$$ Let 
\begin{equation} \label{eqainfocok}
\Ainf(\O_C/\O_{K}): = \projlim_{n \geq 0}  \left( \ok\otimes_{\mathbb{Z}} W(\O_C^\flat) \right)/\left(  \theta_{K}^{-1}(p\O_C)  \right)^n. 
\end{equation}
Then the $\theta_{K}$-map extends to 
$$\theta_{K}: \Ainf(\O_C/\O_{K})  \to \O_{C} \quad \text{ and } \quad \theta_{K}: \Ainf(\O_C/\O_{K}) \otimes_{\Zp} \Qp \to C.$$
 Let 
$$\obdrplus:= \projlim_{n \geq 1} (\Ainf(\O_C/\O_K) \otimes_{\Zp} \Qp)/\left(\Ker \theta_K \right)^n.  $$
Note that there is a canonical map $\bdrplus \to \obdrplus$, and hence we can regard $t$ as an element in $\obdrplus$.
The $\theta_K$-map extends to 
$$\theta_K: \obdrplus \to C.$$
Let $m_{\dR}:=\Ker \theta_K$ for the above map.
Define a $\mathbb{Z}^{\geq 0}$-filtration on $\obdrplus$ where 
 $$\mathrm{fil}^i \obdrplus:=m_{\dR}^i, \forall i \geq 0.$$
 Define
 $$\obdr^{ \nc}:=\obdrplus[1/t].$$
 (Here, the superscript ``nc" stands for ``non-complete", as will be explained in Rem. \ref{remcompleteobdr}.)
Using $\mathrm{fil}^i$ on $\obdrplus$, define a $\mathbb{Z}$-filtration on $\obdr^{\nc}$ by:
\begin{eqnarray*}
\Fil^0 \obdr^{\nc} &:=& \sum_{n=0}^\infty t^{-n}\cdot \mathrm{fil}^n \obdrplus \\
\Fil^i \obdr^{\nc}  &:=& t^i \cdot  \Fil^0\obdr^{\nc} , \forall i \in \mathbb{Z}.
\end{eqnarray*} 
$\obdr^{\nc} $ carries a natural $G_K$-action such that $\Fil^i \obdr^{\nc} $ is stable under the action.
\end{defn}

\begin{remark}\label{remcompleteobdr}
Given a ring $R$ with a  decreasing filtration $\{\fil^i R\}_{i \in \mathbb Z}$, we say this filtration is \emph{complete} if for each $i$,  we have $\projlim_{j \geq 1} \fil^i R/\fil^{i+j} R =\fil^i R$.
\begin{enumerate}
\item Note that the ring $\obdrplus$ is $t$-adically complete, hence the filtration   $\{t^i  \obdrplus\}_{i \in \mathbb Z}$ is a complete filtration on $\obdr^{ \nc}$. However, this is not the correct filtration for Fontaine module theory; the correct one is the $\Fil^i \obdr^{ \nc}$ filtration defined above. As we will see in below, the ring $\obdr^{\nc} $ together with this filtration is \emph{enough} to establish the usual Fontaine module theory, cf. Rem. \ref{rem223}.

\item 
However, a subtle issue is that the filtration $\Fil^i \obdr^{ \nc}$ is \emph{not} complete (unless $k_K$ is perfect)!  As observed by Shimizu and noted in \cite[Rem. 2.2.11]{DLLZ}, this causes trouble in the construction of Riemann--Hilbert correspondences, which is a \emph{lift} of the Simpson correspondence. Fortunately (cf. \emph{loc. cit.}), this issue can be resolved  by introducing the $t$-adic completions of the ring and its filtrations, cf. Def. \ref{notaobdr}.
\end{enumerate} 
\end{remark}

\begin{defn} \label{notaobdr}
Note that $\Fil^0 \obdr^{\nc}$ is a ring, and hence define its $t$-adic completion:
$$ \Fil^0 \obdr: =\projlim_{n >0}(\Fil^0 \obdr^{\nc})/t^n. $$
Let $$\obdr := (\Fil^0 \obdr)[1/t],$$
 and equip it with the $t$-adic filtration, namely
$$\fil^i \obdr := t^i \cdot \Fil^0 \obdr, \forall i \in \mathbb Z.$$
\end{defn}

\begin{remark}\label{remobdrplus}
Given a CDVF with \emph{perfect} residue field, then with notations in the classical setting (cf. Notation \ref{notabbkperf}), there is a \emph{co-incidence} $$\fil^0 \bbdr =\bbdrplus.$$ In the general imperfect residue field case (or in relative case), the natural maps
\[ \obdrplus \to  \fil^0 \obdr^\nc \to \fil^0 \obdr\]
are in general only injective. Indeed, we shall  use $\fil^0 \obdr$ as the ``effective" de Rham period ring.
\end{remark}

\begin{defn}
For $-\infty \leq a \leq b \leq +\infty$, define
\begin{eqnarray*}
 \obdr^{\nc, [a, b]} :=\fil^a \obdr^{\nc}/\fil^{b+1}\obdr^{\nc},\\
 \obdr^{  [a, b]} :=\fil^a \obdr /\fil^{b+1}\obdr,
\end{eqnarray*}
where $\fil^{-\infty} \obdr^{\nc} =\obdr^{\nc}, \fil^{+\infty} \obdr^{\nc}=0$ and similarly for $\obdr$. Note that since $\Fil^i \obdr$ is the $t$-adic completion of $\Fil^i \obdr^{\nc}$ for all $i \in \mathbb Z$, we have
$$\obdr^{\nc, [a, b]} = \obdr^{  [a, b]}, \forall a, b \neq \pm \infty.$$
In particular, we have $\gr^i \obdr^{\nc} \simeq \gr^i \obdr, \forall i \in \mathbb Z$. Thus, we can define 
$$\coc: = \gr^0 \obdr^{\nc} \simeq \gr^0 \obdr$$
$$\obht : =\bigoplus_{i \in \mathbb Z} \gr^i \obdr^{\nc} \simeq \bigoplus_{i \in \mathbb Z} \gr^i  \obdr$$
\end{defn}
 
\begin{remark}
The relations of all the ``de Rham rings" are summarized in the following diagram where all rows are short exact.
\begin{equation}
\begin{tikzcd}
0 \arrow[r] & \Fil^1 \bdrplus \arrow[r] \arrow[d, hook]                & \bdrplus \arrow[r, "\theta"] \arrow[d, hook]                                      & C \arrow[r] \arrow[d, "="]       & 0 \\
0 \arrow[r] & \mathrm{fil}^1 \obdrplus \arrow[r] \arrow[d, hook]                & \obdrplus \arrow[r, "\theta_K"] \arrow[d, hook]                                     & C \arrow[r] \arrow[d, hook]      & 0 \\
0 \arrow[r] & \Fil^1 \obdr^{\mathrm{nc}} \arrow[r] \arrow[d, hook] & \Fil^0 \obdr^{\mathrm{nc}} \arrow[r, "\bmod t "] \arrow[d, hook] & \calO C \arrow[r] \arrow[d, "="] & 0 \\
0 \arrow[r] & \Fil^1 \obdr \arrow[r]                               & \Fil^0 \obdr \arrow[r, "\bmod t "]                                             & \calO C \arrow[r]                & 0
\end{tikzcd}
\end{equation}
\begin{enumerate}
\item We claim in fact the three squares in the left side are all Cartesian: namely, the embeddings of the various ``de Rham rings" in the center column are in fact \emph{strict} with respect to their filtrations (where we use $\mathrm{fil}^i$-filtration  for $\obdrplus$). The only non-trivial part of the claim is about the square in the second floor, see \cite[Prop. 2.20]{Bri06} for the proof.

\item Note \cite[Prop. 2.20]{Bri06} cited above shows that the $m_\dR$-adic filtration on $\obdrplus$ \emph{coincides} with its \emph{induced} filtration from that on $\fil^0 \obdr$. Thus, in particular, $\obdrplus$ is a \emph{filtration complete subring}   inside $\fil^0 \obdr$. 

\item To ease the mind, let us mention that in our main construction  (in the Riemann--Hilbert correspondence), we only use the following small part of the above diagram: 
\begin{equation*}
\begin{tikzcd}
    0 \arrow[r]        &                       \Fil^1 \bdrplus \arrow[r]\arrow[d, hook] &                                     \bdrplus \arrow[r, "\bmod t"]\arrow[d, hook]  & C \arrow[d, hook] \arrow[r] & 0   \\
0 \arrow[r] & \Fil^1 \obdr \arrow[r] & \Fil^0 \obdr \arrow[r, "\bmod t"] & \calO C \arrow[r] & 0
\end{tikzcd}
\end{equation*}
\end{enumerate}
\end{remark}

\begin{notation}\label{notatitflat}
  Recall $[k_K: k_K^p]=p^d$. Let $\bar{t_1}, \cdots, \bar{t_d}$ be a $p$-basis of $k_K$, and for each $i$,  let $t_i \in \ok$ be some fixed lift of $\bar{t_i}$.
For each $m \geq 0$, fix an element $t_{i, m}\in \overline K$ where $t_{i, 0}=t_i$ and $(t_{i, m+1})^p =t_{i, m}, \forall m$. By \emph{abuse of notations}, we simply denote
$$t_i^{\frac{1}{p^m}}: = t_{i, m}.$$ 
Note that the sequence $\{t_i^{\frac{1}{p^m}}\}_{m \geq 0}$ defines an element in $\ocflat$, which we denote as $t_i^\flat$. Let $[t_i^\flat] \in W(\ocflat)$ be the Teichm\"uller lift. Let 
$$u_i =t_i \otimes 1 -1\otimes [t_i^\flat] \in \ok \otimes_{\mathbb{Z}} W(\O_C^\flat).$$
\end{notation}

We (briefly) recall some constructions above \cite[Prop. 2.9]{Bri06}.
There is a natural homomorphism $\bdrplus \to \obdrplus$. Note that $u_i \in m_{\dR}$ and $\obdrplus$ is complete with respect to the $m_{\dR}$-adic topology, hence there is a natural homomorphism
$$f: \bdrplus[[u_1, \cdots, u_d]] \to \obdrplus.$$
%In addition, we can extend the map $\theta: \bdrplus \to C$ to 
%$$\theta': \bdrplus[[u_1, \cdots, u_d]] \to C, \text{ where } u_i \mapsto 0.$$ Then we can 

\begin{prop} \label{propucoor}
\cite[Prop. 2.9, Prop. 2.11]{Bri06} 
\begin{enumerate}
\item The above homomorphism $f: \bdrplus[[u_1, \cdots, u_d]] \to \obdrplus $ is an isomorphism; in addition, $f(u_i) \in \fil^1 \obdr^\nc$.

\item Let $\bar{u}_i$ be the image of $f(u_i)$ in  $\gr^1 \obdr^\nc$, then $f$ induces an isomorphism of graded rings
$$\obht \simeq C[t^{\pm 1}, \bar{u}_1, \cdots, \bar{u}_d]$$
where the right hand side is graded by total degree (where $t^{-1}$ has degree $-1$). Hence, there is an isomorphism 
$$\coc \simeq C[\frac{\bar{u}_1}{t}, \cdots, \frac{\bar{u}_d}{t}].$$
\end{enumerate}
\end{prop}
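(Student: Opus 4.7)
The plan is to compare filtrations on both sides and pass to associated graded rings. First, $f$ is well-defined: since $\theta_K(u_i) = t_i - t_i = 0$, each $u_i$ lies in $m_{\dR}$, and combined with $\ker\theta \subseteq \ker\theta_K$ this places both $\fil^1 \bdrplus$ and the $u_i$ inside $m_{\dR}$. Because $\obdr^{+, \nc}$ is $m_{\dR}$-adically complete by construction, the obvious polynomial map $\bdrplus[u_1, \cdots, u_d] \to \obdr^{+, \nc}$ extends continuously to the power series ring. The statement $f(u_i) \in \fil^1 \obdr^{\nc}$ is then built into the definition.

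I would endow the source with the weighted filtration in which $t$ and each $u_i$ have degree one, so that $\fil^n$ consists of series whose every term $a \cdot u^{\alpha}$ with $a \in \fil^{j}\bdrplus$ satisfies $j + |\alpha| \geq n$. Then $f$ is a filtered map, the source is complete for this filtration, and its associated graded is the polynomial ring $C[\bar t, \bar u_1, \cdots, \bar u_d]$. It therefore suffices to prove that the induced map
\[
\gr f \colon C[\bar t, \bar u_1, \cdots, \bar u_d] \longrightarrow \bigoplus_{n \geq 0} m_{\dR}^n / m_{\dR}^{n+1}
\]
is an isomorphism; completeness on both sides will then lift this to the statement of the proposition.

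The heart of the argument is the degree-one identification $m_{\dR}/m_{\dR}^2 \simeq C\bar t \oplus C\bar u_1 \oplus \cdots \oplus C\bar u_d$. Surjectivity reduces to showing $m_{\dR} = (\ker\theta)\obdr^{+, \nc} + (u_1, \cdots, u_d)\obdr^{+, \nc} + m_{\dR}^2$; here I would exploit that $\ok$ is topologically generated over $\Zp$ by a uniformizer together with the lifted $p$-basis $t_1, \cdots, t_d$, so that after killing the $u_i$ (identifying $t_i \otimes 1$ with $1 \otimes [t_i^\flat]$) and $\ker\theta$ (handling the uniformizer part), the quotient of $\ok \otimes_{\Zp} W(\ocflat)$ maps onto $C$. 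For $C$-linear independence of $\{\bar t, \bar u_1, \cdots, \bar u_d\}$, I would produce distinguishing derivations $\obdr^{+, \nc} \to C$, equivalent to computing Kähler differentials: the $p$-basis hypothesis makes $\Omega^1_{\ok/\Zp}$ free of rank $d$ and accounts for the classes $\bar u_i$, while the $\bar t$ direction comes from the perfectoid factor $W(\ocflat)[1/p]$. Once degree one is settled, both graded algebras are generated in degree one by $d+1$ elements with matching Poincaré series, forcing isomorphism in every degree.

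Part (2) is a formal consequence: inverting $t$ passes the graded isomorphism from (1) to $\obht \simeq C[\bar t^{\pm 1}, \bar u_1, \cdots, \bar u_d]$ with total-degree grading, and extracting the degree-zero summand gives $\coc \simeq C[\bar u_1/\bar t, \cdots, \bar u_d/\bar t]$. The main obstacle is the conormal computation in the previous paragraph --- cleanly separating the $t$-contribution from the $u_i$-contributions and proving genuine $C$-linear independence modulo $m_{\dR}^2$ is where the hypothesis $[k_K : k_K^p] = p^d$ enters decisively and where all the content of Brinon's calculation sits.
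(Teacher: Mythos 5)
The paper gives no proof of this proposition; it is quoted verbatim from Brinon's \cite[Prop.~2.9, Prop.~2.11]{Bri06}, so your proposal is being measured against Brinon's argument. Your overall strategy (pass to associated gradeds for a weighted filtration, reduce to the conormal module in degree one, then lift by completeness) is the natural one and matches the architecture of Brinon's proof. But there are two genuine gaps in the key step.

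First, the assertion that ``$\ok$ is topologically generated over $\Zp$ by a uniformizer together with the lifted $p$-basis $t_1,\cdots,t_d$'' is simply false. Take $k_K=\overline{\mathbb F}_p(x)$, so $d=1$: the Cohen ring contains Teichm\"uller lifts of every element of $\overline{\mathbb F}_p$, none of which can be approximated by polynomials in $p$ and a lift of $x$. What is actually available, and what Brinon uses, is only the \emph{$p$-basis} hypothesis $k_K=k_K^p[\bar t_1,\cdots,\bar t_d]$. Surjectivity of $(\ker\theta)+(u_1,\cdots,u_d)\onto m_\dR/m_\dR^2$ has to be established by a Frobenius argument modulo $p$: for $\bar a\in k_K$, write $\bar a$ as a polynomial in the $\bar t_i$ with coefficients in $k_K^p$, use $ab\otimes 1-1\otimes ab=(a\otimes 1)(b\otimes 1-1\otimes b)+(a\otimes 1-1\otimes a)(1\otimes b)$ to isolate the $\bar t_i$-contribution, and absorb the coefficient contribution into $J^p\subset m^2$ where $J$ is the ideal generated by the $\bar a\otimes 1-1\otimes\bar a$. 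This is precisely where the finite $p$-basis enters; ``topological finite generation'' is not the right mechanism. Second, even granting the degree-one isomorphism, your closing sentence that the two graded algebras have ``matching Poincar\'e series, forcing isomorphism in every degree'' is unjustified: a surjection of graded algebras that is an isomorphism in degree one need not be injective in higher degrees unless you know there are no extra relations in the target. To close this, one must show that $t,u_1,\cdots,u_d$ form a regular sequence (equivalently, prove directly that $\gr\obdr^{+,\nc}$ has the correct Hilbert function), which is an additional nontrivial step that Brinon carries out and that you omit. The linear-independence argument via distinguishing derivations and $\hat\Omega_\ok$ is on the right track, as is the deduction of Part (2) once Part (1) is known.
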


 \begin{notation}\label{notaVi}
Define
 $$V_i= \frac{1}{t} \log(\frac{[t_i^\flat]}{t_i}  ).$$
 These are elements of $\fil^0 \obdr^{\nc}$, and by abuse of notation we also regard $V_i$ as an element  in $\gr^0 \obdr^{\nc}=\coc$. Then by  Prop. \ref{propucoor}, we can deduce
 \begin{eqnarray*}
 \obdrplus  \simeq & \bdrplus[[tV_1, \cdots, tV_d]]  \\
 \coc  \simeq & C[V_1, \cdots, V_d]
 \end{eqnarray*}
 \end{notation}

\subsection{Fontaine modules} \label{subsecFonMod}
\begin{defn}
For $V \in \rep_\gk(\qp)$, define
\begin{eqnarray*}
D_{\dR, K}^\nc(V) = &(V\otimes_\qp \obdr^\nc)^{G_K}\\
\quad D_{\dR, K}(V) = &(V\otimes_\qp \obdr)^{G_K} \\
\quad D_{\HT, K}(V) = &(V\otimes_\qp \obht)^{G_K}.
\end{eqnarray*}
\end{defn}

\begin{prop}\label{propFonModule}
\begin{enumerate}
\item There are canonical isomorphisms 
$$(\obdr^{\nc})^{G_K}=(\obdr)^{G_K}=K, \quad (\obht)^{G_K}=K.
$$

\item The natural injection $D_\dR^\nc(V) \into D_{\dR}(V)$ induced by the injection $\obdr^\nc \into \obdr$  is an isomorphism of $K$-vector spaces.

\item $D_{\dR, K}^\nc(V)$ , $D_{\dR, K}(V)$, $D_{\HT, K}(V)$ are $K$-vector space of dimension $\leq \dim_\Qp V$. 
\end{enumerate}
\end{prop}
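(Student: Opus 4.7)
The plan is to prove (1) first, then deduce (3) from (1) via Fontaine's regularity criterion, and finally obtain (2) by comparing Hodge filtrations on $D_\dR^\nc(V)$ and $D_\dR(V)$. For (1), I begin with $(\obht)^{G_K}$. Using the explicit presentation $\obht \simeq C[t^{\pm 1}, \bar u_1, \ldots, \bar u_d]$ from Prop.~\ref{propucoor}, each graded piece $\gr^j \obdr$ is identified with $\coc(j)$, and $\coc \simeq C[V_1, \ldots, V_d]$. A direct computation of $g(V_i)$ from the $G_K$-action on the compatible system $\{t_i^{1/p^m}\}$ expresses this action in terms of $1$-cocycles $G_K \to \Zp(1)$; combined with Tate's classical results $H^0(G_K, C) = K$ and $H^0(G_K, C(j)) = 0$ for $j \neq 0$, an induction on the total degree in the $V_i$ gives $(\coc)^{G_K} = K$ and $(\coc(j))^{G_K} = 0$ for $j \neq 0$, hence $(\obht)^{G_K} = K$. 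To pass to $\obdr^\nc$ and $\obdr$, for any nonzero $G_K$-invariant $x$ the smallest index $i$ with $x \in \fil^i$ produces a nonzero invariant in $\gr^i \simeq \coc(i)$, so $i = 0$; writing $x \equiv k \pmod{\fil^1}$ with $k \in K$ and iterating, one obtains $x - k \in \bigcap_n \fil^n$. The intersection vanishes in $\obdr$ by the $t$-adic completeness of $\fil^0 \obdr$ and in $\obdr^\nc$ by the $t$-adic separatedness of $\fil^0 \obdr^\nc$ (which embeds into $\fil^0 \obdr$), so $x = k \in K$ in both cases.

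With (1) established, (3) will follow from the standard Fontaine regularity argument. Each of $\obdr^\nc$, $\obdr$, and $\obht$ is an integral domain: $\obdr^{+,\nc} \simeq \bdrplus[[u_1, \ldots, u_d]]$ is a domain, localization and $t$-adic completion preserve this, and $\obht \simeq C[t^{\pm 1}, \bar u_1, \ldots, \bar u_d]$ is a polynomial ring over $C$. Combined with the invariants computation in (1), each ring satisfies Fontaine's regularity condition, so the natural map $D_B(V) \otimes_K B \hookrightarrow V \otimes_\qp B$ is injective, and therefore $\dim_K D_B(V) \leq \dim_\qp V$ for each of the three rings.

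For (2), I equip $D_\dR^\nc(V) \subset D_\dR(V)$ with the induced filtrations $\fil^i D(V) := D(V) \cap (V \otimes \fil^i B)$. The identification $\gr^i \obdr^\nc \simeq \gr^i \obdr$ (noted just before Notation~\ref{notaVi}) implies that both $\gr^i D_\dR^\nc(V)$ and $\gr^i D_\dR(V)$ embed, compatibly, into the common $K$-vector space $(V \otimes \gr^i \obdr)^{G_K}$. By (3), both filtrations are bounded and $K$-finite, so a standard d\'evissage reduces the equality to matching of graded pieces, which in turn follows from the lifting property of $G_K$-invariants along $\fil^{i+1} \obdr^\nc \subset \fil^i \obdr^\nc$ obtained from the same cocycle analysis as in (1).

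The main obstacle is the invariance computation over $\obdr^\nc$ in Part~(1): since the filtration on $\obdr^\nc$ is not $t$-adically complete (cf.~Rem.~\ref{remcompleteobdr}), one must establish $t$-adic separatedness by a direct argument, using Brinon's explicit power-series description of $\obdr^\nc$ rather than relying on completeness. The related surjectivity of $G_K$-invariants along the filtration needed in Part~(2) likewise depends on this finer control of $\obdr^\nc$.
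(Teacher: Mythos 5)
Your proposal takes a substantially different route from the paper. The paper simply cites Brinon (\cite[Prop.~2.16, 3.22, 3.35]{Bri06}) for parts (1) and (3), and then for part (2) uses a short truncation trick: by \cite[Prop.~4.19]{Bri06} the filtration on $D_\dR^\nc(V)$ is exhaustive and separated, hence $D_\dR^\nc(V)$ is computed by a \emph{finite} truncation $\obdr^{\nc,[c,d]}$, on which the two rings coincide ($\obdr^{\nc,[c,d]}=\obdr^{[c,d]}$), and an inductive argument propagates this to $D_\dR(V)$. You instead try to re-derive everything from scratch: a cocycle computation for (1), Fontaine's regularity criterion for (3), and a graded-piece d\'evissage for (2). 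That is a legitimate alternative strategy, but it commits you to reproducing a nontrivial portion of \cite{Bri06}, and as written there are real gaps.

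The most serious issue is your deduction of (3). Fontaine's regularity criterion requires \emph{three} things: that $B$ is a domain, that $B^{G_K}=(\operatorname{Frac}B)^{G_K}$, and crucially that any nonzero $b\in B$ spanning a $G_K$-stable $\qp$-line is a unit in $B$. You only establish $B^{G_K}=K$ and that $B$ is a domain, then assert ``each ring satisfies Fontaine's regularity condition.'' The invertibility-of-$G_K$-stable-lines condition does not follow from the invariants computation alone; it is a separate statement (for the perfect-residue-field $\bdr$ it is classical, but you must reprove it for $\obdr^{\nc}$, $\obdr$, $\obht$). Brinon verifies exactly this in \cite[Prop.~3.22]{Bri06}, which is what the paper is citing. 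Second, your argument for (2) hinges on a ``lifting property of $G_K$-invariants along $\fil^{i+1}\obdr^\nc\subset\fil^i\obdr^\nc$'' asserted to follow ``from the same cocycle analysis as in (1).'' But matching graded pieces requires a surjectivity statement, i.e.\ a vanishing of an $H^1$-type obstruction, which is not a consequence of computing $H^0$. The paper sidesteps this entirely by working with finite truncations where the two rings are literally equal. You do correctly flag the subtlety that $\Fil^0\obdr^\nc$ must be shown $t$-adically separated for the map $\obdr^\nc\to\obdr$ to be injective; that concern is well placed, and resolvable via the explicit power-series description, but the two gaps above are the substantive ones.
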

\begin{proof}
The statements concerning $\obdr^\nc$  are proved in \cite[Prop. 2.16, Prop. 3.22]{Bri06},  the statements concerning  $\obht$ are mentioned in the proof of \cite[Prop. 3.35]{Bri06}. To finish the proof, it would suffice to show  $D_\dR^\nc(V) \into D_{\dR}(V)$ is an isomorphism (note that the case for $V=\qp$   implies $(\obdr^{\nc})^{G_K}=(\obdr)^{G_K}$).

Now, \cite[Prop. 3.22]{Bri06} \emph{already} shows that $D_\dR^\nc(V)$ is a finite dimensional $K$-vector space, and \cite[Prop. 4.19]{Bri06} \emph{already} shows that the filtration on $D_\dR^\nc(V)$, namely:
$$\fil^i D_\dR^\nc(V): =(V\otimes_\qp \fil^i \obdr^\nc)^{G_K} , \forall i \in \mathbb Z  $$
is exhaustive and separated. Namely, there exists some \emph{finite} numbers $c\leq d$ such that 
\begin{equation*}
D_\dR^\nc(V) = (V\otimes_\qp   \obdr^{\nc, [a,b]})^{G_K}, \forall -\infty \leq a \leq c \leq d \leq b \leq  +\infty. 
\end{equation*}
Note that since $c, d$ are finite, we have 
$$(V\otimes_\qp   \obdr^{\nc, [c, d]})^{G_K} = (V\otimes_\qp   \obdr^{ [c, d]})^{G_K}.$$
An obvious inductive argument then shows 
$$D_\dR (V) = (V\otimes_\qp   \obdr^{  [a,b]})^{G_K}, \forall -\infty \leq a \leq c \leq d \leq b \leq  +\infty. $$
 \end{proof}

\begin{remark}\label{rem223}
The isomorphism  $D_\dR^\nc(V) \simeq D_{\dR}(V)$  shows that both $\obdr^\nc$ and $ \obdr$ can be used to construct the de Rham Fontaine module theory.
\end{remark}

\begin{defn}  Let $V \in \rep_\gk(\qp)$.  
\begin{enumerate}
\item $V$ is a called  de Rham if $D_{\dR, K}^\nc(V)$ (equivalently $ D_{\dR, K}(V)$) is of $K$-dimension $\dim_\Qp V$. 
\item $V$ is called Hodge--Tate if $D_{\HT, K}(V)$ is  of $K$-dimension $\dim_\Qp V$.
\end{enumerate}
 \end{defn}

%\begin{enumerate} 
%\item Say $V$ is potentially  de Rham (resp. potentially Hodge--Tate ) if there exists a finite field extension $K'/K$  such that $V|_{G_K'}$ is a de Rham (resp. Hodge--Tate ) representation of $G_{K'}$.    \end{enumerate}

%\begin{thm} \label{thmmorita} \cite{MoritadR, Moritacrys}  Let $V$ be a $p$-adic representation of $G_{K}$. 
%\begin{enumerate} \item $V$ is  de Rham $\Longleftrightarrow$ $V$ is  potentially de Rham    \item $V$ is  Hodge--Tate $\Longleftrightarrow$ $V$ is  potentially Hodge--Tate   \end{enumerate}
%\end{thm} 
%\begin{proof} Items (1) and (2) are proved in \cite{MoritadR} \end{proof}

\subsection{Connection maps}
\begin{notation}
  Let
$$ \wh{\Omega}_\ok:= \projlim_{n >0} \Omega^1_{\ok/\mathbb Z}/p^n\Omega^1_{\ok/\mathbb Z}$$
be the $p$-adically continuous K\"ahler differentials, and let 
$$\wh\Omega_{K}=\wh{\Omega}_\ok\otimes_\ok K.$$ 
$\wh{\Omega}_\ok$ is a free $\ok$-module with a set of basis $\{d\log(t_i)\}_{1\leq i \leq d}$, where $t_1, \cdots, t_d \in \ok$ are introduced in Notation \ref{notatitflat} as a lift of a $p$-basis of $k_K$.
Let 
$$d: \ok \to \wh{\Omega}_{\ok}$$ 
be the induced differential map (extending the canonical differential map $d: \ok \to {\Omega}_{\ok/\mathbb Z}$.
\end{notation}

We now construct  connections on $\obdr^\nc$ and $\obdr$. In \cite[\S 2.2]{Bri06}, a connection is defined in an \emph{explicit} way that seemingly to depend on various choices (such as $u_i$'s). Here, we follow the presentation of \cite[\S 6]{Sch13} to see that   the connection is indeed a \emph{unique} extension of that on $K$, and in particular is an \emph{intrinsic} notion.

\begin{construction} \label{drnabla}
Recall we have $d: \ok \to \wh{\Omega}_{\ok}$. It extends $W(\ocflat)$-linearly  to a \emph{unique} (continuous) connection 
$$\nabla:  W(\ocflat)\otimes_{\mathbb Z}\ok \to W(\ocflat)\otimes_{\mathbb Z}  \wh{\Omega}_{\ok}.$$
Via \eqref{eqainfocok}, it then extends $W(\ocflat)$-linearly (but not $\Ainf(\O_C/\O_{K})$-linearly, since $\nabla$ does not kill $\theta_K^{-1}(p\O_C)$) to
$$\nabla:  \Ainf(\O_C/\O_{K}) \to \Ainf(\O_C/\O_{K}) \otimes_\ok \wh{\Omega}_{\ok},$$
and hence $\bdrplus$-linearly (extending the $W(\ocflat)$-linearity above, via $\ker \theta$-completion) to 
$$\nabla: \obdrplus \to \obdrplus \otimes_K \wh{\Omega}_{K},$$
and finally $\bdr$-linearly (which is possible since $t  \in \bdrplus$) to 
$$\nabla: \obdr^{\nc} \to \obdr^{\nc} \otimes_K \wh{\Omega}_{K}.$$
One can easily check that $\nabla(u_i) = dt_i$, hence indeed this \emph{intrinsically} defined $\nabla$ is exactly the same as the one explicitly defined in \cite{Bri06}. Namely, using the expression $\obdrplus =\bdrplus[[u_1, \cdots, u_d]]$, and for each $i$, let $N_i$ be the unique $\bdrplus$-derivation of $\obdrplus$ such that 
$$N_i(u_j)=\delta_{i, j}t_j$$ 
where $\delta_{i, j}$ is the Kronecker symbol, then $\nabla$ has the following explicit formula:
\begin{equation}
\nabla:\obdr^{\nc} \to \obdr^{\nc}\otimes_K \wh{\Omega}_K, \quad x \mapsto \sum_{i=1}^d N_i(x)\otimes d \log(t_i).
\end{equation} 
We list some basic properties of $\nabla$.
\begin{enumerate}
\item $\nabla$ is integrable (since $N_i$'s commute with each other).
\item We have $(\obdr^{\nc})^{\nabla=0} =\bdr.$ 
\item By \cite[Prop. 2.23]{Bri06}, $\nabla$ satisfies Griffiths transversality, i.e., 
$$\nabla(\Fil^r \obdr^{\nc}) \subset \Fil^{r-1}  \obdr^{\nc} \otimes_K \wh{\Omega}_K.$$
\item By \cite[Prop. 2.24]{Bri06}, $\nabla$ commutes with $G_K$-action.
\item By \cite[Prop. 2.25]{Bri06}, $\nabla|_K$ is precisely the   differential $d: K \to   \wh{\Omega}_K.$ (Note this is now a ``vacuous" statement, as we have just shown $\nabla$ comes from a unique extension of $d$ on $K$).
\end{enumerate}
\end{construction}

\begin{construction}\label{consnablaobdr}
Since $\nabla(t)=0$, the connection $\nabla$ on $\obdr^\nc$ continuously extends to  
$$\nabla:\obdr  \to \obdr \otimes_K \wh{\Omega}_K.$$  
Note $\nabla$ on $\obdr$ still satisfies all the listed  properties above, namely:
\begin{enumerate}
\item $\nabla$ is integrable.
\item   $(\obdr )^{\nabla=0} =\bdr.$ 
\item $\nabla$ satisfies Griffiths transversality, i.e., 
$$\nabla(\Fil^r \obdr ) \subset \Fil^{r-1}  \obdr  \otimes_K \wh{\Omega}_K.$$
\item $\nabla$ commutes with $G_K$-action.
\item   $\nabla|_K$ is precisely the   differential $d: K \to   \wh{\Omega}_K.$
\end{enumerate}
 \end{construction}

 \begin{construction}\label{notagrzero}
 Since $\nabla$ on $\obdr^\nc$ (and $\obdr$) satisfies Griffiths transversality, we can define the zero-th graded:
 $$\gr^0 \nabla: \coc \to \coc(-1)\otimes_K  \wh{\Omega}_K =\coc \otimes_K \wh{\Omega}_K(-1).$$
 Using the expression $\coc \simeq C[\frac{\bar{u}_1}{t}, \cdots, \frac{\bar{u}_d}{t}]$, one sees that $\gr^0 \nabla$ is $C$-linear, and 
 $$\gr^0 \nabla(\frac{\bar{u}_i}{t}) =t_i \otimes \frac{d\log t_i}{t}. $$
 More conveniently, using the expression $\coc  \simeq   C[V_1, \cdots, V_d]$, we have
 $$\gr^0 \nabla(V_i) = 1\otimes \frac{d\log t_i}{t}.$$
 (Let us note that $\nabla$ is \emph{not} $K$-linear, but $\gr^0 \nabla$ is $C$-linear hence $K$-linear.)
 \end{construction}

\section{Change of base fields} \label{secKL}
In this section, we give some examples of the continuous embedding $i: K\into L$, and we set up notations for the field $L$. In the end, we recall Morita's rigidity theorem for a special embedding.

\begin{notation}
\begin{enumerate}
\item Recall in Notation \ref{notakl}, we fixed a continuous field embedding $i: K\into L$. We also fixed $i: \overline K \into \overline L$ and $i: G_L \to G_K$.
%such that $i(\ok) \subset \mathcal{O}_L$ (where $L$ is another CDVF whose residue field has finite $p$-basis)

\item Throughout this paper, we use $\bbk$ to denote a mixed characteristic CDVF with perfect residue field ($\bbk$ may or may not be related with $K$).
\end{enumerate} 
\end{notation}

\subsection{Embedding of fields}
We introduce three examples of $K \into L$ to orient the readers.

\begin{example}
Let $\qp \into L=W(\fp(x^{\frac{1}{p^\infty}}))[1/p]$ be the inclusion embedding. Note that Thm. \ref{thmintrorigi} implies that a representation $V \in \rep_{G_\qp}(\qp)$  is Hodge--Tate resp. de Rham if and only if $V|_{G_L}$ is so. Note although the two fields both have $p$ as uniformizers, the extension of their residue fields is not algebraic (hence in particular $L/\qp$ is not an unramified extension). Thus any induced inclusion $C_{\qp} \into C_L$ is not an isomorphism, and hence the mentioned  Hodge--Tate resp. de Rham rigidity does not follow from the ``usually known argument".  
\end{example}

\begin{example}
\label{notaspecialbbk}  
In Notation \ref{notatitflat}, for each $m \geq 0$, we fixed the elements $t_i^{\frac{1}{p^m}} \in \overline K$. Let $K^{(\pf)}/K$ be the algebraic extension by adjoining all these $t_i^{\frac{1}{p^m}}$, namely:
$$ K^{(\pf)} =\cup_{m \geq 0} K(t_1^{\frac{1}{p^m}}, \cdots, t_d^{\frac{1}{p^m}}).  $$
 Let $\kpf$ be its $p$-adic completion.   Let 
$$i: K \into\kpf$$ 
be the   inclusion embedding.
\end{example}

\begin{rem}\label{remAdvantagemorita}
Let us summarize some advantageous features of the above embedding, which are crucially exploited in \cite{Bri03} and then used in \cite{MoritadR} to prove Thm. \ref{thmmorita}.
\begin{enumerate}
\item We introduced $u_i, V_i$ in Notations \ref{notatitflat} and \ref{notaVi};  $u_i$ and $tV_i$ are \emph{fixed} by $G_\kpf$.

\item The algebraic extension $K \to  K^{(\pf)}$ has an obvious Galois closure by adjoining all $p$-power roots of unity whose Galois group is an \emph{explicit}  $p$-adic Lie group, cf. Notation \ref{notaGammagp}.
\end{enumerate}
\end{rem}

\begin{example} \label{examBT}
(This example is key for the study of crystalline resp. semi-stable representations, as well as integral $p$-adic Hodge theory in the imperfect residue field case). Now suppose the residue field $k_K$ is not perfect (otherwise, the following discussion is vacuous). The readers can consult \cite[\S 1]{Bri06} and \cite[\S 2]{BT08} for more details.
\begin{itemize}

\item Let $\overline{k_K}$ be the residue field of $\overline{K}$, which is an algebraic closure of $k_K$.
 Let $\mathbf{k}$ be the radical closure of $k_K \subset \overline{k_K}$; note that $\mathbf{k}$ is a perfect field. Let $ W(\mathbf{k})$ be the ring of Witt vectors for $\mathbf k$, recall there is a \emph{unique} (lifted) Frobenius morphism   on $ W(\mathbf{k})$.

\item Let  $C(k_K)$ be a Cohen ring of $k_K$, and \emph{fix} an embedding $C(k_K)\into \ok$  (which is not unique) and denote the image as $\oko \subset \ok$, let $K_0=\oko[1/p]$.

\item   \emph{Fix} a Frobenius  lifting $\sigma: \oko \to \oko$ (such Frobenius liftings are \emph{not} unique). With $\sigma$ fixed, there then exists a \emph{unique} continuous \emph{Frobenius-equivariant}  ring embedding
$$i_\sigma: \oko  \into W(\mathbf{k})$$ 
whose reduction modulo $p$ is the inclusion $k_K \into \mathbf{k}$. 
See \cite[\S 1, paragraph 2]{Bri06} for full details.

 %Hence we have an embedding $i_\sigma: \oko \into W(\mathbf{k}). $

\item Let $\bbk_0 =W(\mathbf{k})[1/p]$, and \emph{fix} $\overline{\bbk_0}$ an algebraic closure of $\bbk_0$. 
The map $i_\sigma$   extends (non-uniquely) to some embedding 
$$i_\sigma: \overline{K} \into \overline{\bbk}.$$ 

\item Finally, define $\bbk_\sigma:=i_\sigma(K)\bbk_0$. Then we obtain an embedding 
$$i_\sigma: K \into \bbk_\sigma.$$
\end{itemize}
Note that $\bbk_\sigma$ is always isomorphic to $\kpf$ in Notation \ref{notaspecialbbk} as they are both the ``smallest" CDVF containing $K$ whose residue field is the perfection of $k_K$. However, it is clear the embeddings $i_\sigma$ can be very different from the inclusion $K \into \kpf$.
\end{example}

\begin{rem}
Clearly, the features mentioned in 
 Rem. \ref{remAdvantagemorita} in general do not apply in Example \ref{examBT}.
\end{rem}

\subsection{Fontaine rings for different base fields}
\begin{notation}
Recall in Notation \ref{notakl}, we fixed a continuous embedding $i: K \into L$.
We also fixed $i: \overline K \into \overline L$ and $i: G_L \to G_K$.
Let $C_L$ be the $p$-adic completion of $\overline L$, then we have an induced embedding $i: C \into C_L$ that is compatible with the Galois actions via $i: G_L \to G_K$.
As $L$ is also a CDVF whose residue field has finite $p$-basis, we can carry out all the constructions in Section \ref{secfontaine}. Indeed, we can first define the rings   $\mathcal{O}_{C_L}^\flat$, $W(\mathcal{O}_{C_L}^\flat)$; note that $i$ induces continuous homomorphisms $\ocflat \to \mathcal{O}_{C_L}^\flat$, $W(\ocflat) \to W(\mathcal{O}_{C_L}^\flat)$ that are compatible with Galois actions via $i: G_K \to G_L$.
Note that since $i: K \into L$ is continuous, there exists some $s \in \mathbb Z$ such that $i(\ok) \subset p^s\calO_L$. This implies that 
$K \into L$ can be extended to a  continuous  homomorphism
$$i: \Ainf(\O_C/\O_K) \otimes_{\Zp} \Qp \to \Ainf(\O_{C_L}/\O_L) \otimes_{\Zp} \Qp$$
and then to 
$$i: \obdrplus \to \calO \mathbf{B}_{\dR, L}^{+}.$$ 
There are similar continuous homomorphisms to $\O\mathbf{B}_{\dR, L}, \O \mathbf{B}_{\HT, L}, \O C_L$ etc., that are always compatible with structures such as Galois actions, filtrations, connections (whenever applicable).
\end{notation}

%\begin{theorem} Let $V \in \rep_{G_K}(\qp)$, and suppose $V$ is $V$ is Hodge--Tate (resp. de Rham). Then $V|_{G_L}$ (considered as a representation of $G_L$) is also so. \end{theorem}
%\begin{proof} Note that the construction of these rings are all ``functorial" with respect to $K \into L$ (and the fixed $\overline{K} \into \overline{L}$) in the sense that there are naturally induced maps $i:  W(\mathcal{O}_{C}^\flat) \to W(\mathcal{O}_{C_L}^\flat)$, $i: \mathcal{O}\mathbf{B}_{\dR}^+ \to \mathcal{O}\mathbf{B}_{\dR, L}^+$ etc. that are always compatible with  $i: G_K \to G_L$. Note in addtion, all these ring maps are $\zp$-linear.

%Note that $V\otimes_\qp \bdr \otimes_{i, \bdr} \bbdr =V\otimes_\qp \bbdr$ (since $i$ is $\zp$-linear); clearly its $G_\bbk$ fixed points contain $D_\dR(V)\otimes_{i, K} \bbk$ (since $i$ is $G_\bbk$-equivariant). 
% \end{proof}

\begin{notation}\label{notabbkperf}
For $\bbk$ a  CDVF with perfect residue field ($\bbk$ may or may not be related with $K$), we fix an algebraic closure $\overline{\bbk}$ and its $p$-adic completion $\bbc$. We then use notations such as $$\bbdr^+,\quad \bbdr,\quad \bbht $$ to denote the ``usual" Fontaine  rings (in the perfect residue field case).
\end{notation}

\subsection{Morita's  results} 
 The following theorem of Morita is a special case of our Thm. \ref{thmintrorigi}(2).
 Morita's argument relies on the features mentioned in Rem. \ref{remAdvantagemorita}, and cannot be adapted to the general case.
 
\begin{thm} \label{thmmorita} \cite{MoritadR}  Let $V\in \rep_\gk(\qp)$.  Let $K \into \kpf$ be the embedding in Example \ref{notaspecialbbk}.
Then $V$ is Hodge--Tate (resp. de Rham) if and only if $V|_{G_{\kpf}}$ is  so.
\end{thm}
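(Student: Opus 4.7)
The plan is to exploit the two special features of the embedding $K \into \kpf$ noted in Rem. \ref{remAdvantagemorita}: the coordinates $u_i$ and $V_i$ from Notations \ref{notatitflat} and \ref{notaVi} are fixed by $G_\kpf$, and the Galois group of $K^{(\pf)}/K$ together with its cyclotomic enlargement is an explicit $p$-adic Lie group. These allow an explicit two-stage Galois descent: first from $G_K$ to $G_\kpf$, then relating $G_\kpf$-invariants to the usual perfect-residue-field Sen theory. Brinon's Sen theory in section \ref{subsecBrinonSen} provides the technical backbone.

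For the Hodge-Tate case, Prop. \ref{propucoor}(2) and Notation \ref{notaVi} give an isomorphism
$$\obht \simeq \bbht_\kpf[V_1,\ldots,V_d],$$
where $\bbht_\kpf = C[t^{\pm 1}]$ is the classical Hodge-Tate ring over the perfectoid field $C$ (naturally identified with the completion of $\overline{\kpf}$). Since the $V_i$ are $G_\kpf$-fixed, separation of variables yields
$$(V\otimes_\qp \obht)^{G_\kpf} \simeq D_{\HT,\kpf}(V|_{G_\kpf})[V_1,\ldots,V_d].$$
If $V|_{G_\kpf}$ is Hodge-Tate of dimension $n$, the right-hand side is a polynomial ring over a $\kpf$-vector space of dimension $n$. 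One then takes further $G_K/G_\kpf$-invariants: the explicit $p$-adic Lie structure of this Galois group makes a Tate-style normalized-trace computation available for the (non-trivial) action on the $V_i$'s, and yields $\dim_K D_{\HT,K}(V) = n$. The converse direction amounts to injectivity of the natural base-change map $D_{\HT,K}(V)\otimes_K \kpf \hookrightarrow D_{\HT,\kpf}(V|_{G_\kpf})$, which one checks directly by Galois descent along $(\obht_\kpf)^{G_K} \cap \obht = K$.

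For the de Rham case, one replaces Brinon's Sen theory by Andreatta-Brinon's Sen-Fontaine theory (section \ref{subsecAB10}) and uses the identification $\obdr^{+,\nc} \simeq \bdrplus[[tV_1,\ldots,tV_d]]$ from Prop. \ref{propucoor}(1). The same separation-of-variables argument applies, but one must now d\'evissage through the filtration $\Fil^i\obdr^{\nc}$, reducing each step to a Hodge-Tate-level computation on the graded pieces $\gr^i\obdr^{\nc} \simeq \gr^i\obht$. The main obstacle is the explicit analysis of the $G_K/G_\kpf$-action on the polynomial ring in the $V_i$'s: one must verify by an explicit cocycle/normalized-trace computation that the $G_K$-invariants still have the correct $K$-dimension, uniformly in the filtration degree. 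This computation is the heart of Morita's method and the precise reason it does not extend beyond $K\into \kpf$: for a general embedding $i: K \into L$, neither the Galois quotient $G_K/G_L$ nor a $G_L$-invariant set of coordinates on $\obht$ is available, so the separation-of-variables/descent strategy collapses — forcing the conceptual approach via Simpson and Riemann-Hilbert correspondences developed in the body of this paper.
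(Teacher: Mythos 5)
The paper states Thm.~\ref{thmmorita} as a recall of Morita's result \cite{MoritadR}; it is cited, not proved in-line, so there is no ``paper's own proof'' to compare against in the narrow sense. (The paper does re-prove this theorem, but only as a special case of Thm.~\ref{thmintrorigi}, by an entirely different conceptual route via the Simpson and Riemann--Hilbert correspondences of \S\ref{secSimpson} and \S\ref{secRH}.) Your sketch is therefore best read as a reconstruction of Morita's explicit method, and as such it captures the right skeleton: exploit the $G_\kpf$-fixed coordinates to separate variables, then descend. However there are two concrete inaccuracies that need to be fixed for the sketch to make sense.

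First, you assert ``the coordinates $u_i$ and $V_i$ \ldots are fixed by $G_\kpf$'' and then use this for separation of variables. But Rem.~\ref{remAdvantagemorita}(1) says precisely that $u_i$ and $tV_i$ are $G_\kpf$-fixed, \emph{not} $V_i$: since $V_i=\tfrac{1}{t}\log([t_i^\flat]/t_i)$ and $G_\kpf$ fixes $[t_i^\flat]$ and $t_i$ while acting on $t$ by $\chi$, one has $g(V_i)=\chi(g)^{-1}V_i$. The separation-of-variables step for $\obht$ should be run with $\obht\simeq\bbht_\kpf[\bar u_1,\ldots,\bar u_d]$ (with $\bar u_i$ genuinely $G_\kpf$-fixed and of grade $1$), or equivalently with $tV_i$; otherwise the grading on $D_{\HT,\kpf}(V|_{G_\kpf})$ gets shifted. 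Second, ``one then takes further $G_K/G_\kpf$-invariants'' is ill-posed: $K^{(\pf)}/K$ is not Galois (and $G_\kpf$ is not normal in $G_K$), so there is no quotient group to work with. The correct descent goes through the normal subgroup $\Ht=G_{K^{(\pf)}_\infty}\subset G_K$ and then the $p$-adic Lie group $\Gamma=G_K/\Ht\simeq\Gamma_{K^{(\pf)}}\ltimes\Gamma_\geom$ of Notation~\ref{notaGammagp}; this is exactly the structure that makes Brinon's Sen theory (\S\ref{subsecBrinonSen}) and Andreatta--Brinon's Sen--Fontaine theory (\S\ref{subsecAB10}) applicable, and it is what Morita's differential-module computations actually run on. With these two repairs the sketch is a plausible outline of Morita's approach; your closing remark about why this method cannot handle a general $i:K\into L$ agrees with the paper's stated motivation for replacing it with the coordinate-free Simpson/Riemann--Hilbert machinery.
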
 

\begin{remark}
In \cite{Moritacrys}, Morita also proves that $V$ is  potentially semi-stable (resp. potentially crystalline) if and only if  $V|_{G_{\kpf}}$ is  so. This, together with Thm. \ref{thmmorita}, allows him to obtain  \emph{$p$-adic local monodromy theorem} in the imperfect residue field case: namely, $V$ is de Rham if and only it is  potentially semi-stable. Alternatively, another proof which works even when $[k:k^p]=+\infty$ is supplied by Ohkubo \cite{Ohk13}.
\end{remark}

\section{Sen theory and Tate--Sen decompletion} \label{secTateSen}
In this section, we review   Sen theory, first in the classical perfect residue field case, then in the imperfect residue field case. We also carry out a Tate--Sen decompletion in the imperfect residue field case, and use it to carry out some cohomological computations for later use.

\subsection{Sen theory in the perfect residue field case} \label{subsec41}
The following theorem is proved in \cite{Sen81}, with some key ingredients from \cite{Tat67}. We also refer to \cite{Fon04} for a very clear exposition.  Let $\bbk$ be a CDVF with perfect residue field as in Notation \ref{notabbkperf}. Let $\bbkinfty$ be the extension by adjoining all $p$-power roots of unity, and let $\hatbbkinfty$ be the $p$-adic completion. Let
$G_{\bbkinfty}: =\gal(\overline{\bbK}/\bbkinfty),  \Gamma_\bbK :=\gal(\bbkinfty/\bbk)$.

\begin{prop}
The rule 
$$\rep_{G_\bbk}(\bbc) \ni W \mapsto \calH_\bbk(W):= W^{G_{\bbkinfty}}$$
 induces a tensor equivalence of categories
$$\calH_\bbk: \rep_{G_\bbk}(\bbc) \xrightarrow{\simeq}  \rep_{\Gamma_\bbk}(\hatbbkinfty).$$
The base change map $$\rep_{\Gamma_\bbk}(\bbkinfty)                                                     \ni M \mapsto M\otimes_\bbkinfty \hatbbkinfty$$
 induces a tensor equivalence of categories
$$ \rep_{\Gamma_\bbk}(\bbkinfty)                                                      \xrightarrow{\simeq}   \rep_{\Gamma_\bbk}(\hatbbkinfty). $$
These equivalences then induce a tensor equivalence, denoted as $H_\bbk$:
\[\mathrm{H}_\bbk: \rep_{G_\bbk}(\bbc) \xrightarrow{\simeq}   \rep_{\Gamma_\bbk}(\bbkinfty).                                                 \]
\end{prop}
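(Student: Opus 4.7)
The plan is to deploy the two classical tools---Ax--Sen--Tate for invariants and Tate's normalized traces for decompletion---packaged in the Berger--Colmez formalism referenced elsewhere in the paper. I will treat the decompletion equivalence and the Sen equivalence as essentially parallel inputs, then build $\mathrm{H}_\bbk$ as their composite; tensor compatibility will be formal at every stage since invariants, base change, and tensor products all commute with the tensor products of representations.

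First I would prove the decompletion equivalence $\rep_{\Gamma_\bbk}(\bbkinfty) \simeq \rep_{\Gamma_\bbk}(\hatbbkinfty)$. The essential tool is Tate's family of normalized traces: continuous $\bbk(\mu_{p^n})$-linear, $\Gamma_\bbk$-equivariant projections $R_n : \hatbbkinfty \to \bbk(\mu_{p^n})$ that approach the identity in operator norm on the subspace of locally analytic vectors (cf.~\S\ref{subsubLAV}). Given $M \in \rep_{\Gamma_\bbk}(\hatbbkinfty)$, I would pick a basis, apply $R_n$ coordinate-wise, and run a standard successive-approximation argument to upgrade the result to a genuine $\Gamma_\bbk$-stable $\bbkinfty$-lattice $M_0$ with
$$M_0 \otimes_{\bbkinfty} \hatbbkinfty \xrightarrow{\sim} M.$$
Full faithfulness follows from faithful flatness of the completion together with the ability to recover $M_0$ from $M$ as the submodule of vectors on which the $R_n$ converge uniformly against a chosen basis.

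Next I would analyse the Sen functor $\calH_\bbk$. Ax--Sen--Tate yields $\bbc^{G_{\bbkinfty}} = \hatbbkinfty$, so $W^{G_{\bbkinfty}}$ is automatically a $\hatbbkinfty$-module with a semi-linear $\Gamma_\bbk$-action. The crux is that $W^{G_{\bbkinfty}}$ is finite free of rank $\dim_{\bbc} W$ and that the canonical map
$$W^{G_{\bbkinfty}} \otimes_{\hatbbkinfty} \bbc \longrightarrow W$$
is an isomorphism. This is a Hilbert--$90$ statement for $\GL_n$: every continuous cocycle $G_{\bbkinfty} \to \GL_n(\bbc)$ can be trivialised, which follows from the control of $H^1_{\mathrm{cont}}(G_{\bbkinfty}, \bbc)$ via Ax--Sen--Tate and a second set of normalised traces $\bbc \to \hatbbkinfty$. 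Combined with $\bbc^{G_{\bbkinfty}} = \hatbbkinfty$, this identifies $\Hom_{G_\bbk}(W, W') = \Hom_{\Gamma_\bbk}(\calH_\bbk W, \calH_\bbk W')$, so $\calH_\bbk$ is fully faithful. For essential surjectivity I would take $M \in \rep_{\Gamma_\bbk}(\hatbbkinfty)$, descend to $M_0 \in \rep_{\Gamma_\bbk}(\bbkinfty)$ via the previous step, and set $W := M_0 \otimes_{\bbkinfty} \bbc$ with the diagonal $G_\bbk$-action (so that $G_{\bbkinfty}$ acts trivially on the $M_0$-factor and via $\bbc$ on the right); then $W^{G_{\bbkinfty}} \simeq M_0 \otimes_{\bbkinfty} \hatbbkinfty \simeq M$. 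Composing the two equivalences defines $\mathrm{H}_\bbk$ and completes the triangle.

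The main obstacle is the decompletion step: the Berger--Colmez successive-approximation must be run uniformly across all matrix entries of a basis for the $\Gamma_\bbk$-action, and this depends on sharp operator-norm estimates for $\Id - R_n$ together with the existence of sufficiently many locally analytic vectors to start the iteration. Once decompletion is secured, the analysis of $\calH_\bbk$ is essentially formal, reducing to standard facts about continuous cocycles valued in $\bbc$ and to Ax--Sen--Tate.
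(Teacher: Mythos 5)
Your proposal is correct and takes essentially the same route the paper does: the paper does not reprove this proposition but cites Sen, Tate, and Fontaine (and records the key ingredients in Prop.~\ref{thmFon0411} and Cor.~\ref{thmalmostvanish}), and your outline---Tate's normalized traces for the decompletion step, Ax--Sen--Tate and vanishing of $H^1(G_{\bbkinfty},\GL_n(\bbc))$ for the Sen step, with the composite giving $\mathrm{H}_\bbk$---is exactly the classical argument those references carry out.
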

 
Let us gather some results used in the proof of above theorem.

%the equivalence of $\calH_\bbk$ (some results in the proof for $\mathrm{H}_\bbk$ will be recalled later).

\begin{prop}\label{thmFon0411}
\begin{enumerate}
\item $H^1(G_{\bbkinfty}, \GL_n(\bbC)) =0$ for  any $n\geq 1$.

  \item Any $W \in \rep_{G_{\bbkinfty}}(\bbC)$ is trivial.

\item Let $W \in \rep_{G_{\bbkinfty}}(\bbC)$, then $H^1(G_{\bbkinfty}, W)=0$. (As a special case, $H^1(G_{\bbkinfty}, \bbC)=0$).
\end{enumerate} 
\end{prop}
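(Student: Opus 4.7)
These are classical results of Tate--Sen in the perfect residue field setting. My plan is to establish (1) first, deduce (2) as an immediate reformulation, and derive (3) using (2) together with the scalar vanishing $H^1(G_{\bbkinfty}, \bbC) = 0$.

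For (1) I would follow Sen's original approximation-plus-fixed-point argument. Given a continuous 1-cocycle $\sigma \mapsto U_\sigma \in \GL_n(\bbC)$, continuity supplies an open subgroup $H \subseteq G_{\bbkinfty}$ on which $U_\sigma$ lies in a small neighbourhood of the identity. Setting $L := \overline{\bbk}^{H}$ so that $\bbC^{H} = \widehat{L}$ by Ax--Sen--Tate, Tate's normalized traces $R_n : \widehat{L} \to L_n$ on the layers of the cyclotomic tower above $L$ provide continuous projectors with quantitative control of the complement. A Banach contraction argument on $\mathrm{Id} + \varepsilon \cdot \Mat_n(\bbC)$ then produces $M$ with $U_\sigma = M \sigma(M)^{-1}$ for $\sigma \in H$, trivializing the cocycle on $H$ and leaving a cocycle on the finite quotient $G_{\bbkinfty}/H$ with values in $\GL_n(\widehat{L})$. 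This finite cocycle is trivialized by standard Hilbert 90 descent, exploiting the perfectoid nature of $\bbC/\widehat{L}$ and the fact that cohomology of $\GL_n$ vanishes for finite Galois extensions of perfectoid fields.

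Statement (2) is then an immediate reformulation: picking any $\bbC$-basis of $W \in \rep_{G_{\bbkinfty}}(\bbC)$ encodes its isomorphism class as an element of $H^1(G_{\bbkinfty}, \GL_n(\bbC))$, which vanishes by (1); hence $W$ admits a $G_{\bbkinfty}$-fixed $\bbC$-basis and $W \simeq \bbC^n$. For (3), using (2) we may reduce to the case $W = \bbC^n$ with trivial semi-linear action, so that $H^1(G_{\bbkinfty}, W) = H^1(G_{\bbkinfty}, \bbC)^n$, reducing to the scalar vanishing. This scalar vanishing is proved by the same decompletion philosophy: continuity approximates a cocycle $c : G_{\bbkinfty} \to \bbC$ by one valued in $\widehat{L}$ for a finite sub-extension, and Tate's normalized trace furnishes an explicit coboundary trivialization (the kernel of the normalized trace is precisely where the operator $\sigma - 1$ is invertible for appropriate $\sigma$). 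The principal obstacle is the contraction step for (1): one must choose $H$ small enough that the decay rates along the normalized-trace tower dominate the size of $U_\sigma - \mathrm{Id}$, and tracking these estimates is the one non-formal piece of bookkeeping in the argument.
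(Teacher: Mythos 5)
The paper itself gives no argument here; it simply cites \cite[Thm.~1.1, Cor.~2.3]{Fon04}. Your overall architecture for (1)---shrink to an open subgroup $H$ where the cocycle is near the identity, trivialize there by a contraction, then handle the finite quotient by Hilbert~90---is indeed the shape of Sen's argument, and your reductions of (2) from (1) and of (3) to the scalar case are correct. But the key technical input you invoke for the contraction step is the wrong tool, and this is not a cosmetic slip.

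You write that, with $L := \overline{\bbk}^{H}$, ``Tate's normalized traces $R_n : \widehat{L} \to L_n$ on the layers of the cyclotomic tower above $L$'' supply the projectors that drive the contraction. But $H$ is open in $G_{\bbkinfty}$, so $L$ is a finite extension of $\bbkinfty$ and already contains all $p$-power roots of unity: there is no non-trivial cyclotomic tower above $L$, and these $R_n$ do not exist. Normalized traces belong to the other half of Tate--Sen theory, namely the \emph{decompletion} step computing $H^i(\gammabbk, \hatbbkinfty)$, where one projects $\hatbbkinfty$ onto the finite layers $\bbk_n \subset \bbkinfty$; that machinery (and the companion statement ``$\sigma - 1$ is invertible on the kernel of $R_n$'') has no bearing on $G_{\bbkinfty}$-cohomology with coefficients in $\bbC$. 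What actually powers the contraction on $H$ is Tate's deep-ramification / almost-\'etale theorem: since $\hatbbkinfty$ (and hence $\widehat{L}$) is perfectoid, $\mathcal{O}_{\bbC}/\mathcal{O}_{\widehat{L}}$ is almost \'etale, whence $H^q(H, \mathcal{O}_{\bbC})$ is killed by the maximal ideal for $q \geq 1$; successive approximation modulo $p$ then trivializes a cocycle valued in $\Id + p^c\Mat_n(\mathcal{O}_{\bbC})$, and inverting $p$ gives the vanishing. The same substitution is needed for the scalar statement $H^1(G_{\bbkinfty}, \bbC) = 0$ at the end of your sketch. With that one ingredient replaced, the rest of your outline---in particular the Hilbert~90 step for $\Gal(L/\bbkinfty) = \Gal(\widehat{L}/\hatbbkinfty)$, which is just ordinary Hilbert~90 for a finite Galois extension and needs no perfectoid input---is sound.
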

\begin{proof}
See \cite[Thm. 1.1, Cor 2.3]{Fon04}.
\end{proof}

\begin{cor}\label{thmalmostvanish}
\begin{enumerate}
\item $H^i(G_{\bbkinfty}, \bbc) =0, \forall i \geq 1$.
\item Let   $W \in \rep_{G_{\bbkinfty}}(\bbC)$, then $H^i(G_{\bbkinfty}, W) =0, \forall i \geq 1$.
\end{enumerate}
\end{cor}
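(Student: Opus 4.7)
\textbf{Proof plan for Corollary \ref{thmalmostvanish}.} The natural strategy is to reduce (2) to (1) via the triviality established in Proposition \ref{thmFon0411}(2), and then to upgrade the $i=1$ vanishing of Proposition \ref{thmFon0411}(3) to all $i\geq 1$.

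For part (2), I would use Proposition \ref{thmFon0411}(2) to fix a $G_{\bbkinfty}$-equivariant isomorphism $W \simeq \bbC^{n}$ with $n = \dim_{\bbC} W$, where $G_{\bbkinfty}$ acts trivially on the right-hand side. Since continuous cohomology commutes with finite direct sums, this immediately gives $H^i(G_{\bbkinfty}, W) \simeq H^i(G_{\bbkinfty}, \bbC)^n$, so (2) reduces to (1).

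For part (1), the case $i = 1$ is Proposition \ref{thmFon0411}(3) applied to $W = \bbC$. For $i \geq 2$, the natural input is Faltings' almost purity theorem: since $\hatbbkinfty$ is a perfectoid field, the morphism of integral perfectoid rings $\mathcal{O}_{\hatbbkinfty} \to \obbc$ is almost \'etale, hence $H^i(G_{\bbkinfty}, \obbc)$ is annihilated by the maximal ideal of $\mathcal{O}_{\hatbbkinfty}$ for every $i \geq 1$. Inverting $p$ removes the ``almost'' and yields the asserted vanishing $H^i(G_{\bbkinfty}, \bbC) = 0$. Equivalently, one may invoke that $\bbkinfty/\bbk$ is a deeply ramified extension in the sense of Coates--Greenberg, which suffices for the same conclusion.

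The main obstacle I anticipate is the $i \geq 2$ portion of (1). A purely formal dimension shift from the $H^1$ vanishing does not succeed inside the category of finite-dimensional $\bbC$-Banach representations, because the standard acyclic (continuously induced) resolutions leave this category. An external input---almost purity, or equivalently deep ramification---is therefore essential; the residual work is then to transfer the integral, almost-vanishing statement to a genuine vanishing after inverting $p$, which is routine.
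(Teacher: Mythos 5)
Your proposal follows essentially the same route as the paper: part (2) is reduced to part (1) via the triviality of $W$ from Proposition \ref{thmFon0411}(2), and part (1) is obtained by observing that $G_{\bbkinfty}$ is the Galois group of the perfectoid field $\hatbbkinfty$, invoking almost vanishing of $H^i(G_{\bbkinfty},\obbc)$, and inverting $p$. The paper simply cites \cite[Prop.~7.13]{Sch12} for all $i\geq 1$ rather than treating $i=1$ separately; your remark that a naive dimension shift would fail, so that an external almost-purity/perfectoid input is genuinely needed, is accurate.
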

\begin{proof}
Note that $G_{\bbkinfty}\simeq G_{\hatbbkinfty}$ where the later denotes the Galois group of the perfectoid field $\hatbbkinfty$. Then Item (1) follows from the (almost) vanishing theorem of \cite[Prop. 7.13]{Sch12}. For Item (2), $W$ is a trivial representation by Prop. \ref{thmFon0411}, hence the statement follows from Item (1).
\end{proof}
%, then this is about Galois cohomology of perfectoid field. Then use \cite[Prop. 7.13]{Sch12} or similarly \cite[Lem. 4.10(v)]{Sch13}? Googled: galois coho of pftd field, well this is special coho..completed structure sheaf. also summarized in Rappoport \url{https://arxiv.org/pdf/1909.07222.pdf}, compared with  s the analogue for perfectoid spaces of Tate’s acyclicity theorem for rigid-analytic spaces, or Cartan’s theorem B for Stein complex spaces, or Serre’s vanishing of higher cohomology on affine schemes.It is surprising in this highly non-noetherian situation.

\subsection{Brinon's Sen theory in the imperfect residue field case} \label{subsecBrinonSen}

We now recall Brinon's Sen theory in the imperfect residue field case, cf. \cite{Bri03}. 

\begin{notation}
We  introduce the following diagram of field extensions.
\begin{equation}
\begin{tikzcd}
\overline{K} \arrow[rr, hook]                                   &  & \overline{K^\pf} \arrow[rr, hook]                            &  & \overline{\wh{K^\pf_\infty}} \arrow[rr, "\text{completion}", hook] &  & C \\
K^{(\pf)}_\infty \arrow[u, "H_{\underline t}"] \arrow[rr, hook] &  & K^\pf_\infty \arrow[rr, "\text{completion}", hook] \arrow[u] &  & \wh{K^\pf_\infty} \arrow[u]                                        &  &   \\
K^{(\pf)} \arrow[u] \arrow[rr, "\text{completion}", hook]       &  & K^\pf \arrow[u]                                              &  &                                                                    &  &   \\
K \arrow[u] \arrow[uu, "\Gamma", bend left=49]                  &  &                                                              &  &                                                                    &  &  
\end{tikzcd}
 \end{equation}
 Let us explain the notations:
 \begin{enumerate}
 \item The general pattern of the diagram is that all vertical arrows are \emph{algebraic} extensions, and all horizontal arrows are \emph{dense} embeddings. All fields  on the right most ($K, \kpf, \wh{K^\pf_\infty}, C$) are $p$-adically complete.
 \item Fields on the third row are introduced in  Example \ref{notaspecialbbk}.
 \item On the second row: $K^{(\pf)}_\infty$ resp. $K^\pf_\infty$ are obtained from $K^{(\pf)}$ resp. $\kpf$ by adjoining all $p$-power roots of unity; $\widehat{K^\pf_\infty}$ is the $p$-adic completion.
 
 \item Fields on the first row (except $C$) are algebraic closures of fields on the second row; $C$ is the $p$-adic completion of all fields on this row.
 
 \item We also introduce the Galois groups $H_{\underline{t}}:= \gal(\overline{K}/K^{(\pf)}_\infty)$ and $\Gamma:= \gal(K^{(\pf)}_\infty/K)$.
 \end{enumerate}
\end{notation}

\begin{prop}
\cite[Thm. 1, Thm. 2]{Bri03}\label{thmBrinonSen}
\begin{enumerate}
\item $C^{H_{\underline{t}}} =\wh{K^\pf_\infty}$.
\item For $W \in \rep_{G_K}(C)$,  $W^{\Ht}$ is an object in  $\rep_\Gamma(\widehat{K_\infty^\pf})$, and there is a $G_K$-equivariant isomorphism 
$$W^\Ht \otimes_{\widehat{K_\infty^\pf}} C \simeq W. $$
The rule $W   \mapsto W^\Ht$ induces a tensor equivalence of categories:
$$\rep_{G_K}(C) \simeq \rep_\Gamma(\widehat{K_\infty^\pf}).$$

\item The rule 
$$
 \rep_\Gamma( {K_\infty^{(\pf)}}) \ni M  \mapsto  M \otimes_{ {K_\infty^{(\pf)}}}  \widehat{K_\infty^\pf} \in \rep_\Gamma(\widehat{K_\infty^{\pf}}) $$
   induces a tensor equivalence  of categories:
$$\rep_\Gamma( {K_\infty^{(\pf)}}) \simeq \rep_\Gamma(\widehat{K_\infty^{\pf}}).$$  
\end{enumerate} 
\end{prop}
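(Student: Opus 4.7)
The plan is to reduce every part of the proposition to the classical Sen theory of \S \ref{subsec41} applied to the CDVF $K^\pf$, whose residue field is the perfection $\mathbf{k}$ of $k_K$. The key subtlety is that the classical theory gives direct information only for the cyclotomic subquotient $\gal(K^\pf_\infty/K^\pf)$ of $\Gamma$, so one must separately upgrade to the full group $\Gamma = \gal(K^{(\pf)}_\infty/K)$ using the normality of $H_{\underline t}$ in $G_K$.

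First I would identify the relevant Galois groups. By Krasner's lemma (equivalently, Ax-Sen-Tate), the absolute Galois group of a CDVF is canonically the same as that of its $p$-adic completion, giving
\[
H_{\underline t} = G_{K^{(\pf)}_\infty} \;\cong\; G_{K^\pf_\infty} \;\cong\; G_{\widehat{K^\pf_\infty}}.
\]
Since $\widehat{K^\pf_\infty}$ is a perfectoid field (perfect residue field $\mathbf k$, containing $\mu_{p^\infty}$), the Ax-Sen-Tate theorem yields $C^{G_{\widehat{K^\pf_\infty}}} = \widehat{K^\pf_\infty}$, which is Part (1). For Part (2), the classical Sen theory applied to $K^\pf$ gives a tensor equivalence
\[
\calH_{K^\pf} : \rep_{G_{K^\pf}}(C) \xrightarrow{\simeq} \rep_{\gal(K^\pf_\infty/K^\pf)}(\widehat{K^\pf_\infty}), \qquad W \mapsto W^{G_{K^\pf_\infty}}.
\]
Given $W \in \rep_{G_K}(C)$, restricting through the inclusion $G_{K^{(\pf)}} \hookrightarrow G_K$ (and the identification $G_{K^{(\pf)}} \cong G_{K^\pf}$) and applying $\calH_{K^\pf}$, I conclude that $W^{H_{\underline t}}$ is finite free of rank $\dim_C W$ over $\widehat{K^\pf_\infty}$, and that the natural map $W^{H_{\underline t}} \otimes_{\widehat{K^\pf_\infty}} C \to W$ is an isomorphism. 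Because we have adjoined $\mu_{p^\infty}$, the extension $K^{(\pf)}_\infty/K$ is Galois, so $H_{\underline t} \trianglelefteq G_K$ with quotient $\Gamma$; this equips $W^{H_{\underline t}}$ with a continuous semi-linear $\Gamma$-action and makes the displayed isomorphism $G_K$-equivariant. Essential surjectivity of $W \mapsto W^{H_{\underline t}}$ is witnessed by the evident inverse $M \mapsto M \otimes_{\widehat{K^\pf_\infty}} C$ (diagonal $G_K$-action via $G_K \twoheadrightarrow \Gamma$), and full faithfulness follows by computing $G_K$-invariants of internal Homs in the two stages $H_{\underline t}$ then $\Gamma$, the first of which reduces to what was just proved.

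Part (3) is a Tate-Sen decompletion along the dense inclusion $K^{(\pf)}_\infty \subset \widehat{K^\pf_\infty}$. I would produce normalized Tate traces $R_n : \widehat{K^\pf_\infty} \to K^{(\pf)}(\mu_{p^n})$ with the standard operator-norm decay, by restricting the classical Tate traces for $K^\pf_\infty/K^\pf$ (available because $K^\pf$ has perfect residue field) and verifying that they preserve the dense subfield $K^{(\pf)}_\infty$. The usual fixed-point iteration in the Tate-Sen axiomatic framework then produces, for every $M \in \rep_\Gamma(\widehat{K^\pf_\infty})$, a $\Gamma$-stable $K^{(\pf)}_\infty$-form $M_0 \subset M$ with $M_0 \otimes_{K^{(\pf)}_\infty} \widehat{K^\pf_\infty} \simeq M$; this gives essential surjectivity, and full faithfulness is faithful flatness. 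The main technical obstacle is verifying the Tate-Sen axioms for the \emph{entire} $\Gamma$-action, rather than merely for its cyclotomic subquotient: one needs uniform estimates on how the Kummer-direction $\gal(K^{(\pf)}_\infty/K_\infty)$ moves the coordinates $t_i^{1/p^m}$, together with the compatibility of this action with the traces $R_n$.
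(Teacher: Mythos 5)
The paper does not prove this proposition: it is stated as a citation of Brinon's \cite[Thm.~1, Thm.~2]{Bri03}, and no argument is given. Your proposal is therefore an attempt to re-derive a result the paper takes as a black box. With that caveat, let me assess it on its own terms.

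Parts (1) and (2) are sound, and the route you take --- reduce to classical Sen theory for the CDVF $K^\pf$ via the Krasner-type identification $H_{\underline t}=G_{K^{(\pf)}_\infty}\cong G_{K^\pf_\infty}\cong G_{\widehat{K^\pf_\infty}}$, then use normality of $H_{\underline t}$ in $G_K$ to carry the Sen module along with its $\Gamma$-action --- is the right mechanism. Your verification of essential surjectivity (equality of the two $\widehat{K^\pf_\infty}$-lattices of the same rank inside $W$) and full faithfulness (two-stage invariants on internal Homs) is correct and morally the same descent argument Brinon runs.

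Part (3) has a genuine gap, and it is located exactly where you flag your ``main technical obstacle''; unfortunately, flagging it is not the same as resolving it. Concretely, the plan to ``produce normalized Tate traces $R_n\colon\widehat{K^\pf_\infty}\to K^{(\pf)}(\mu_{p^n})$ \dots by restricting the classical Tate traces for $K^\pf_\infty/K^\pf$'' does not work as stated. The classical trace $R_n'\colon\widehat{K^\pf_\infty}\to K^\pf(\mu_{p^n})$ takes values in $K^\pf(\mu_{p^n})$, which is strictly larger than (and is the $p$-adic completion of) $K^{(\pf)}(\mu_{p^n})$. Restricting $R_n'$ to the dense subfield $K^{(\pf)}_\infty$ does land in $K^{(\pf)}(\mu_{p^n})$, but that gives a map defined only on $K^{(\pf)}_\infty$, which is useless for the fixed-point iteration that produces a $K^{(\pf)}_\infty$-form of an arbitrary $M\in\rep_\Gamma(\widehat{K^\pf_\infty})$: the iteration must be run on all of $\widehat{K^\pf_\infty}$. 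There is no continuous retraction of $\widehat{K^\pf_\infty}$ onto the non-complete field $K^{(\pf)}(\mu_{p^n})$, so the $R_n$ you describe cannot exist. What one actually needs are traces $R_m\colon\widehat{K^\pf_\infty}\to K(\mu_{p^m},t_1^{1/p^m},\dots,t_d^{1/p^m})$ onto the genuinely \emph{finite} sub-extensions of $K^{(\pf)}_\infty/K$, for the full $(d+1)$-dimensional tower, and with the group $\Gamma$; in the Tate--Sen formalism of \cite[Def.~3.1.3]{BC08} the subgroup $H_0=\ker(\chi_p\colon\Gamma\to\Zp^\times)=\Gamma_\geom$ is then \emph{non-trivial} (contrast this with Rem.~\ref{remts123}, where $H_0$ is trivial and (TS1) is vacuous), so the (TS1) almost-\'etale axiom along the Kummer directions $t_i^{1/p^m}$ becomes a real piece of work. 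Note also that a two-step decompletion $\widehat{K^\pf_\infty}\rightsquigarrow K^\pf_\infty\rightsquigarrow K^{(\pf)}_\infty$ does not rescue the argument, because $\Gamma_\geom$ does not preserve $K^\pf$ (it twists the $t_i^{1/p^m}$ by roots of unity not lying in $K^{(\pf)}$), so the $K^\pf_\infty$-form produced at the first step need not be $\Gamma_\geom$-stable. This is the content one must actually supply, and it is supplied in \cite{Bri03}; as written, your proposal leaves Part (3) unproved.
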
 

\begin{notation}\label{notaGammagp}
Let $\kinfty =\cup_{n \geq 0} K(\varepsilon_n)$, then we have the following diagram of algebraic extensions, where the groups over the arrows denote Galois groups:
\begin{equation}
\begin{tikzcd}
                                   & K^{(\pf)}_\infty                    &                                             \\
\kinfty \arrow[ru, "\Gamma_\geom"] &                                     & K^{(\pf)} \arrow[lu, "\Gamma_{K^{(\pf)}}"'] \\
                                   & K \arrow[lu, "\Gamma_K"] \arrow[ru] &                                            
\end{tikzcd}
\end{equation}
By Lem. \ref{lem: intersect kinfty kpf} in the appendix (cf. also Rem. \ref{rem: kinfty not tot ram}), 
there is an isomorphism
$$\Gamma_{K^{(\pf)}} \simeq \gammak.$$
There is a short exact sequence
$$ 1\to \Gamma_\geom \to \Gamma \to \Gamma_K \to 1.$$
Note $\Gamma_\geom :=\gal( K^{(\pf)}_\infty/ \kinfty) \simeq \zp(1)^d$, where $\zp(1)^d$ signifies that $\Gamma_{K^{(\pf)}}$ acts on $\Gamma_\geom$ via $\chi_p^{\oplus d}$ where $\chi_p$ denotes the $p$-adic  cyclotomic character. Thus, there is an isomorphism
$$   \Gamma_{K^{(\pf)}} \ltimes \Gamma_\geom \simeq \Gamma.$$
Note all the three groups above are $p$-adic Lie groups, and we use $\mathrm{Lie} \Gamma$ etc. to denote their Lie algebras.
\end{notation}
%Note that   $K$ and $K^{(\pf)}$ have the same ramification index; hence $\kinfty \cap K^{(\pf)}$ is a finite extension of $K$.

 \begin{prop}\label{thmnilpotent}
 \cite[Prop. 5]{Bri03} (cf. also the summary in \cite[Lem. 4.4]{Ohk11}).
Let $M \in \rep_\Gamma( {K_\infty^{(\pf)}})$.
\begin{enumerate}
\item The action of $\Gamma_{K^{(\pf)}}$  on $M$ is \emph{locally analytic} (cf. \S\ref{subsubLAV}), in the sense that there exists a $K^{(\pf)}_\infty$-linear endomorphism $\varphi$ of $M$   such that for any $y \in M$, there exists an open subgroup $H(y) \subset \Gamma_{K^{(\pf)}}$ such that 
$$g(y) = \mathrm{exp}(\varphi \log \chi_p(g)) y, \forall g\in H(y). $$
Indeed, the $\varphi$-operator is nothing but the  action of (the 1-dimensional Lie algebra) $\Lie(\Gamma_{K^{(\pf)}})$.

\item The action of the (commutative Lie-group) $\Gamma_\geom$ on $M$ is \emph{locally analytic}, in the sense that there exist  $K^{(\pf)}_\infty$-linear endomorphisms $\mu_1, \cdots, \mu_d$ of $M$ such that for  any $y \in M$, there exists an open subgroup $H(y) \subset \Gamma_\geom$ such that 
$$g(y) = \mathrm{exp}(\sum_{i=1}^d e_i\mu_i) y, \forall g = \prod_{i=1}^d \tau_i^{e_i} \in H(y). $$
Indeed, these $\mu_i$-operators come from the  action of (the d-dimensional Lie algebra) $\Lie(\Gamma_\geom)$.

\item Furthermore, the endomorphisms $\mu_1, \cdots, \mu_d$ are \emph{nilpotent}.
\end{enumerate}
 \end{prop}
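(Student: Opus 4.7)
The plan is to prove (1) and (2) by a standard Sen-theoretic analysis applied to the $p$-adic Lie group $\Gamma$, and then to deduce (3) from a commutator identity combined with a trace argument.

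For (1) and (2), the idea is that $M$ is a continuous finite-dimensional Banach representation of the $p$-adic Lie group $\Gamma \simeq \Gamma_{K^{(\pf)}} \ltimes \Gamma_\geom$ over the field $K_\infty^{(\pf)}$, so we should try to produce Lie-algebra operators whose exponentials realise the group action on a neighborhood of the identity. The formal construction is: the Lie algebra $\Lie(\Gamma)$ acts on the space $M^{\mathrm{la}}$ of locally analytic vectors, and this action gives (after fixing appropriate generators of the Lie algebra of $\Gamma_{K^{(\pf)}}$ and the Lie algebra of the commutative group $\Gamma_\geom$) the operators $\varphi$ and $\mu_1,\ldots,\mu_d$. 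To finish (1) and (2) one must show $M^{\mathrm{la}} = M$ and that $\varphi$, $\mu_i$ are $K_\infty^{(\pf)}$-linear. The first follows from a Tate-Sen type decompletion: after base change to $\widehat{K_\infty^{\pf}}$ the representation splits into generalized eigenspaces for the (compact, commutative) action of a suitable open subgroup of $\Gamma_\geom$, each eigenspace is locally analytic by an exponential argument, and then by Brinon's descent (Prop.~\ref{thmBrinonSen}(3)) this analyticity descends to $K_\infty^{(\pf)}$. The $K_\infty^{(\pf)}$-linearity of $\varphi$ and $\mu_i$ follows since these operators are limits of $K_\infty^{(\pf)}$-linear combinations of elements $\tfrac{g-1}{\log \chi_p(g)}$ (resp.\ $\tfrac{\tau_i^{e}-1}{e}$) that preserve $K_\infty^{(\pf)}$-linearity in the limit.

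For (3), the central input is the semi-direct product structure $\Gamma \simeq \Gamma_{K^{(\pf)}} \ltimes \Gamma_\geom$: choosing topological generators $\tau$ of $\Gamma_{K^{(\pf)}}$ and $\tau_i$ of the $i$-th copy of $\Zp(1)$ in $\Gamma_\geom$, one has $\tau \tau_i \tau^{-1} = \tau_i^{\chi_p(\tau)}$. Passing to Lie algebras and suitably normalizing $\varphi$ so that $\varphi$ corresponds to the generator $(d/ds)|_{s=0}\chi_p^{s}$, this conjugation relation differentiates to a commutator identity of the form
\[
[\varphi, \mu_i] = c \cdot \mu_i, \qquad c \in \Zp^\times.
\]
Now I would extend scalars to an algebraic closure of $K_\infty^{(\pf)}$ and decompose $M$ into generalized eigenspaces $M_\lambda$ for $\mu_i$. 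The relation $[\varphi,\mu_i] = c\mu_i$ implies that $\varphi$ preserves each $M_\lambda$, because if $(\mu_i-\lambda)^N v = 0$, then one computes inductively that $(\mu_i-\lambda)^{N+1}(\varphi v) = 0$. On the finite-dimensional space $M_\lambda$ we then have both $\varphi$ and $\mu_i$ acting, and taking traces gives
\[
0 = \mathrm{tr}\bigl([\varphi,\mu_i]|_{M_\lambda}\bigr) = c\cdot \mathrm{tr}(\mu_i|_{M_\lambda}) = c\lambda \dim M_\lambda.
\]
Since $c \neq 0$, this forces $\lambda = 0$ whenever $M_\lambda \neq 0$, i.e.\ $\mu_i$ has only $0$ as an eigenvalue, so $\mu_i$ is nilpotent.

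The main obstacle I anticipate is (1)--(2), specifically showing that the Lie-algebra operators $\varphi$ and $\mu_i$ are defined on all of $M$ and are $K_\infty^{(\pf)}$-linear rather than only $\widehat{K_\infty^{\pf}}$-linear: this is the descent part of Brinon's theory, and it is not formal; it requires the full force of Prop.~\ref{thmBrinonSen}(3). Part (3), by contrast, is a clean structural consequence of the semi-direct product together with the commutator/trace trick.
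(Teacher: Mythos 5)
The paper itself does not prove this Proposition; it is cited verbatim from Brinon \cite[Prop.~5]{Bri03} (with Ohkubo \cite[Lem.~4.4]{Ohk11} as a secondary reference), so there is no internal proof to compare against. Judged on its own merits, your argument for part (3) is correct and is the key contribution of the sketch. The commutator identity $[\varphi,\mu_i]=c\mu_i$ with $c\neq 0$ does follow from differentiating the conjugation relation $g\tau_i g^{-1}=\tau_i^{\chi_p(g)}$ in the semi-direct product $\Gamma\simeq\Gamma_{K^{(\pf)}}\ltimes\Gamma_\geom$, once one observes that after restricting the cocycle to a small enough open subgroup $\Gamma_m$ fixing a finite subfield $K_m^{(\pf)}\subset K_\infty^{(\pf)}$ containing the cocycle entries, the action becomes a genuine (linear) Lie group homomorphism into $\GL_n$, so the Lie bracket is honestly preserved. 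Your inductive computation that $\varphi$ preserves the generalized $\mu_i$-eigenspaces $M_\lambda$ over an algebraic closure, and the trace identity $0=\tr([\varphi,\mu_i]|_{M_\lambda})=c\lambda\dim M_\lambda$, is correct and forces $\lambda=0$. This is precisely the trick of Grothendieck's quasi-unipotence argument for $\ell$-adic monodromy (one could equivalently invoke Lie's theorem for the 2-dimensional non-abelian solvable algebra spanned by $\varphi,\mu_i$).

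Your sketch for parts (1) and (2) is more wobbly. The core difficulty is real: a compact subset of $K_\infty^{(\pf)}$ need not lie in any single finite level $K_m^{(\pf)}$ (the field is not complete), so the cocycle does not automatically descend; this is exactly where a Tate--Sen decompletion argument is needed to choose a good basis, and you correctly sense that Prop.~\ref{thmBrinonSen}(3) is what supplies it. However, the intermediate step as stated -- base change to $\widehat{K_\infty^{\pf}}$, decompose into generalized eigenspaces for an open subgroup of $\Gamma_\geom$, then descend analyticity -- is somewhat circular (the object $M$ is already given at the decompleted level) and imprecise about what is being decomposed and why each piece is analytic. The cleaner route, which is what Brinon does, is: after a Tate--Sen change of basis the cocycle restricted to some open $\Gamma_m$ lands in $\GL_n(K_m^{(\pf)})$ and is therefore a continuous homomorphism of $p$-adic Lie groups, hence locally analytic; differentiating gives $\varphi,\mu_1,\dots,\mu_d$, and $K_\infty^{(\pf)}$-linearity is by construction. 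I would not call your version a gap (it can almost certainly be repaired), but as written it obscures where the real work happens.
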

% and hence the $\Gamma_\geom$-action on $M$ is \emph{quasi-unipotent} (namely, there?? but this is not linear..)

%\begin{remark}We do not know if the $\Gamma$-action on $M$ is \emph{locally analytic}, even though the above theorem says that the actions of the two generating subgroups are. Indeed, this could be a delicate issue, cf. the related discussions in \cite[Rem. 3.2.5(3)]{GP}.\end{remark} 
 
 \subsection{Tate--Sen decompletion in the imperfect residue field case} \label{subsec43}

% Brinon's Sen theory shows that one can ``descend" a $C$-representation (of $G_K$) to a representation (of $\Gamma$) defined over $\widehat{K^\pf_\infty}$. However, it seems hopeless to further ``descend"   to the $\hatkinfty$-level, as indeed $\hatkinfty$ is \emph{not} a perfectoid field. (Although the Simpson correspondence in next section shows that we can still ``extract" some representation on the $\hatkinfty$-level!) Nonetheless, we will show the ``decompletion equivalence" still holds for general $K$.
 
 \begin{notation} \label{nota: assume intersect}
Use Notation \ref{notaGammagp}.
Assume:
\begin{itemize}
\item $K$ contains all $p^2$-th roots of unity; 
%and  \item $\kinfty \cap K^{(\pf)}=K$. the open embeddings in Notation \ref{notaGammagp} are all isomorphisms. Indeed,
\end{itemize}
Under above assumption,  we have
\begin{equation} \label{eq: equal gamma gps}
\Gamma_{K^{(\pf)}} \simeq \Gamma_{K^{\pf}} \simeq \gammak \simeq \zp.
\end{equation}
 Use $\gamma_0$ to denote a topological generator of these groups.
For each $s \geq 0$, let $K_s \subset \kinfty$ be the unique sub-extension such that $[K_s:K]=p^s$. 
For $x\in \kinfty$, define Tate's normalized traces
$$t_{K_s}(x) :=p^{-n+s} \mathrm{Tr}_{K_n/K_s}(x) \in K \text{ for $n \gg 0$  such that $x \in K_n$} $$
Similarly define $K^\pf_s$ and $t_{K^\pf_s}$. It is clear that these trace maps are compatible (for elements in $\kinfty$), namely,
\begin{equation}\label{eqcompakkpf}
t_{K_s}(x)=t_{K^\pf_s}(x), \forall x\in \kinfty
\end{equation}
\end{notation} 
 
 %Recall we have the valuation-compatible inclusion $K \into \kpf$. We use $v_p$ to denote the valuation such that $v_p(p)=1$.   

 \begin{prop}\label{thm432imperf}
Suppose   assumption  in Notation \ref{nota: assume intersect} is satisfied. 
 For any $h \geq 1$, the map 
\begin{equation} \label{tsimperf}
\iota: H^1(\gammak, \GL_h(\kinfty)) \to  H^1(\gammak, \GL_h(\hatkinfty))
\end{equation}
induced by $\GL_h(\kinfty) \into \GL_h(\hatkinfty)$ is a bijection.
 \end{prop}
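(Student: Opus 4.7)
The plan is to invoke Berger-Colmez's Tate-Sen formalism \cite{BC16}: once the three Tate-Sen axioms (TS1), (TS2), (TS3) are verified for the triple $(\hatkinfty, \gammak, \{t_{K_s}\}_{s \geq 0})$, the abstract decompletion theorem in \emph{loc. cit.} automatically yields the bijection $\iota$ on $H^1$.

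To verify the axioms, I would first extend each normalized trace $t_{K_s}: \kinfty \to K_s$ by continuity to a $K_s$-linear projector $t_{K_s}: \hatkinfty \twoheadrightarrow K_s$, producing a $\gammak$-stable topological splitting $\hatkinfty = K_s \oplus X_s$ with $X_s := \ker(t_{K_s})$. The crux is the compatibility \eqref{eqcompakkpf}: these projectors agree with the restrictions to $\hatkinfty$ of the classical normalized traces $t_{K^\pf_s}$ on $\widehat{K^\pf_\infty}$. Consequently the Tate-Sen bounds — uniform boundedness of $\|t_{K_s}\|$ for (TS1), the auxiliary valuation estimate for (TS2), and the existence of $s_0$ such that $\gamma_0^{p^s}-1$ acts as a topological automorphism of $X_s$ with uniformly bounded inverse for $s \geq s_0$ for (TS3) — are inherited verbatim from the classical perfect-residue-field case. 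The underlying reason is that these bounds depend only on the ramification filtration of the totally ramified $\zp$-extension $\kinfty/K$ (well-defined thanks to $K \supset \mu_{p^2}$) and not on the residue field being perfect.

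With the axioms in hand, the standard successive approximation argument of \cite{BC16} applies. For surjectivity: given a continuous cocycle $c: \gammak \to \GL_h(\hatkinfty)$, one builds $M \in \GL_h(\hatkinfty)$ as a convergent infinite product of near-identity corrections, each obtained by inverting $\gamma_0^{p^s}-1$ on the $X_s$-valued matrix entries, so that the twisted cocycle $M^{-1} c(\gamma) \gamma(M)$ takes values in $\GL_h(K_s) \subset \GL_h(\kinfty)$. For injectivity: if two $\kinfty$-valued cocycles become cohomologous over $\GL_h(\hatkinfty)$ via some $M \in \GL_h(\hatkinfty)$, the same procedure applied to $M$ alone replaces it by an element of $\GL_h(\kinfty)$ without changing the conjugation relation.

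The main obstacle is the verification of (TS1)-(TS3) in the imperfect residue field setting. Here the compatibility \eqref{eqcompakkpf} essentially reduces the work to a transport of bounds from the classical setting, so the substantive point is simply to observe that the ramification data of the cyclotomic tower is insensitive to the residue field being imperfect — everything else is then formal from \cite{BC16}.
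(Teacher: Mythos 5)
Your proposal takes essentially the same route as the paper: transport the quantitative Tate-Sen inputs from the perfect residue field case $\kpf$ via the compatibility \eqref{eqcompakkpf}, then run the standard decompletion machinery. The paper happens to follow Fontaine's exposition in \cite{Fon04} rather than the abstract Berger-Colmez formalism, but Rem.~\ref{remts123} explicitly records that its three ``ingredients'' are precisely the axioms (TS1)--(TS3) of \cite[Def.~3.1.3]{BC08}.

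Two small corrections are worth making. First, the citation should be \cite{BC08}, not \cite{BC16} --- the latter is about locally analytic vectors and does not contain the Tate-Sen axioms; relatedly, your labeling is slightly off, as (TS1) is the almost-\'etale condition (vacuous here since $H_0 = \ker(\gammak \to \zp^\times)$ is trivial), and boundedness of the trace maps is part of (TS2). Second, and more substantively, the claim that (TS3) is ``inherited verbatim'' glosses over a genuine step: knowing $\gamma_0 - 1$ is invertible with bounded inverse on $\ker(t_{K^\pf_s}) \subset \widehat{K^\pf_\infty}$ does not \emph{a priori} tell you it is bijective on the smaller subspace $X_s \subset \hatkinfty$, since you would still need to know the inverse lands back in $X_s$. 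The paper explicitly flags that ``the construction of $\rho$ does not directly follow from the $\kpf$-case'': one first checks $\gamma_0 - 1$ is bijective on each finite-dimensional $K_{n,0} = K_n \cap \ker t_K$ by linear algebra (injective implies bijective), which yields $\rho$ on the dense subspace $K_{\infty,0}$, and only then does the inherited valuation estimate \eqref{eq113fon04} show $\rho$ is bounded and extends to $L_0$. With that caveat, your argument is correct and matches the paper's.
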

\begin{proof}
When $K$ has perfect residue field, this is a theorem of Sen (cf. \cite[Thm. 1.2']{Fon04}). Thus, we have 
\begin{equation} \label{tsperf}
\iota_\kpf: H^1(\Gamma_\kpf, \GL_h(K^\pf_\infty)) \to  H^1(\Gamma_\kpf, \GL_h(\widehat{K^\pf_\infty})) \text{ is a bijection. }
\end{equation}
We now show that indeed all the ingredients leading to the proof of \eqref{tsperf} still hold for $K$, and hence the same strategy can be used to prove \eqref{tsimperf} is bijective. We will follow the  exposition in the proof \cite[Thm. 1.2']{Fon04}.

\textbf{Ingredient 1} (cf. \cite[Prop. 1.13]{Fon04}): For $x\in \kinfty$, we have
\begin{equation}\label{eq113fon04}
v_p(t_K(x) -x) \geq v_p( (\gamma_0 -1)x) -\frac{p}{p-1}
\end{equation}
Note this is proved for $x \in K^\pf_\infty$ (and using $t_\kpf$) in \cite[Prop. 1.13]{Fon04}, hence this \emph{automatically} holds for $x\in \kinfty$ by the compatibility \eqref{eqcompakkpf}.

\textbf{Ingredient 2} (cf. \cite[Prop. 1.15]{Fon04}): the following statements hold.
\begin{enumerate}
\item $t_K: \kinfty \to K$ is continuous.
\item Let $\hat{t}_K: \hatkinfty \to K$ be the continuous extension of $t_K$, and let $L_0$ denote the kernel. Then we have $\hatkinfty =K \oplus L_0$. Furthermore, $\gamma_0 -1$ is bijective on $L_0$ with a continuous inverse $\rho$.
\item $v_p(\hat{t}_K(x)) \geq v_p(x) -\frac{p}{p-1}$ for all $x\in \hatkinfty$ and $v_p(\rho(y))  \geq v_p(y) -\frac{p}{p-1}$ for all $y \in L_0$.
\item For all $x\in \hatkinfty$,  we have $\lim_{s \to \infty} \hat{t}_{K_s}(x) =x$
\end{enumerate}
Item (1), (4), as well as the first inequality of Item (3) follow  by \eqref{eqcompakkpf} and the known result for $\kpf$. It suffices to construct $\rho$   in Item (2), and prove the second inequality of Item (3). 
The construction of $\rho$ does not directly follow from the $\kpf$-case, but the exact same argument works, and let us give a quick sketch. 
(We also find the original argument in  \cite[p173, Prop. 7]{Tat67} very concise.)
Indeed, note that $L_0=(1-\hat{t}_K)\hatkinfty$.
Now let $K_{n, 0} =K_n \cap L_0$, and let $K_{\infty, 0} =\cup_{n\geq 0} K_{n, 0}$, then its closure is $L_0$. Note $\gamma_0-1$ is injective and $K$-linear on the \emph{finite dimensional} $K$-vector spaces $K_{n, 0}$, and hence is bijective. 
Thus $\gamma_0-1$ is bijective on $K_{\infty, 0}$; let 
$$\rho: K_{\infty, 0} \to K_{\infty, 0}$$
 be its inverse. 
Let $x \in   K_{\infty, 0}$ (hence $t_K(x)=0$), let $y=(\gamma_0 -1)x$, hence $\rho(y)=x$. Thus Eqn. \eqref{eq113fon04} in Ingredient 1 implies that  
\begin{equation}\label{eqrhoy}
v_p(\rho(y)) \geq v_p(y) -\frac{p}{p-1}. 
\end{equation}
Thus $\rho$ is continuous (as $y$ runs through all of $ K_{\infty, 0}$) and extends to $L_0$, and the second inequality of Item (3) follows from \eqref{eqrhoy}.

\textbf{Ingredient 3}: 
A finite dimensional $K$-sub vector space contained in $\hatkinfty$, which is stable by $\gamma_0$, is contained in $\kinfty$.
The proof of \cite[Prop. 1.16]{Fon04} works verbatim here, since we have Ingredient 2.

With all three ingredients established, all the argument below  \cite[Prop. 1.16]{Fon04}  work through verbatim, which establishes \eqref{tsimperf}.
\end{proof}

 \begin{rem}\label{remts123}
 Note that Ingredients 1 and 2 above show  that   if we use
$$G_0 =\gammak,  \quad  \wt{\Lambda} =\hatkinfty, \quad  \Lambda_{n}=K_n, \quad  R_n =\hat{t}_{K_n}$$
then these data satisfy all the axioms (TS1), (TS2), (TS3) in \cite[Def. 3.1.3]{BC08}. Note that in this case, the kernel (denoted by $H_0$ in \emph{loc. cit.}) of $\gammak \to \zp^\times$ is trivial, hence (TS1) of \emph{loc. cit.} becomes \emph{vacuous}, and hence we can choose any $c_1>0$. We can then choose $c_2=c_3=\frac{p}{p-1}$.
 \end{rem}

\begin{prop}\label{thmfon24}
For $X\in \rep_{\Gamma_K}(\hatkinfty)$, let $X_f$ be the union of finite dimensional $K$-sub-vector spaces of $X$ that are stable under $\gammak$, then  $X_f\in \rep_{\Gamma_K}(\kinfty)$, and $X_f \otimes_\kinfty \hatkinfty \to X$ is a canonical isomorphism.
\end{prop}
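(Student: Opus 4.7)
The plan is to follow the Sen decompletion strategy: use the cocycle bijection from Prop.~\ref{thm432imperf} to change the $\hatkinfty$-basis of $X$ to one on which the Galois matrices have entries in $\kinfty$ (in fact in a fixed finite layer $K_n$), and then identify $X_f$ with the $\kinfty$-span of this new basis. Concretely, fixing any $\hatkinfty$-basis $\{e_i\}$ of $X$ yields a continuous $1$-cocycle $g \mapsto U_g \in \GL_h(\hatkinfty)$; by Prop.~\ref{thm432imperf} there exists $M \in \GL_h(\hatkinfty)$ with $V_g := M^{-1} U_g\, g(M) \in \GL_h(\kinfty)$, and passing to the new basis $f_i := \sum_j M_{ji} e_j$ one has $g(f_i) = \sum_j (V_g)_{ji} f_j$.

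Second, I will sharpen the containment $V_g \in \GL_h(\kinfty)$ to $V_g \in \GL_h(K_n)$ for a single fixed $n$. Since $\gammak \simeq \Zp$ is topologically generated by $\gamma_0$ and $\GL_h(\kinfty) = \bigcup_n \GL_h(K_n)$, we may pick $n$ with $V_{\gamma_0} \in \GL_h(K_n)$. The cocycle relation together with the $\gammak$-stability of $K_n$ gives $V_{\gamma_0^k} \in \GL_h(K_n)$ for every $k \in \Z$; continuity of $V$, density of $\{\gamma_0^k\}$ in $\gammak$, and closedness of $\GL_h(K_n)$ in $\GL_h(\hatkinfty)$ (since $K_n/K$ has finite degree, hence is complete) then upgrade this to $V_g \in \GL_h(K_n)$ for every $g \in \gammak$. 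Consequently each $X_0^{(m)} := \bigoplus_i K_m f_i$ with $m \geq n$ is a $K$-finite-dimensional $\gammak$-stable subspace of $X$, so $X_0 := \bigoplus_i \kinfty f_i = \bigcup_{m \geq n} X_0^{(m)} \subset X_f$.

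The reverse inclusion $X_f \subset X_0$ is the heart of the argument. Given $v = \sum a_i f_i \in X_f$ inside a finite-dimensional $K$-stable subspace $V \subset X$, let $W \subset \hatkinfty$ be the (finite-dimensional) $K$-span of all $\{f_i\}$-coordinates of elements of $V$, and set $W' := K_n \cdot W$. Expanding the identity $\gamma_0(w) \in V$ for any $w = \sum b_j f_j \in V$ shows that the column vector $V_{\gamma_0} \cdot \gamma_0(b)$ has entries in $W$, so inverting $V_{\gamma_0} \in \GL_h(K_n)$ gives $\gamma_0(b) \in (W')^h$; thus $\gamma_0(W) \subset W'$, and since $\gamma_0$ preserves $K_n$ we get $\gamma_0(W') \subset W'$. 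Ingredient~3 from the proof of Prop.~\ref{thm432imperf} (any finite-dimensional $\gamma_0$-stable $K$-subspace of $\hatkinfty$ lies in $\kinfty$) then forces $W \subset W' \subset \kinfty$, so $a_i \in \kinfty$ and $v \in X_0$. This gives $X_f = X_0 \in \rep_{\gammak}(\kinfty)$ of rank $h$, and tensoring up yields the canonical isomorphism $X_f \otimes_{\kinfty} \hatkinfty \simeq X$.

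The main obstacle is this reverse inclusion, or more precisely the bridge from a $K$-stable subspace inside the abstract representation $X$ to a $\gamma_0$-stable subspace sitting concretely inside $\hatkinfty$ on which Ingredient~3 applies. The enlargement $W \leadsto W' = K_n \cdot W$ is the trick that absorbs the twist introduced by the matrix $V_{\gamma_0}$; it works precisely because we have already pinned the cocycle down to the finite-layer group $\GL_h(K_n)$.
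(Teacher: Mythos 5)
Your argument is essentially the route the paper takes (which defers to the proof of \cite[Thm.~2.4]{Fon04}, replacing Fontaine's decompletion input with Prop.~\ref{thm432imperf}), and the core steps are sound: trivialize the cocycle via Prop.~\ref{thm432imperf}, sharpen to a fixed finite level $K_n$ using the cocycle relation plus density of $\{\gamma_0^k\}_{k\in\Z}$ and closedness of $K_n$ in $\hatkinfty$, and then for the reverse inclusion $X_f\subset X_0$ use the coordinate subspace $W'=K_n\cdot W$, whose $\gamma_0$-stability lets you invoke Ingredient~3. Your enlargement $W\leadsto W'$ is exactly the device needed to absorb the twist by $V_{\gamma_0}$, and the verification that $\gamma_0(W)\subset W'$ implies $\gamma_0(W')\subset W'$ is correct.

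There is, however, one genuine gap. Prop.~\ref{thm432imperf} carries the standing hypothesis that $K$ contains all $p^2$-th roots of unity, and your Step~2 also silently relies on $\gammak\simeq\Zp$ being topologically generated by a single $\gamma_0$ --- which again requires this hypothesis (in general $\gammak\subset\Zp^\times$ may have a nontrivial finite torsion part, and the layers $K_s$ as used in \S\ref{subsec43} are only defined under this assumption). Your proof therefore establishes the proposition only when $K$ contains all $p^2$-th roots of unity. The paper notes this explicitly and supplies the reduction: pass to $K'=K(\mu_{p^2})$, apply the result to $\Gamma_{K'}$ to produce $X_f'$, and then observe that $X_f=X_f'$ because $\Gamma_{K'}\subset\gammak$ has finite index (so any finite-dimensional $\Gamma_{K'}$-stable $K$-subspace is contained in the finite-dimensional $\gammak$-stable subspace obtained by summing its finitely many $\gammak/\Gamma_{K'}$-translates). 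You should either state the $p^2$-roots hypothesis upfront and then give this short descent step, or prove the unramified-twist-free case and cite the reduction; as written the general statement is not covered.
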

\begin{proof}
 If $K$  contains all $p^2$-th roots of unity, then  the argument in  \cite[Thm. 2.4]{Fon04}  works verbatim, as now Prop. \ref{thm432imperf} is established. 
The general case follows easily: indeed, simply consider $K'/K$ the extension by adjoining all $p^2$-th roots of unity. Restrict $X$ as   representation of $\Gamma_{K'}$, and let $X_f'$ be the union of finite dimensional $K$-sub vector spaces of $X$ that are stable under  $\Gamma_{K'}$. Now the key point is to note we must have $X_f=X_f'$ as $\Gamma_{K'} \subset \gammak$ is of finite index, hence $X_f'$ is indeed automatically $\gammak$-stable.
\end{proof}

\begin{cor}\label{corequivkinfty}
For $Y \in \rep_{\Gamma_K}(\kinfty)$, the extension $Y \otimes_\kinfty \hatkinfty$ is an object in $\rep_{\Gamma_K}(\hatkinfty)$.
This induces a tensor equivalence of categories: 
$$\rep_{\Gamma_K}(\kinfty) \xrightarrow{\simeq } \rep_{\Gamma_K}(\hatkinfty).$$
\end{cor}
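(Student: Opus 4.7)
The plan is to show that the functor $F: Y \mapsto Y \otimes_\kinfty \hatkinfty$ is a tensor equivalence by exhibiting a quasi-inverse $G: X \mapsto X_f$ (as in Prop. \ref{thmfon24}) and checking the two natural transformations are isomorphisms.

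First I would check that $F$ is well-defined: $Y \otimes_\kinfty \hatkinfty$ is finite free over $\hatkinfty$ of the same rank as $Y$, and the semi-linear $\Gamma_K$-action on $Y$ extends by functoriality; continuity of this extension follows by picking a basis of $Y$ and noting the matrix entries (originally in $\kinfty$) remain continuous with respect to the $p$-adic topology on $\hatkinfty$. Functoriality in $Y$ and compatibility with tensor products are routine.

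Next, one direction of the equivalence is essentially Prop. \ref{thmfon24}: for $X \in \rep_{\Gamma_K}(\hatkinfty)$, the natural map $X_f \otimes_\kinfty \hatkinfty \to X$ is a $\Gamma_K$-equivariant isomorphism, which gives $F\circ G \simeq \mathrm{id}$ naturally. For the other direction $G \circ F \simeq \mathrm{id}$, I need to show the inclusion $Y \hookrightarrow (Y\otimes_\kinfty \hatkinfty)_f$ is an equality. The nontrivial step is to check $Y \subset (Y\otimes \hatkinfty)_f$, i.e., every $y\in Y$ lies in some finite-dimensional $K$-subspace of $Y\otimes \hatkinfty$ stable under $\Gamma_K$. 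For this I would first use continuity of the $\Gamma_K$-action on $Y$ to find an open subgroup $\Gamma_{K_s}\subset \Gamma_K$ and a $\Gamma_{K_s}$-stable, finite-dimensional $K_s$-subspace of $Y$ containing $y$ (this is a standard Sen-style argument using the normalized trace/lattice approach); since $[K_s:K]<\infty$, this is finite-dimensional over $K$; averaging over coset representatives of $\Gamma_K/\Gamma_{K_s}$ yields a finite-dimensional $K$-subspace of $Y$ containing $y$ and stable under all of $\Gamma_K$. This realizes $y$ as a locally finite vector, so $Y \subset (Y\otimes \hatkinfty)_f$. Applying $F$ to this inclusion together with Prop. \ref{thmfon24} gives $(Y\otimes\hatkinfty)_f \otimes_\kinfty \hatkinfty \simeq Y \otimes_\kinfty \hatkinfty$, so both $\kinfty$-vector spaces have the same (finite) dimension, forcing equality.

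Finally, $F$ is obviously a tensor functor via the canonical isomorphism $(Y_1\otimes_\kinfty Y_2)\otimes_\kinfty \hatkinfty \simeq (Y_1\otimes_\kinfty \hatkinfty)\otimes_\hatkinfty (Y_2\otimes_\kinfty \hatkinfty)$ compatible with the diagonal $\Gamma_K$-action, and the quasi-inverse $G$ inherits the tensor structure from the equivalence. The main technical obstacle will be the lattice/continuity step producing a $\Gamma_{K_s}$-stable finite-dimensional $K_s$-subspace containing an arbitrary $y\in Y$; this is where one really uses the Tate--Sen axioms verified in Rem. \ref{remts123}, but it should proceed exactly as in the perfect residue field case since all of Tate's normalized traces, the decomposition $\hatkinfty = K \oplus L_0$, and Prop. \ref{thm432imperf} have been established in the imperfect setting.
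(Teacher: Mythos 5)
Your overall strategy is exactly the paper's (implicit) one: Prop.~\ref{thmfon24} provides the quasi-inverse $X \mapsto X_f$, essential surjectivity is immediate, and fullness/faithfulness reduce to showing $(Y \otimes_\kinfty \hatkinfty)_f = Y$ via the inclusion $Y \subset (Y \otimes_\kinfty \hatkinfty)_f$ plus a rank count. The structure is sound.

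One comment on efficiency and emphasis. You flag the step ``every $y \in Y$ lies in a finite-dimensional $K$-subspace stable under $\Gamma_K$'' as the main technical obstacle, invoking normalized traces and the Tate--Sen axioms of Rem.~\ref{remts123}. That machinery would certainly do the job, but it is more than is needed, and citing it obscures where the genuine Tate--Sen input actually lives (namely in Prop.~\ref{thmfon24}, for descent from $\hatkinfty$ down to $\kinfty$). Descent from $\kinfty$ down to some $K_m$ is elementary: fix a $\kinfty$-basis $e_1,\dots,e_n$ of $Y$, giving a continuous cocycle $\gamma \mapsto A_\gamma \in \GL_n(\kinfty)$; choose a finite topological generating set $\gamma_1,\dots,\gamma_r$ of $\Gamma_K$ and pick $m$ with $A_{\gamma_1},\dots,A_{\gamma_r} \in \GL_n(K_m)$ (possible since $\kinfty = \bigcup_m K_m$); the cocycle relation $A_{\gamma\delta} = A_\gamma\,\gamma(A_\delta)$ and the $\Gamma_K$-stability of $K_m$ propagate $A_\gamma \in \GL_n(K_m)$ to the dense subgroup generated by the $\gamma_i$; continuity of $\gamma\mapsto A_\gamma$ together with completeness of $K_m$ (so $\GL_n(K_m)$ is closed in $\GL_n(\hatkinfty)$) then give $A_\gamma\in\GL_n(K_m)$ for all $\gamma \in \Gamma_K$. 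Hence $\bigoplus_i K_{m'}e_i$ is a $\Gamma_K$-stable finite-dimensional $K$-subspace for every $m'\ge m$, and $Y$ is their union, yielding $Y \subset (Y\otimes_\kinfty\hatkinfty)_f$ directly. This also removes the slight awkwardness of the ``averaging over coset representatives'' step, which in any case should be phrased as taking the $K$-span of the $\Gamma_K$-orbit rather than averaging. None of this is a gap --- your argument goes through --- but the leaner route both shortens the proof and makes clearer that the hard analytic input is concentrated in Prop.~\ref{thmfon24}.
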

%A quasi-inverse is constructed via Thm. \ref{thmfon24}.

\subsection{On $\gammak$-cohomology}

%  \begin{lemma}    Let $H \in \rep_{\Gamma_K}(\kinfty)$. \begin{enumerate} \item The $\Gamma_K$-action on $H$ is locally analytic. As a consequence, one can define a $\kinfty$-linear endomorphism $\varphi: H \to H$, called the Sen operator. \item  $H^{\gammak} \otimes_K \kinfty= H^{\varphi=0}$. \end{enumerate} \end{lemma}
% \begin{proof} The base change $H \otimes_\kinfty K^\pf_\infty$ is an object in $\rep_{\Gamma_\kpf}(K^\pf_\infty)$, with the $\Gamma_\kpf$-action locally analytic by Sen theory (in the perfect residue field case). Note that $\Gamma_\kpf \simeq \Gamma_K$, hence the $\Gamma_K$ action on $H$ is also locally analytic.   Item (2) follows the same proof as in the perfect residue field case, cf. \cite[Thm. 6]{Sen81}. We briefly sketch the argument. Note $H^{\gammak} \subset H^{\varphi=0}$ since the $\gammak$-action is locally analytic. Since $\varphi$ commutes with $\gammak$, we can assume $H=H^{\varphi=0}$, and it suffices  to show in this case $H$ has a $\gammak$-fixed basis. Choose any basis $(e_1, \cdots, e_n)$ of $H$, each $e_i$ is fixed by some open subgroup of $\gammak$, and hence we can suppose all $e_i$'s are fixed by an open subgroup $\Gamma_{K'}$. This implies there is a $\gal(K'/K)$-action on the $K'$-vector space $H^{\Gamma_{K'}}$, which has to be a trivial representation by Hilbert's Theorem 90. \end{proof}

\begin{lemma}\label{lemliuzhu310}
Let $M \in \rep_\gammak(K_{m_0})$ for some $m_0 \geq 0$. If $m \gg m_0$, then for all $\gamma \in \gammak$ such that $v_p(\chi_p(\gamma)-1) >m$, $\gamma-1 $ is continuously invertible on $(M\hat{\otimes}_{K_{m_0}} \hatkinfty)/(M\otimes_{K_{m_0}} K_m)$. Thus, for $i \geq 0$, we have natural isomorphisms
$$H^i(\gammak, M\otimes_{K_{m_0}} K_m) \simeq H^i(\gammak, M\hat{\otimes}_{K_{m_0}} \hatkinfty)  $$
\end{lemma}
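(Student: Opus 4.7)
The asserted isomorphism is equivalent, via the long exact sequence of $\gammak$-cohomology attached to the short exact sequence
$$0 \to M\otimes_{K_{m_0}} K_m \to M\hat{\otimes}_{K_{m_0}} \hatkinfty \to N \to 0, \quad N := (M\hat{\otimes}_{K_{m_0}} \hatkinfty)/(M\otimes_{K_{m_0}} K_m),$$
to the vanishing of $H^i(\gammak, N)$ for $i \geq 0$. My plan is to invert $\gamma-1$ on $N$ for a single well-chosen $\gamma$, and then to promote this to vanishing of the full cohomology by a Hochschild--Serre argument.

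First, I would harmlessly pass to a finite index subgroup so that $\gammak$ may be assumed to be (topologically) $\simeq \zp$ (as in the reduction in the proof of Prop. \ref{thmfon24}). Tate's normalized trace $\hat{t}_{K_m}:\hatkinfty \to K_m$ from Prop. \ref{thm432imperf} then gives a $\gammak$-equivariant, $K_m$-linear splitting $\hatkinfty = K_m \oplus L_m$ with $L_m = \ker \hat{t}_{K_m}$, and the same trace construction at level $m$ produces a continuous inverse $\rho$ of $\gamma_0-1$ on $L_m$ whose operator norm is controlled by $c_3 = p/(p-1)$ as recorded in Rem. \ref{remts123}. Tensoring with $M$ over $K_{m_0}$ identifies $N$ with the closed $\gammak$-stable subspace $M\hat{\otimes}_{K_{m_0}} L_m$, so the problem reduces to inverting $\gamma-1$ there.

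Next, for $\gamma \in \gammak$ with $v_p(\chi_p(\gamma)-1) > m$, I would use the Leibniz identity on pure tensors,
$$\gamma - 1 = \gamma_M \otimes (\gamma-1)_{L_m} + (\gamma-1)_M \otimes 1,$$
and factor out the first summand. The operator $(\gamma-1)_{L_m}$ has a bounded continuous inverse $\rho_\gamma$ on $L_m$ by the Tate--Sen estimates, with norm improving as $v_p(\chi_p(\gamma)-1)$ grows. Since the $\gammak$-action on the finite-dimensional $K_{m_0}$-space $M$ is continuous, $\|(\gamma-1)_M\|$ can be made arbitrarily small by shrinking $\gamma$ into a sufficiently small open subgroup, i.e., by taking $m$ large enough relative to $m_0$. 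Then the perturbation $\gamma_M^{-1}\otimes \rho_\gamma \circ ((\gamma-1)_M \otimes 1)$ has operator norm strictly less than $1$, so $\mathrm{id}$ plus it is invertible by geometric series; this shows $\gamma-1$ is continuously invertible on $N$.

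Finally, let $\gammak' := \overline{\langle \gamma \rangle} \subset \gammak$, which is open and isomorphic to $\zp$ since $\gamma$ has infinite order. The invertibility of $\gamma-1$ on $N$ gives $H^0(\gammak', N) = H^1(\gammak', N) = 0$, and $H^i(\gammak', N)=0$ for $i \geq 2$ because $\zp$ has cohomological dimension $1$. Hochschild--Serre with the finite quotient $\gammak/\gammak'$ (together with the triviality of its higher cohomology with values in a uniquely divisible module) then yields $H^i(\gammak, N) = 0$ for all $i \geq 0$, which gives the claimed isomorphisms. The main technical obstacle is the quantitative calibration of $m$ with the continuity modulus of the $\gammak$-action on $M$ and with the growth of $\|\rho_\gamma\|$; all of these are made explicit by the constants in Rem. \ref{remts123}, so ``$m \gg m_0$'' just means choosing $m$ large enough for this geometric-series convergence to apply uniformly on $M\hat{\otimes}_{K_{m_0}} L_m$.
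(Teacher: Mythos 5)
The overall architecture you propose --- reduce to vanishing of $H^i(\gammak,N)$ for $N = M\hat\otimes_{K_{m_0}}L_m$, use Tate's normalized trace from Prop.~\ref{thm432imperf} and Rem.~\ref{remts123}, run a twisted-Leibniz/geometric-series perturbation, and finish with Hochschild--Serre --- is exactly the route the paper intends by citing \cite[Lem.~3.10]{LZ17}. But there is a genuine error in the central step, and your intuition about the relevant estimate is reversed.

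You assert that for any $\gamma$ with $v_p(\chi_p(\gamma)-1)>m$, the operator $(\gamma-1)_{L_m}$ has a bounded continuous inverse $\rho_\gamma$, \emph{with norm improving as $v_p(\chi_p(\gamma)-1)$ grows}. This is false, and the dependence goes the other way. The axiom (TS3) of \cite[D\'ef.~3.1.3]{BC08}, which is what Rem.~\ref{remts123} verifies, yields invertibility of $\gamma-1$ on $L_n$ only for $\gamma$ with $n(\gamma)\leq n$, i.e.\ for $\gamma$ \emph{not too deep} relative to the trace level $n$. Concretely, if $\gamma$ is deep enough to fix $K_{m'}$ for some $m'>m$, then $\gamma-1$ annihilates the nonzero finite-dimensional $K_m$-subspace $K_{m'}\cap L_m\subset L_m$, so $(\gamma-1)_{L_m}$ has a kernel and $\rho_\gamma$ simply does not exist. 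On the corresponding piece $(M\otimes_{K_{m_0}}K_m)\otimes_{K_m}(K_{m'}\cap L_m)$ of $N$ the operator $\gamma-1$ degenerates to $(\gamma_M-1)\otimes 1$, whose invertibility is not ensured by anything (for instance $1$ may be an eigenvalue of $\gamma_M$). Thus your factorization $\gamma-1 = A(1+A^{-1}B)$ never gets started, because $A=\gamma_M\otimes(\gamma-1)_{L_m}$ is not invertible.

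The repair, and the content of the argument in \cite{LZ17}, is to \emph{calibrate} the depth of $\gamma$ with the trace level: one uses a $\gamma$ whose fixed field inside $\kinfty$ is precisely $K_m$ (equivalently, $v_p(\chi_p(\gamma)-1)$ equals $m$ up to the constant coming from the roots of unity already in $K$). For such $\gamma$, (TS3) gives $\rho_\gamma$ on $L_m$ with the uniform bound $c_3$, while $\gamma$ is still deep enough that $\lVert(\gamma-1)_M\rVert$ is as small as needed once $m\gg m_0$; this is where the hypothesis $m\gg m_0$ really enters. With that one well-chosen $\gamma$, the rest of your write-up (Leibniz identity, geometric series, $H^i(\overline{\langle\gamma\rangle},N)=0$, Hochschild--Serre across a finite quotient) goes through and gives the cohomology isomorphism. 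Note also that the ``for all $\gamma$ with $v_p(\chi_p(\gamma)-1)>m$'' formulation in the statement is itself loose in this respect --- \cite[Lem.~3.10]{LZ17} pins down $v(\chi(\gamma)-1)=m'$ exactly --- and what the application in Cor.~\ref{corhiggsbasechange} actually requires is merely the existence of one such $\gamma$, which is what the correct argument produces.
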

\begin{proof}
The proof is exactly the same as \cite[Lem. 3.10]{LZ17}, since we already constructed Tate's normalized trace in this setting.
\end{proof}

%\begin{lemma}\label{lemvanishfincyc}  Let $M$ be a finite dimensional vector space over a characteristic zero field $P$, and let $G$ a finite cyclic group that acts $P$-linearly on $M$. Then $$H^i(G, M)=0, \forall i \geq 1.$$\end{lemma} \begin{proof} This is standard, cf. e.g. \cite[p. 58, Example 2, Exercise 1]{BrownCohoGp}. Indeed, one sees that for $i\geq 1$, $H^i(G, M)$ is a $P$-vector space and is killed by the order of $G$, and hence has to be zero. \end{proof}

\begin{cor}\label{corhiggsbasechange}
Let $\hat{M} \in \rep_\gammak(\hatkinfty)$, then 
\begin{enumerate}
\item  $H^i(\gammak, \hat{M})$ is a finite dimensional $K$-vector space for $i \geq 0$, and vanishes when $i\geq 2$.
\item Let $K \into L$ be the field embedding as in Notation \ref{notakl}.
For each $i \geq 0$, the natural map
$$ H^i(\gammak, \hat{M})\otimes_K L \to H^i(\Gamma_L, \hat{M}\otimes_\hatkinfty \hatlinfty)$$
is an isomorphism.
\end{enumerate}
\end{cor}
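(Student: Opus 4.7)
The plan is to descend $\hat M$ to a finite-dimensional $K_m$-representation via Tate--Sen theory and then combine the pro-cyclic structure of $\gammak$ with flatness of $L/K$ and finite Galois descent.

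\emph{Step 1 (Descent).} By Cor.~\ref{corequivkinfty}, $\hat M \cong M \otimes_\kinfty \hatkinfty$ for some $M \in \rep_\gammak(\kinfty)$. Saturating a sufficiently large $\gammak$-stable finite-dimensional $K$-subspace of $M$ by $K_m$ (as in the proof of Prop.~\ref{thmfon24}), one obtains $V \in \rep_\gammak(K_m)$ with $V \otimes_{K_m} \kinfty \cong M$. By uniqueness of Tate--Sen descent applied over $L$, the $\linfty$-form of $\hat M \otimes_\hatkinfty \hatlinfty$ is $V \otimes_{K_m} \linfty$. Setting $V^{(m')} := V \otimes_{K_m} K_{m'}$ and $V^{L,(m')} := V \otimes_{K_m} L_{m'}$, Lem.~\ref{lemliuzhu310} (applied over both $K$ and $L$) gives, for $m'$ large,
\begin{equation*}
H^i(\gammak, \hat M) \cong H^i(\gammak, V^{(m')}), \qquad H^i(\Gamma_L, \hat M \otimes_\hatkinfty \hatlinfty) \cong H^i(\Gamma_L, V^{L,(m')}).
\end{equation*}

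\emph{Step 2 (Part (1)).} Let $\gammak'$ be an open pro-cyclic subgroup of $\gammak$ of finite index, with topological generator $\gamma_0$. The two-term complex $[V^{(m')} \xrightarrow{\gamma_0 - 1} V^{(m')}]$ computes $H^*(\gammak', V^{(m')})$, making this cohomology finite-dimensional over $K$ and vanishing in degree $\geq 2$. The Hochschild--Serre spectral sequence for $\gammak' \subset \gammak$, together with Lem.~\ref{lemvanishfincyc} applied to the finite quotient $\gammak/\gammak'$, transfers both properties to $H^*(\gammak, V^{(m')})$, proving (1).

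\emph{Step 3 (Part (2)).} A key observation is that $i: \Gamma_L \to \gammak$ is injective with open image: since $\linfty = L \cdot i(\kinfty)$ inside $\overline L$, any $\tau \in \ker(i)$ fixes all of $\linfty$ and hence is trivial, while $\im(i) = \gal(\kinfty/K_c)$ with $K_c := L \cap \kinfty$ a finite extension of $K$ (because the CDVF $L$ contains only finitely many $p$-power roots of unity). For $m'$ larger than the cyclotomic depth of $L/K$ we have $L_{m'} \cong K_{m'} \otimes_{K_c} L$. Restricting to open pro-cyclic subgroups $\gammak^{(m')}$ and $\Gamma_L^{(m')}$ that fix $K_{m'}$ resp.\ $L_{m'}$, the $K_{m'}$-linear (resp.\ $L_{m'}$-linear) action is locally analytic, so continuous cohomology agrees with Lie algebra cohomology and is computed by a Sen-type operator $\varphi \in \End_{K_{m'}}(V^{(m')})$ (resp.\ by $a\cdot(\varphi\otimes 1)$ on $V^{L,(m')}$, where $a \in \Qp^\times$ corresponds to $\Lie(i)$). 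Flatness of $L_{m'}/K_{m'}$ combined with $a \neq 0$ commutes the base change with $\ker$ and $\coker$ of $\varphi$; finite Galois descent for $L_{m'}/L$ and $K_{m'}/K$ (using $L_{m'} = K_{m'} \otimes_{K_c} L$) together with flatness of $L/K$ then yields the required isomorphism $H^i(\gammak, V^{(m')}) \otimes_K L \cong H^i(\Gamma_L, V^{L,(m')})$.

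The main obstacle will be Step 3: verifying that the Sen-type operators on $V^{(m')}$ and $V^{L,(m')}$ are related by a single nonzero $\Qp$-scalar via $\Lie(i)$, and executing the finite Galois descent cleanly across the field embedding $K_{m'} \hookrightarrow L_{m'}$. All individual ingredients (flatness, locally analytic $=$ Lie cohomology, vanishing of finite-group cohomology on $\Qp$-vector spaces) are standard, but the bookkeeping requires some care.
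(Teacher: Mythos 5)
Your Steps 1--2 match the paper: descend to $V^{(m')}$ via Cor.~\ref{corequivkinfty} and Prop.~\ref{thmfon24}, invoke Lem.~\ref{lemliuzhu310}, and read off finiteness and vanishing from the (essentially) two-term complex for a pro-cyclic quotient. Step 3 is correct in substance but takes a genuinely different route from the paper. The paper splits into cases: for $i=1$ it uses Hochschild--Serre plus Lem.~\ref{lemvanishfincyc} and then flat base change, while for $i=0$ it uses direct Galois descent along $K_s/K$, $L_s/L$ for $s$ large. You instead treat all $i$ uniformly by restricting to pro-cyclic open subgroups $\gammak^{(m')}\cong\Gamma_L^{(m')}$, identifying continuous cohomology with Lie-algebra cohomology via the Sen operator, base-changing $\ker/\mathrm{coker}$ along the flat $K_{m'}\to L_{m'}$, and then descending through the finite quotients. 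Two remarks. First, the Sen-operator detour is not needed: for a pro-cyclic $\Zp$-group acting on a finite-dimensional $K_{m'}$-vector space the cohomology is computed by the two-term complex $[\gamma_0-1]$, and flat base change along $K_{m'}\to L_{m'}$ applies directly to this complex; introducing $\exp/\log$ and local analyticity only adds conditions (convergence of $\exp$, choice of the scalar $a$) that you then have to verify. Second, your explicit observation that $i:\Gamma_L\to\gammak$ is injective with open image $\gal(\kinfty/K_c)$, where $K_c=L\cap\kinfty$ is finite over $K$, is a genuine and useful addition -- the paper uses this implicitly (``Fix some $s\gg 0$ so that the map $\Gamma_{L_s}\into\Gamma_{K_s}$ is an isomorphism'') but never records it; spelling it out, together with the identification $L_{m'}\cong K_{m'}\otimes_{K_c} L$, makes the finite Galois descent in the last step transparent and also exposes a small notational imprecision in the paper's $i=1$ argument (the intermediate base changes there should really be over $K_c$ rather than over $K$). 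Net assessment: a correct plan, somewhat over-engineered in the middle, but more explicit than the paper where it matters.
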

\begin{proof}
Consider Item (1).
By Cor. \ref{corequivkinfty}, there exists some $M \in \rep_\gammak(\kinfty)$ and hence some $M_{m_0} \in \rep_\gammak(K_{m_0})$ whose base change to $\hatkinfty$ is $\hat{M}$. By Lem. \ref{lemliuzhu310}, $H^i(\gammak, \hat{M}) =H^i(\gammak, M_{m_0})$ and hence is of finite $K$-dimension; they vanish for $i \geq 2$ since $\Gamma_K$ has cohomological dimension $1$.

Consider Item (2).
By Lem. \ref{lemliuzhu310}, it suffices to show for   $m \gg m_0$, the natural map
\begin{equation}\label{eqgakl}
H^i(\gammak, M_m)\otimes_K L \to  H^i(\Gamma_{L},  M_m \otimes_{K_m} L_m) =H^i(\Gamma_{L},  M_m \otimes_{K} L)
\end{equation}
is an isomorphism. 
Consider when $i=1$. Note the natural map $\Gamma_L \to \Gamma_K$ is an open embedding; thus Hochschild--Serre spectral sequence  implies
\[ H^1(\Gamma_L, M_m)\simeq H^1(\gammak, M_m)\]
since finite group cohomology (for $\Gamma_K/\Gamma_L$) on a $\qp$-vector space is concentrated in degree zero; cf. \cite[Cor. 16.5]{Hilton_Stammbach_homological_GTM_v2}.
Thus  flat base change implies:
\[H^1(\gammak, M_m)\otimes_K L \simeq H^1(\Gamma_{L},  M_m \otimes_{K} L).\] 
It remains to treat the case $i=0$. First,  for any $s\geq 0$, by Galois descent, we have
\begin{equation}\label{eqHiks}
H^0(\gammak, M_m)\otimes_K K_s \simeq H^0(\Gamma_{K_s},  M_m), \forall m \geq s.
\end{equation}
Now, fix some $s \gg 0$ so that the map $\Gamma_{L_s} \into \Gamma_{K_s}$ is an isomorphism.
To prove Eqn. \eqref{eqgakl} is an isomorphism, it suffices to show
$$H^0(\gammak, M_m)\otimes_K L \otimes_L L_s \simeq  H^0(\Gamma_{L},  M_m \otimes_{K_m} L_m)\otimes_L L_s, \forall m \gg s.$$
Using \eqref{eqHiks}, it is the same to show
$$H^0(\Gamma_{K_s},  M_m)\otimes_{K_s} L_s \simeq  H^0(\Gamma_{L_s},  M_m \otimes_{K_m} L_m),$$
which is
$$H^0(\Gamma_{K_s},  M_m)\otimes_{K } L  \simeq  H^0(\Gamma_{L_s},  M_m \otimes_{K } L ).$$
But the above holds by flat base change.
\end{proof}

%We prove this in two steps. Firstly, for any $s \geq 0$, let $K_s=K(\varepsilon_s)$, we claim that   When $i=0$, note $H^0(\Gamma_{K_s},  M_m)$ is a finite dimensional $K_s$-vector space with a $K_s$-\emph{semi-linear} action by $\gal(K_s/K)$, and hence equals to $H^0(\gammak, M_m)\otimes_K K_s$ by Galois descent. For $i=1$, via Hochschild-Serre spectral sequence and Galois descent argument (which we just used), it suffices to show tha\begin{equation}\label{H1galks}H^1(\gal(K_s/K), H^0(\Gamma_{K_s}, M_m))=0\end{equation}Note  $H^0(\Gamma_{K_s}, M_m)$ is a finite dimensional $K_s$-, hence $K$-vector space with a $K$-linear action by  $\gal(K_s/K)$. Now Eqn. \eqref{H1galks} follows by standard computations for the finite cyclic group  $\gal(K_s/K)$, cf. e.g. \cite[p. 58, Example 2, Exercise 1]{BrownCohoGp}. Indeed, one sees $H^1(\gal(K_s/K), H^0(\Gamma_{K_s}, M_m))$ is a $K$-vector space and is killed by the order of $\gal(K_s/K)$, and hence has to be zero.

%For Item (2), we have \begin{eqnarray*} H^i(\gammak, \hat{M})\otimes_K L &=& H^i(\gammak, M_m)\otimes_K L, \text{ by Lem. \ref{lemliuzhu310}, } \\
%&=& H^i(\gammak, M_m \otimes_K L), \text{ by flat base change,}\\ 
%&=& H^i(\Gamma_L, \hat{M}\otimes_\hatkinfty \hatlinfty) \text{ by Lem. \ref{lemliuzhu310} applied to } \Gamma_L.\end{eqnarray*}

\section{$p$-adic Simpson correspondence and Hodge--Tate rigidity}
\label{secSimpson}
In this section, we construct the $p$-adic Simpson correspondence together with its decompleted version, and use it to prove a Hodge--Tate rigidity theorem.
The following diagram summarizes the (base) fields and Galois groups that we use in this section.

 \begin{equation}
\begin{tikzcd}
\hatkinfty \arrow[rr]                                                                                                                 &  & \widehat{K_\infty^{\pf}} \arrow[rr]                         &  & C                      \\
                                                                                                                                      &  &                                                             &  &                        \\
\kinfty \arrow[rr, "\Gamma_{\mathrm{geom}}"] \arrow[rrrr, "\quad \quad \quad \quad \quad  \quad   G_{\kinfty}", bend left] \arrow[uu] &  & K_\infty^{(\pf)} \arrow[rr, "H_{\underline{t}}"] \arrow[uu] &  & \overline K \arrow[uu] \\
                                                                                                                                      &  &                                                             &  &                        \\
K \arrow[uu, "\Gamma_K"] \arrow[rruu, "\Gamma"] \arrow[rrrruu, "G_K"']                                                                &  &                                                             &  &                       
\end{tikzcd}
 \end{equation}

 \subsection{$p$-adic Simpson correspondence in the imperfect residue field case}

 %The following is the the $p$-adic Simpson correspondence that we will construct.
 
 \begin{defn}
 Let $\mathrm{Higgs}_\gammak(\hatkinfty)$ be  the category where an object is some $M \in \rep_{\Gamma_K}(\hatkinfty)$ equipped with a $\gammak$-equivariant Higgs field:
$$\theta_M:  M \to M \otimes_K \wh{\Omega}_K(-1).$$
 \end{defn}

\begin{theorem}\label{thmSimpson}
For $ W\in \rep_{G_K}(C)$, define 
$$  \calH(W):= (W\otimes_C \calO C)^{G_{\kinfty}}$$
\begin{enumerate}
\item The above rule defines a functor 
$$\calH: \rep_{G_K}(C) \to  \rep_{\Gamma_K}(\hatkinfty).$$
Via the equivalence in Cor. \ref{corequivkinfty}, we can define a   functor 
$$\mathrm{H}:   \rep_{G_K}(C) \to  \rep_{\Gamma_K}(\kinfty). $$

\item  Using the field $L$, we can similarly define functors 
\begin{align*}
\calH_L: & \rep_{G_L}(C_L) \to \rep_{\Gamma_L} (\widehat{L_\infty}) \\
\mathrm{H}_L: & \rep_{G_L}(C_L) \to \rep_{\Gamma_L} ({L_\infty}).
\end{align*}
   Note that for $W \in \rep_{G_K}(C)$, the base change  $W\otimes_C C_L$ can be regarded as an object in  $\rep_{G_L}(C_L)$. With these notations, we have the following canonical ``base change" isomorphisms:
   \begin{align*}
   \calH(W) \otimes_{\hatkinfty} \widehat{L_\infty} & \simeq \calH_L(W\otimes_C C_L), \\
  \mathrm{H}(W) \otimes_{\kinfty} L_\infty  &\simeq \mathrm{H}_L( W\otimes_C C_L).
   \end{align*}

%These functors satisfy base change properties with respect to the embedding $K \into \bbk$, in the sense that we have canonical isomorphisms
%$$\calH(W) \otimes_{\hatkinfty} \hatbbkinfty =\calH_\bbk(W|_{G_\bbk}), \quad  H(W) \otimes_{\kinfty} \bbkinfty = G_{\bbkinfty}(W|_{G_\bbk})$$ 

\item The functor $\calH$ can be upgraded to a functor 
$$\rep_{G_K}(C) \to \mathrm{Higgs}_\gammak(\hatkinfty),$$
in the sense its composite with the forgetful functor $ \mathrm{Higgs}_\gammak(\hatkinfty) \to \rep_{\Gamma_K}(\hatkinfty)$  recovers $\calH$.

\item The functors $\calH$ (as well as its upgraded version) and $\mathrm{H}$ are tensor functors and are compatible with duality.

\end{enumerate} 
\end{theorem}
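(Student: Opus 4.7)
The plan is to construct $\calH$ by combining Brinon's Sen theory with the explicit coordinate structure of $\coc$, following the general philosophy of Liu-Zhu. Given $W \in \rep_{G_K}(C)$, apply Prop. \ref{thmBrinonSen} to obtain $M = W^{H_{\underline{t}}} \in \rep_\Gamma(\widehat{K_\infty^\pf})$ satisfying $W \simeq M \otimes_{\widehat{K_\infty^\pf}} C$, and descend further to $M^\circ \in \rep_\Gamma(K_\infty^{(\pf)})$ via Prop. \ref{thmBrinonSen}(3); by Prop. \ref{thmnilpotent}, the Lie algebra $\mathrm{Lie}\,\Gamma_\geom$ acts on $M^\circ$ by commuting nilpotent operators $\mu_1, \ldots, \mu_d$.

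The core computation exploits the structure $\coc \simeq C[V_1, \ldots, V_d]$ from Notation \ref{notaVi}, together with two key observations: each $V_i$ is fixed by $H_{\underline{t}}$ (because $H_{\underline{t}}$ fixes both $t_i^{1/p^n}$ and $\varepsilon_n$), and the topological generators $\tau_j$ of $\Gamma_\geom$ satisfy $\tau_j(V_i) = V_i + \delta_{ij}$. These let one compute $(W \otimes_C \coc)^{G_\kinfty}$ by first taking $H_{\underline{t}}$-invariants, obtaining $M \otimes_{\widehat{K_\infty^\pf}} \widehat{K_\infty^\pf}[V_1, \ldots, V_d]$, and then $\Gamma_\geom$-invariants. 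Using the nilpotent $\mu_i$'s and a Grothendieck-style ``exponential twist'' of the form
\begin{equation*}
m \mapsto \sum_{\mathbf k \in \mathbb{N}^d} \tfrac{(-V)^{\mathbf k}}{\mathbf k!} \mu^{\mathbf k}(m)
\end{equation*}
(a finite sum by nilpotency), one extracts from $M^\circ$ a canonical $\Gamma_K$-stable $\kinfty$-subspace $M_1$ of rank $\dim_C W$, yielding $\calH(W) \simeq M_1 \otimes_\kinfty \hatkinfty$ in $\rep_{\Gamma_K}(\hatkinfty)$; setting $\mathrm{H}(W) := M_1$ provides the decompleted version, well-defined via Cor. \ref{corequivkinfty}. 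Although the intermediate descent depends on the coordinates $t_i$, the intrinsic nature of the left-hand side $(W \otimes_C \coc)^{G_\kinfty}$ guarantees that $\calH(W)$ itself is canonical.

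For Part (2), the embedding $i: K \into L$ induces compatible maps $C \to C_L$, $\coc \to \mathcal{O}C_L$, and $G_{L_\infty} \to G_{\kinfty}$, and the base-change isomorphism $\calH(W) \otimes_\hatkinfty \hatlinfty \simeq \calH_L(W \otimes_C C_L)$ follows by performing the same polynomial-coordinate computation over $L$ (after extending the $p$-basis of $K$ to one for $L$) combined with flat base change. For Part (3), the Higgs field on $\calH(W)$ is induced by the intrinsic operator $\gr^0 \nabla: \coc \to \coc \otimes_K \wh{\Omega}_K(-1)$ of Construction \ref{notagrzero}: since $\gr^0 \nabla$ is $C$-linear and $G_K$-equivariant, it descends to a $\Gamma_K$-equivariant map $\theta_W: \calH(W) \to \calH(W) \otimes_K \wh{\Omega}_K(-1)$, with $\theta_W \wedge \theta_W = 0$ following from integrability of $\nabla$. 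For Part (4), tensor compatibility $\calH(W_1 \otimes_C W_2) \simeq \calH(W_1) \otimes_\hatkinfty \calH(W_2)$ and the analogous duality follow on generators via the explicit coordinate description combined with the ring structure on $\coc$.

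The main obstacle is showing rigorously that the $\Gamma_\geom$-invariants of the polynomial module $M^\circ[V_1, \ldots, V_d]$ yield a free $\kinfty$-submodule $M_1$ of exactly rank $\dim_C W$. Two subtleties arise: first, the exponential formula $\tau_j|_{M^\circ} = \exp(\mu_j)$ from Prop. \ref{thmnilpotent}(2) only holds on an open subgroup $H(m)$ depending on $m$, so one must work with sufficiently high powers of the $\tau_j$ or restrict to a suitable sublattice of locally analytic vectors before the twist formula becomes globally valid; second, one must rule out stray invariants via a Koszul-type argument using the nilpotency and commutativity of the $\mu_i$'s. A secondary delicate point is verifying that the final object is genuinely independent of the coordinate choices $t_i$ — while the definition via $(W \otimes_C \coc)^{G_\kinfty}$ makes this manifest, reconciling it explicitly with the coordinate-dependent $M_1$-model requires care.
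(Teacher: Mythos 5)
Your high-level strategy --- Brinon's Sen theory to descend to $M^\circ$ over $K_\infty^{(\pf)}$, then extract a $\Gamma_\geom$-unipotent part using the explicit presentation $\coc \cong C[V_1, \ldots, V_d]$, then base-change and argue as in Liu--Zhu --- is exactly the paper's strategy. However, your specific device (the Grothendieck-style exponential twist applied directly to all of $M^\circ$) has gaps that the paper fills by a somewhat different route, and your proposed remedies do not quite do the job.

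First, the twist $m \mapsto \sum_{\mathbf k} \frac{(-V)^{\mathbf k}}{\mathbf k!}\mu^{\mathbf k}(m)$ only yields genuine $\Gamma_\geom$-invariants when $m$ lies in the $\Gamma_\geom$-unipotent generalized eigenspace: if $\tau_j(m) = \zeta\exp(\mu_j)(m)$ with $\zeta$ a nontrivial root of unity, the twisted element picks up the factor $\zeta$ and is not invariant. Your suggestion to ``work with sufficiently high powers of $\tau_j$'' cannot repair this, because the obstruction is an honest eigenvalue, not a convergence or analyticity issue. The paper's fix (Step 1 of Thm.~\ref{thm514}, after [LZ17, Lem.~2.15]) is to descend $M^\circ$ to a finite level $M_m$ over $K_m(t_1^{1/p^m}, \ldots, t_d^{1/p^m})$ (possible since $\Gamma$ is topologically finitely generated), decompose $M_m = \bigoplus_\tau M_{m,\tau}$ into generalized eigenspaces for the finite-order characters $\tau$ of $\Gamma_\geom$, and set $H_m := M_{m,\mathbf 1}$. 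Crucially, the substantive content of that lemma is the $\Gamma$-equivariant isomorphism $H_m \otimes_{K_m} K_m(t_1^{1/p^m},\ldots) \simeq M_m$, which is precisely what yields $\rank_{K_m} H_m = \dim_C W$. This is the rank claim you flag as the main obstacle; it does not follow from the twist formula alone, and the ``Koszul-type argument'' you allude to is realized in the paper as the filtered/graded cohomological computation in Step 2 of Thm.~\ref{thm514} (via [LZ17, Lem.~2.10]), which shows simultaneously that $H^0(\Gamma_\geom, \calH\otimes\hatkinfty[V_1,\ldots,V_d]) = \calH$ and that $H^{>0}$ vanishes, so there are no stray invariants.

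A secondary point you did not address: identifying the $\Gamma_K$-module structure on $\calH(W)$ is not automatic, since $\Gamma_\geom$ acts on $H_m$ nontrivially (unipotently), so one cannot simply take the quotient action. The paper resolves this in Rem.~\ref{remGalequiv} by exploiting the splitting $\Gamma \simeq \Gamma_{K^{(\pf)}} \ltimes \Gamma_\geom$ and noting that the projection $\calH\otimes\hatkinfty[V_1,\ldots,V_d] \to \calH$ is $\Gamma_{K^{(\pf)}}$-equivariant because $g(V_i)=\chi_p(g)^{-1}V_i$ for $g\in\Gamma_{K^{(\pf)}}$. Your treatment of Parts (2)--(4) is essentially in agreement with the paper.
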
 
 
 \begin{rem}
 By abuse of terminology, we call both $\calH$ and its upgraded version the $p$-adic Simpson correspondence. (Indeed, the Higgs field on $\calH(W)$ does not play any further role in this paper.) We call the functor $\mathrm{H}$ the decompleted  $p$-adic Simpson correspondence.
 \end{rem}
 
 \begin{proof}
 Item (1) will be proved in Thm. \ref{thm514}, where we also prove some cohomology vanishing results for later use; Items (2)-(4) are formal consequences of   Item (1), and will be proved after Thm. \ref{thm514}. 

Let us now sketch the ideas of the proof of Item (1), which mimics the strategy of \cite{LZ17}.
 The main content of the  proof can be summarized in the following diagram
 \begin{equation}
\begin{tikzcd}
\rep_{G_K}(C) \arrow[rr, "\simeq"] &  & \rep_{\Gamma}(\widehat{K^\pf_\infty}) \arrow[rr, "\simeq"] \arrow[dd, "(\cdot)^{\Gamma_{\geom}-\text{unip}}"'] &  & \rep_{\Gamma}({K^{(\pf)}_\infty}) \arrow[dd, "(\cdot)^{\Gamma_{\geom}-\text{unip}}"'] \\
                                   &  &                                                                                                               &  &                                                                                       \\
                                   &  & \rep_{\Gamma_K}(\hatkinfty) \arrow[rr, "\simeq"]                                                              &  & \rep_{\Gamma_K}(\kinfty)                                                            
\end{tikzcd}
\end{equation}
 Here, the top horizontal equivalences are due to Brinon as in Thm. \ref{thmBrinonSen}. The vertical functors are defined by ``taking unipotent part under the $\Gamma_\geom$-action". A natural question  here  is that if the composite functor $\rep_{G_K}(C) \to \rep_{\Gamma_K}(\hatkinfty)$ depends on the choices of $t_i$: indeed it does not, as answered in Step 2 in the proof of Thm. \ref{thm514}.
 \end{proof}

We start with a lemma.
\begin{lemma}\label{lemSimpcoho}
Let $W \in \rep_{G_K}(C)$. 
\begin{enumerate}
\item   $H^i(\Ht, W\otimes_C \coc)=0$ for $i \geq 1$.
\item    $H^i(G_{\kinfty}, W\otimes_C \coc) =H^i(\Gamma_\geom, (W\otimes_C \coc)^\Ht)$ for $i \geq 0$.
\end{enumerate}
\end{lemma}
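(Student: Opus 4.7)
The plan is to reduce both parts to the vanishing of $H^i(\Ht, W)$, which will follow from combining Brinon's Sen theory with Scholze's almost vanishing for perfectoid fields.

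For Item (1), the first step is to exploit the isomorphism $\coc \simeq C[V_1,\dots,V_d]$ from Prop.~\ref{propucoor}(2) and Notation~\ref{notaVi}, together with the key observation that each $V_i$ is fixed by $\Ht = G_{K^{(\pf)}_\infty}$. Indeed, the compatible system $(t_i^{1/p^m})_{m \geq 0}$ lies in $K^{(\pf)} \subset K^{(\pf)}_\infty$, so $\Ht$ fixes $[t_i^\flat]$; the inclusion $\Ht \subset G_{\kinfty}$ implies $\Ht$ fixes the cyclotomic element $t$; and $t_i \in K$ is trivially fixed. Consequently the $\Ht$-action on $\coc$ is ``coefficient-wise'' on $C$, and with respect to the natural $\mathbb{N}$-grading $\coc = \bigoplus_{n\geq 0} \coc_n$, each $\coc_n$ is a finite free $C$-module on which $\Ht$ acts only through the action on $C$.

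Under this decomposition, each $W \otimes_C \coc_n$ is a finite direct sum of copies of $W$ as $\Ht$-representations, so it suffices to establish $H^i(\Ht, W) = 0$ for $i \geq 1$. By Brinon's Sen theory (Prop.~\ref{thmBrinonSen}), $W \simeq W^{\Ht} \otimes_{\widehat{K_\infty^\pf}} C$ with trivial $\Ht$-action on $W^\Ht$, hence
\begin{equation*}
H^i(\Ht, W) \simeq W^{\Ht} \otimes_{\widehat{K_\infty^\pf}} H^i(\Ht, C).
\end{equation*}
Since $\Ht$ is the absolute Galois group of the perfectoid field $\widehat{K_\infty^\pf}$, Scholze's (almost) vanishing theorem, applied exactly as in Cor.~\ref{thmalmostvanish}(1), gives $H^i(\Ht, C) = 0$ for $i \geq 1$, yielding Item (1). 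For Item (2) we then apply the Hochschild--Serre spectral sequence attached to the short exact sequence $1 \to \Ht \to G_{\kinfty} \to \Gamma_\geom \to 1$ from Notation~\ref{notaGammagp}:
\begin{equation*}
E_2^{i,j} = H^i(\Gamma_\geom, H^j(\Ht, W \otimes_C \coc)) \Longrightarrow H^{i+j}(G_{\kinfty}, W \otimes_C \coc).
\end{equation*}
By Item (1), the $E_2$-page is concentrated in the row $j=0$, so the spectral sequence degenerates into the claimed isomorphism.

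The step requiring the most care is the commutation of continuous $\Ht$-cohomology with the grading decomposition of $\coc$, since $\coc$ is not obviously a discrete $\Ht$-module. The clean way to handle this is to work first with the finite-rank truncations $\coc_{\leq N} := \bigoplus_{n \leq N} \coc_n$, for which $W \otimes_C \coc_{\leq N}$ is a finite direct sum of copies of $W$ with compatible $\Ht$-action (since the $V_i$ are honestly $\Ht$-invariant), establishing the vanishing on each level; one then passes to the colimit along closed inclusions, using that the transition maps are strict and $\Ht$-equivariant. Modulo this formal verification, the rest of the argument is immediate.
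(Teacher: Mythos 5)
Your argument is correct and follows the same route as the paper's proof: the paper likewise exploits $\coc\simeq C[V_1,\dots,V_d]$ with $\Ht$ acting trivially on the $V_i$ to reduce Item (1) to $H^i(\Ht,W)=0$ (which the paper cites directly as Cor.~\ref{thmalmostvanish}, whereas you re-derive it from Prop.~\ref{thmBrinonSen} plus Scholze vanishing --- the same ingredients), and then obtains Item (2) from Hochschild--Serre. Your extra care about passing from the finite-rank truncations $\coc_{\leq N}$ to the colimit is a real (if routine) point that the paper leaves implicit --- it addresses it only later, in the proof of Thm.~\ref{thm514}, via the degree filtration --- and your treatment of it is sound.
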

\begin{proof}
 For Item (1), note that $\coc =C[V_1, \cdots, V_d]$ and  $\Ht$ acts on $V_i$ trivially, hence   it suffices to show $H^i(\Ht, W)=0$. This follows from Cor. \ref{thmalmostvanish} since  $\Ht \simeq G_{K^\pf_\infty}$ (and $\kpf$ has perfect residue field).  Item (2) now follows by Hochschild--Serre spectral sequence.
\end{proof}
 % These are analogues of \cite[Prop. 2.3]{LZ17} and \cite[Lem. 2.7]{LZ17}, and the proofs are similar.

\begin{theorem}\label{thm514}
Let $W \in \rep_{G_K}(C)$. Then $\calH(W): =(W\otimes_C \calO C)^{G_{\kinfty}}$ is an object in $ \rep_{\Gamma_K}(\hatkinfty)$ and the natural $G_K$-equivariant map
\begin{equation} \label{eqchcoc}
\calH(W)\otimes_\hatkinfty \coc \to W\otimes_C \coc
\end{equation} 
is an isomorphism.
In addition,
\begin{equation}
H^i(G_{\kinfty}, W\otimes_C \calO C) =0, \forall   i \geq 1.
\end{equation}  
%  and there is a canonical isomorphism $$W\otimes_C \calO C \simeq \calH(W)\otimes_{\hatkinfty} \calO C.$$. 
\end{theorem}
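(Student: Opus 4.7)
The plan is to follow the strategy sketched after the statement: combine Brinon's Sen theory (Prop.~\ref{thmBrinonSen}) with the nilpotence of the geometric Lie algebra action (Prop.~\ref{thmnilpotent}), and then compute Galois cohomology via a Hochschild--Serre reduction to $\Gamma_\geom$. First, apply Brinon's Thm.~\ref{thmBrinonSen} to obtain $N := W^{\Ht} \in \rep_\Gamma(\widehat{K_\infty^\pf})$ satisfying $N \otimes_{\widehat{K_\infty^\pf}} C \simeq W$. By Prop.~\ref{thmnilpotent}, $\Gamma_\geom$ acts locally analytically on $N$ through \emph{nilpotent} Lie algebra operators $\mu_1,\ldots,\mu_d$ which commute (as $\Gamma_\geom$ is commutative). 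Next, invoke Lem.~\ref{lemSimpcoho}: $H^i(\Ht, W\otimes_C \coc)=0$ for $i \geq 1$ and the Hochschild--Serre spectral sequence degenerates to
\[
H^i(G_{\kinfty}, W\otimes_C \coc) \;\simeq\; H^i\bigl(\Gamma_\geom, (W\otimes_C \coc)^{\Ht}\bigr).
\]
Using $\coc \simeq C[V_1,\ldots,V_d]$ from Notation~\ref{notaVi} and the fact that $\Ht$ fixes each $V_i$ (since $t_i^{1/p^m}\in K_\infty^{(\pf)}$), identify the $\Ht$-invariants with $N[V_1,\ldots,V_d] := N \otimes_{\widehat{K_\infty^\pf}} \widehat{K_\infty^\pf}[V_1,\ldots,V_d]$, on which $\Gamma_\geom$ acts semi-linearly on $\widehat{K_\infty^\pf}$, by the translation $\tau_j(V_i) = V_i + \delta_{ij}$ on the polynomial variables, and through the nilpotent $\mu_i$'s on $N$.

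Introduce the $\widehat{K_\infty^\pf}$-linear ``untwisting'' map
\[
\Psi : N \;\longrightarrow\; N[V_1,\ldots,V_d], \qquad \Psi(n) := \exp\!\Bigl(-\sum_{i=1}^d V_i\,\mu_i\Bigr) n,
\]
which is well-defined as a finite sum since the pairwise commuting $\mu_i$'s are nilpotent. A direct computation using $\tau_j = \exp(\mu_j)$ on $N$ (which holds globally on $N$ because the $\mu_i$ are nilpotent, so the locally analytic formula of Prop.~\ref{thmnilpotent}(2) exponentiates from the Lie algebra to all of $\Gamma_\geom$) shows $\tau_j\bigl(\Psi(n)\bigr) = \Psi(n)$. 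Extending $\Psi$ to a $\widehat{K_\infty^\pf}[V_1,\ldots,V_d]$-linear automorphism of $N[V_1,\ldots,V_d]$, and choosing a $\widehat{K_\infty^\pf}$-basis $e_1,\ldots,e_r$ of $N$, one obtains a new basis $\tilde e_\alpha := \Psi(e_\alpha)$ in which the $\Gamma_\geom$-action is purely through the coefficient ring: the frame $\{\tilde e_\alpha\}$ is pointwise fixed. Thus
\[
H^i\bigl(\Gamma_\geom, N[V_1,\ldots,V_d]\bigr) \;\simeq\; H^i\bigl(\Gamma_\geom, \widehat{K_\infty^\pf}[V_1,\ldots,V_d]\bigr)^{\oplus r}.
\]

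Finally, compute $H^*(\Gamma_\geom, \widehat{K_\infty^\pf}[V_1,\ldots,V_d])$ via the Koszul complex for $\Gamma_\geom \simeq \Zp^d$ using the generators $\tau_j - 1$. Decompose $\widehat{K_\infty^\pf} = \hatkinfty \oplus \ker(t)$ using Tate's normalized traces; the action of $\tau_j - 1$ preserves this splitting. On the $\hatkinfty$-summand the action is purely shift on $V_j$, and discrete integration $V_j^k \mapsto \frac{V_j^{k+1}}{k+1}$ (combined with lower-order correction) inverts $\tau_j - 1$ on positive-degree polynomials, giving $H^0 = \hatkinfty$ and $H^{\geq 1}=0$; on the $\ker(t)$-summand $\tau_j - 1$ is already bijective on scalars, and a filtration by $V$-degree argument shows it remains bijective on polynomials, giving vanishing in all degrees. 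Combining these yields both the identification $\calH(W) \simeq \bigoplus_\alpha \hatkinfty \cdot \tilde e_\alpha$ (free of rank $\dim_C W$ over $\hatkinfty$, with $\gammak$-action inherited from the $\Gamma/\Gamma_\geom$-structure on $N$) and the vanishing $H^{\geq 1}(G_\kinfty, W\otimes_C\coc)=0$; the base-change isomorphism $\calH(W)\otimes_\hatkinfty \coc \simeq W\otimes_C \coc$ drops out since, in the $\Psi$-trivialized frame, both sides are free $\widehat{K_\infty^\pf}[V_1,\ldots,V_d]$-modules on $\{\tilde e_\alpha\}$. The main obstacle is the Koszul computation in the last step: managing the interaction between the semi-linear Galois action on $\widehat{K_\infty^\pf}$ and the additive shift on the $V_i$'s, and verifying that the $V_i$'s genuinely ``kill'' the otherwise non-trivial Hodge--Tate style classes in $H^{\geq 1}(\Gamma_\geom, \widehat{K_\infty^\pf})$. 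A secondary subtlety, though not strictly part of the statement, is coordinate-independence of the construction: this is automatic because the output is phrased in terms of the intrinsic ring $\coc$ rather than via the choice-dependent Brinon decomposition.
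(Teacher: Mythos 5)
Your proposal follows the same broad framework as the paper --- Brinon's Sen theory, the nilpotent Lie-algebra operators $\mu_i$, and the Hochschild--Serre reduction from $G_\kinfty$-cohomology to $\Gamma_\geom$-cohomology --- but the central "untwisting" step contains a genuine gap. You assert that $\tau_j = \exp(\mu_j)$ holds \emph{globally} on $N$, justified by nilpotence of $\mu_j$. This does not follow. Prop.~\ref{thmnilpotent}(2) only asserts $g(y) = \exp(\sum e_i\mu_i)y$ for $g$ in a $y$-dependent open subgroup $H(y)\subset\Gamma_\geom$; the $\Gamma_\geom$-action is merely \emph{quasi-unipotent}, so one may have $\tau_j = \zeta\cdot\exp(\mu_j)$ with $\zeta$ a nontrivial finite-order semisimple part. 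Nilpotence of $\mu_j$ carries no information about $\zeta$: knowing $\tau_j^{p^k} = \exp(p^k\mu_j)$ for $k\gg 0$ only determines $\tau_j$ up to a $p^k$-th root of unity in the semisimple factor. Consequently your computation gives $\tau_j(\Psi(n)) = \Psi\bigl(\exp(-\mu_j)\tau_j(n)\bigr)$, where $\exp(-\mu_j)\tau_j$ is in general a nontrivial finite-order automorphism. Your frame $\{\tilde e_\alpha\}$ is therefore only stabilized by an open subgroup of $\Gamma_\geom$, and the claimed reduction to $H^i(\Gamma_\geom,\widehat{K^\pf_\infty}[V_1,\ldots,V_d])^{\oplus r}$ fails. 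The paper handles exactly this point by first descending $M$ to a finite level $M_m$, decomposing $M_m = \bigoplus_\tau M_{m,\tau}$ over finite-order characters $\tau$ of $\Gamma_\geom$ \`a la Liu--Zhu, and extracting the unipotent piece $H_m = M_{m,\mathbf{1}}$; the module $\calH = H_m\otimes_{K_m}\hatkinfty$ is the correctly normalized replacement for your $\{\tilde e_\alpha\}$. If you prepend to $\Psi$ the $t_i^{a/p^N}$-twist that passes to the unipotent part, your exponential untwist becomes valid.

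Two secondary issues. First, your Koszul computation of $H^*(\Gamma_\geom,\widehat{K^\pf_\infty}[V_1,\ldots,V_d])$ via "normalized traces" for the $\Gamma_\geom$-direction is not established in the paper (the Tate normalized traces in \S\ref{subsec43} are for $\Gamma_K$, not $\Gamma_\geom$); the paper instead reduces first to $\hatkinfty$-coefficients via the quasi-unipotent decomposition and a polynomial-degree filtration, then invokes \cite[Lem.~2.10]{LZ17}. Your route may well be feasible but would require you to actually supply the Tate-Sen axioms for $\Gamma_\geom \simeq \Zp^d$. Second, the $G_K$-equivariance of $\calH(W)\otimes_\hatkinfty\coc\simeq W\otimes_C\coc$ does not "drop out" automatically: since $\Gamma_\geom$ acts non-trivially on the untwisted frame but trivially on $\calH(W)$, there is a real bookkeeping issue (resolved in the paper via the splitting $\Gamma_{K^{(\pf)}}\hookrightarrow\Gamma$, cf.\ Rem.~\ref{remGalequiv}) that your sketch passes over.
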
 
 \begin{proof}
% In the proof, we omit certain ``Galois equivariance" issues which will be discussed in detail in Rem. \ref{remGalequiv}  (they cause no logical problem, but could be confusing).
 
  In Step 1, we first construct a $\hatkinfty$-vector space $\calH$ (of correct dimension). In Step 2, we show it is isomorphic to the desired $\calH(W)$  \emph{as a vector space}; note that $\calH(W)$ is automatically equipped with a $\gammak$-action by definition.

 \textbf{Step 1:}
  Let  
  $$\cm \in \rep_{\Gamma}({\widehat{K^\pf_\infty}}), \quad  \text{ resp.} \quad M \in  \rep_{\Gamma}({{K^{(\pf)}_\infty}} )$$
   be the  objects corresponding to $W$ via Prop. \ref{thmBrinonSen}.
Recall ${{K^{(\pf)}_\infty}}= K(1^{\frac{1}{p^\infty}}, t_1^{\frac{1}{p^\infty}}, \cdots, t_d^{\frac{1}{p^\infty}}) $. Since $\Gamma$ is finitely generated, there exists some $m\gg 0$ such that there exists some $M_m \in \rep_{\Gamma}(K(1^{\frac{1}{p^m}}, t_1^{\frac{1}{p^m}}, \cdots, t_d^{\frac{1}{p^m}}))$ such that its base change   to  ${{K^{(\pf)}_\infty}}$ is $M$.

By Prop. \ref{thmnilpotent}(3), the $K$-linear $\Gamma_\geom$-action on $M_m$ is \emph{quasi-unipotent}.
Hence by \emph{exactly the same argument} as below \cite[Lem. 2.15]{LZ17},  we have a decomposition
\begin{equation}\label{eq215lz}
M_m =\bigoplus_\tau M_{m, \tau}
\end{equation}
where $\tau$ runs through characters of $\Gamma_\geom$ of finite order, and  
$$M_{m, \tau} =\{ m\in M_m | \quad \exists N \gg 0, (\gamma -\tau(\gamma))^Nm=0, \forall \gamma \in \Gamma_\geom \} $$
 is the corresponding generalized eigenspace.
 Now \emph{exactly the same argument} as in \emph{ibid.} shows that if we let $H_m := M_{m, \mathbf{1}}$ be the unipotent part (where $\mathbf{1}$ denotes the trivial character), then $H_m$ is a finite dimensional $K_m$-vector space stable under $\Gamma$-action such that there is a $\Gamma$-equivariant isomorphism
 $$H_m \otimes_{K_m}   K(1^{\frac{1}{p^m}}, t_1^{\frac{1}{p^m}}, \cdots, t_d^{\frac{1}{p^m}}) \simeq M_m.$$ 
Define 
$$\calH: =H_m \otimes_{K_m} \hatkinfty.$$
By construction, there is a $\Gamma$-equivariant isomorphism
 \begin{equation}
 \calH \otimes_\hatkinfty {\widehat{K^\pf_\infty}} \simeq \cm,
 \end{equation}
 and hence there is a
  $G_K$-equivariant   isomorphism
 \begin{equation} \label{eqcalhW}
 \calH \otimes_\hatkinfty C \simeq W.
 \end{equation}

%Note that so far, we only consider the $\Gamma$-action on $\calH$, which is enough for our purpose; cf. Rem. \ref{remGalequiv} for other subtle issues.
 
 %(which in particular shows the above constructed object $\calH$ is independent  of the coordinates). 

\textbf{Step 2.} In order to finish the proof, it suffices to prove there are $\hatkinfty$-linear isomorphisms
\begin{equation} \label{eqstep2}
H^i(G_{\kinfty}, W\otimes_C \calO C) \simeq \begin{cases} \calH  & i=0 \\ 0 & i>0,\end{cases}
\end{equation} 

%The proof of these isomorphisms follow the same path as \cite[Lem. 2.9]{LZ17}.  (Note as mentioned in the end of Step 1, we now only consider $\Gamma$-action on $\calH$, hence  we cannot talk about  Galois equivariances in the above isomorphism.)
 
  First note by Lem. \ref{lemSimpcoho}, 
\begin{equation*}\label{eqapplemsim}
H^i(G_{\kinfty}, W\otimes_C \calO C) \simeq 
H^i(\Gamma_\geom, \cm \otimes_{{\widehat{K^\pf_\infty}}} {\widehat{K^\pf_\infty}}[V_1, \cdots, V_d]), \quad \forall i \geq 0.
\end{equation*}
We claim the $\Gamma$-equivariant hence $\Gamma_\geom$-equivariant inclusion $\calH \into \cm$ induces an isomorphism
\begin{equation}\label{eqmandh}
H^i(\Gamma_\geom, \calH \otimes_{\hatkinfty} \hatkinfty[V_1, \cdots, V_d]) 
\simeq H^i(\Gamma_\geom, \cm \otimes_{{\widehat{K^\pf_\infty}}} {\widehat{K^\pf_\infty}}[V_1, \cdots, V_d]), 
\quad \forall i\geq 0. 
\end{equation}
The claim follows from similar argument in the paragraph below \cite[Lem. 6.17]{Sch13}. Indeed, the increasing $\mathbb{Z}^{\geq 0}$- filtration on ${\widehat{K^\pf_\infty}}[V_1, \cdots, V_d]$ (resp. on $\hatkinfty[V_1, \cdots, V_d]$) defined by polynomial degree is $\Gamma_\geom$-stable, and hence to prove \eqref{eqmandh}, it suffices to consider the graded pieces with respect to above filtrations. In both gradeds, the action of $\Gamma_\geom$ on  the $V_i$'s is trivial, and hence it reduces to show that 
\begin{equation}\label{eqhmpiece}
H^i(\Gamma_\geom, \calH) \simeq H^i(\Gamma_\geom, \cm), \quad \forall i \geq 0.
\end{equation}
Note \eqref{eq215lz} induces a $\Gamma_\geom$-equivariant  decomposition
\begin{equation*}
\cm =\calH \oplus (\bigoplus_{\tau\neq \mathbf 1} \cm_\tau)
\end{equation*}
Note for each $\tau\neq \mathbf 1$, there exists some $i$ such that $\gamma_i-1$ acts on $ \cm_\tau$ bijectively (by considering the eigenvalues); here $\gamma_i$ is a topological generator of a copy of $\zp(1)$ in $\Gamma_\geom \simeq \zp(1)^d$. This gives \eqref{eqhmpiece} and hence \eqref{eqmandh}.
Finally, using the projection map (modulo the $V_i$'s)
$$\calH \otimes_{\hatkinfty} \hatkinfty[V_1, \cdots, V_d] \to \calH,$$
and inductively applying \cite[Lem. 2.10]{LZ17}, we have
\begin{equation}\label{eqLZ210}
H^i(\Gamma_\geom, \calH \otimes_{\hatkinfty} \hatkinfty[V_1, \cdots, V_d]) \simeq \begin{cases} \calH  & i=0 \\ 0 & i>0.\end{cases}
\end{equation} 
This in particular shows there is a $\Gamma$-equivariant isomorphism
\begin{equation}\label{eqcompahavi}
\calH(W)\otimes_\hatkinfty \hatkinfty[V_1, \cdots, V_d] \simeq \calH \otimes_\hatkinfty \hatkinfty[V_1, \cdots, V_d].
\end{equation}
Via the surjection $G_K \onto \Gamma$  and using Eqn. \eqref{eqcalhW}, we have   $G_K$-equivariant isomorphisms
$$\calH(W)\otimes_\hatkinfty \coc \simeq \calH \otimes_\hatkinfty \coc \simeq   W\otimes_C \coc.$$

%Note that $\Gamma_\geom$-action is trivial on $\calH(W)$  but not on $\calH$.
\end{proof}

\begin{proof}[\textbf{Proof of Thm. \ref{thmSimpson}(2-4)}]
We now prove remaining parts of  Thm. \ref{thmSimpson}.

Item (2): by Thm. \ref{thm514}, there is a $G_K$-equivariant isomorphism 
$$W\otimes_C \coc \simeq \calH(W)\otimes_{\hatkinfty} \coc$$
Hence we can obtain a $G_L$-equivariant isomorphism 
$$W\otimes_C \coc \otimes_\coc \mathcal{O}C_L \simeq \calH(W)\otimes_{\hatkinfty} \coc \otimes_\coc \mathcal{O}C_L,$$
which, after re-organization becomes
$$ (W\otimes_C C_L) \otimes_{C_L} \mathcal{O}C_L \simeq  (\calH(W)\otimes_{\hatkinfty} \widehat{L_\infty}) \otimes_{\widehat{L_\infty}} \mathcal{O}C_L.$$
Taking $G_{L_\infty}$-invariant on both sides gives 
$$  \calH_L(W\otimes_C C_L) \simeq \calH(W)\otimes_{\hatkinfty} \widehat{L_\infty}.$$
The decompleted  base change isomorphism then follows.
% we can conclude blah:indeed, on the right hand side of above, $(\calH(W)\otimes_{\hatkinfty} \widehat{L_\infty})$ is fixed by $H_L$, and $(\mathcal{O}C_L)^{H_L} \simeq \widehat{L_\infty})$.

%$\calH(W) \otimes_{\hatkinfty} \hatbbkinfty $ is a $\Gamma_\bbk$-rep over $\hatbbkinfty $, and hence by Sen theory, corresp to a unique $\bbc$-rep of $G_\bbk$. It should precisely be $W  \otimes_C \bbc$. This is because $\calH(W) \otimes_{\hatkinfty} \hatbbkinfty \otimes_{\hatbbkinfty} \bbc   = \calH(W) \otimes_{\hatkinfty} C \otimes_C \bbc $ indeed, this is because the composite $\hatkinfty \to  \hatbbkinfty \to \bbc$  is the same as $\hatkinfty \to  C \to \bbc$. Note RHS is $W\otimes_\qp \bbc$.

Item (3): The Higgs field on $\calH(W)= (W\otimes_C \coc)^{G_\kinfty}$ is naturally induced from $\gr^0 \nabla$ on $\coc$ defined in Notation \ref{notagrzero}, since $\gr^0 \nabla$  commutes with $G_K$-actions.

%$\nabla: \obdr \to \obdr \otimes_K \hat{\Omega_K}$, take $\gr^0$. Eqn (2.9) because $\Gamma_\geom$-action is quasi-unipotent, hence can construct a Higgs field Lem. 2.11 

Item (4): it suffices to prove that $\calH$ is compatible with tensor products and duality, the upgraded version and the $H$-version immediately follow.
Let $W_1, W_2 \in \rep_{G_K}(C)$, then it is easy to check that
$$(\ch(W_1)\otimes_\hatkinfty \ch(W_2))\otimes_\hatkinfty \coc \simeq (W_1\otimes_C W_2)\otimes_C \coc.$$
Take $G_{\kinfty}$-invariants on both sides gives
\begin{equation}\label{eqtensorcalh}
\ch(W_1)\otimes_\hatkinfty \ch(W_2) \simeq \ch(W_1\otimes_C W_2).
\end{equation}
We now consider duality.
Apply \eqref{eqtensorcalh} to $W_1=W, W_2=W^\vee$, then we can obtain a homomorphism of finite dimensional $\hatkinfty$-vector spaces
$$\ch(W^\vee) \to \ch(W)^\vee.$$
It is an isomorphism because its base change over (the field) $\widehat{K^\pf_\infty}$ is so, since the base change, via Item (2), recovers duality morphism in Sen theory for the field $\kpf$ (with perfect residue field).
\end{proof}

\subsection{Hodge--Tate rigidity}\label{subsecHT}

\begin{defn}
For $W \in \rep_{G_K}(C)$, define 
$$D_{\HT, K}(W): = (W\otimes_C \obht)^{G_K}.$$ 
\end{defn}

%For $\mathbb W \in \rep_{G_\bbK}(\bbc)$, define $D_{\HT, \bbc}(\mathbb W): = (\mathbb W\otimes_\bbc \bbht)^{G_\bbK}$.

\begin{prop}\label{propDHTBC}
There is a base change isomorphism
$$D_{\HT, K}(W)\otimes_K L \simeq D_{\HT, L}(W\otimes_C C_L).$$
\end{prop}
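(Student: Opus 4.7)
The plan is to apply the grading decomposition $\obht \simeq \bigoplus_{i \in \mathbb{Z}} \coc(i)$ provided by Prop.~\ref{propucoor}(2), and then reduce to the two base-change inputs already in hand: the base change of the Simpson correspondence (Thm.~\ref{thmSimpson}(2)) and the base change of $\gammak$-cohomology (Cor.~\ref{corhiggsbasechange}(2)). Concretely, I would first write
\begin{equation*}
D_{\HT, K}(W) = (W \otimes_C \obht)^{G_K} \simeq \bigoplus_{i \in \mathbb{Z}} \bigl(W \otimes_C \coc(i)\bigr)^{G_K},
\end{equation*}
using that the $G_K$-action preserves the grading. Since $\chi_p$ is trivial on $G_{\kinfty}$, the Tate twist does not alter the $G_{\kinfty}$-action, so Thm.~\ref{thm514} yields $(W \otimes_C \coc(i))^{G_{\kinfty}} \simeq \calH(W)(i)$ as objects of $\rep_\gammak(\hatkinfty)$. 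Taking further $\gammak$-invariants gives
\begin{equation*}
D_{\HT, K}(W) \simeq \bigoplus_{i \in \mathbb{Z}} H^0\bigl(\gammak,\, \calH(W)(i)\bigr).
\end{equation*}

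Next, I would apply Cor.~\ref{corhiggsbasechange}(2) to each $\calH(W)(i) \in \rep_\gammak(\hatkinfty)$ to obtain the isomorphism
\begin{equation*}
H^0\bigl(\gammak, \calH(W)(i)\bigr) \otimes_K L \simeq H^0\bigl(\Gamma_L,\, \calH(W)(i) \otimes_\hatkinfty \hatlinfty\bigr),
\end{equation*}
and combine it with the Tate twist of the Simpson base change from Thm.~\ref{thmSimpson}(2),
\begin{equation*}
\calH(W)(i) \otimes_\hatkinfty \hatlinfty \simeq \calH_L(W \otimes_C C_L)(i).
\end{equation*}
Running the same two-step analysis in reverse for $L$ identifies $\bigoplus_i H^0(\Gamma_L, \calH_L(W \otimes_C C_L)(i))$ with $D_{\HT, L}(W \otimes_C C_L)$. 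Summing over $i$, and using that $- \otimes_K L$ commutes with arbitrary direct sums, then yields the required isomorphism.

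The main obstacle is a purely bookkeeping one: verifying that the Tate twists are handled compatibly across the two invariant-taking steps (first $G_{\kinfty}$, then $\gammak$) and across the base change to $L$. The key observation making this work is that $\chi_p|_{G_{\kinfty}}$ is trivial, so the Tate twist only affects the $\gammak$-action on $\calH(W)$ and leaves the Simpson correspondence itself untouched; the analogous statement holds over $L$, so the twisted versions of Thm.~\ref{thmSimpson}(2) and Cor.~\ref{corhiggsbasechange}(2) apply verbatim and assemble into the global isomorphism by naturality.
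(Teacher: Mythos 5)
Your proof is correct and follows essentially the same route as the paper: decompose $\obht$ into its graded pieces $\coc(i)$, identify the $G_{\kinfty}$-invariants with a Tate twist of the Simpson correspondence (the paper writes this as $\calH(W(j))$ while you write $\calH(W)(i)$, but these agree since $\chi_p$ is trivial on $G_{\kinfty}$), and then invoke Cor.~\ref{corhiggsbasechange} together with Thm.~\ref{thmSimpson}(2) to base change the $\gammak$-invariants. You are somewhat more explicit than the paper about the bookkeeping of the Tate twists and about the use of Thm.~\ref{thmSimpson}(2), but the argument is the same.
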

\begin{proof}
It is clear $D_\HT(W) \simeq \oplus_{j \in \mathbb Z} (\ch(W(j)))^{\gammak}$, where $W(j)$ is the Tate twist. It suffices to note that for each $j$, $(\ch(W(j)))^{\gammak} \otimes_K L  \simeq (\ch_L(W(j) ))^{\Gamma_L}$ by Cor. \ref{corhiggsbasechange}.
\end{proof}

\begin{cor} \label{cor523}
For $W \in \rep_{G_K}(C)$, $D_{\HT, K}(W)$ is a $K$-vector space such that 
$$\dim_K D_{\HT, K}(W) \leq \dim_C W.$$
\end{cor}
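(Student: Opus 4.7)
The plan is to combine the decomposition of $D_{\HT,K}(W)$ coming from the Simpson correspondence with the base change property (Prop.~\ref{propDHTBC}) in order to reduce the assertion to the classical Sen theory for a CDVF with perfect residue field.

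First, I would observe that the proof of Prop.~\ref{propDHTBC} already supplies the canonical decomposition
\[
D_{\HT,K}(W) \;\simeq\; \bigoplus_{j\in\mathbb Z} \bigl(\calH(W(j))\bigr)^{\gammak},
\]
so $D_{\HT,K}(W)$ is automatically a $K$-vector space. Moreover, by Cor.~\ref{corhiggsbasechange}(1), each summand $(\calH(W(j)))^{\gammak}$ is finite dimensional over $K$. Thus the only non-trivial point is to bound the total dimension by $\dim_C W$.

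For this, I would take $L=\kpf$ as in Example~\ref{notaspecialbbk}, so that $L$ is a CDVF whose residue field is perfect. Applying Prop.~\ref{propDHTBC} summand by summand (and using that $-\otimes_K L$ preserves dimensions of $K$-vector spaces, even of infinite-dimensional ones), we obtain
\[
\dim_K D_{\HT,K}(W) \;=\; \dim_L \bigl(D_{\HT,K}(W)\otimes_K L\bigr) \;=\; \dim_L D_{\HT,L}(W\otimes_C C_L).
\]
Now $L$ has perfect residue field, so $D_{\HT,L}$ is the classical Hodge--Tate functor of Fontaine--Sen; in particular it is finite-dimensional with
\[
\dim_L D_{\HT,L}(W\otimes_C C_L) \;\leq\; \dim_{C_L}(W\otimes_C C_L) \;=\; \dim_C W,
\]
which gives the claimed bound and simultaneously proves finite-dimensionality.

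I do not anticipate a real obstacle here: once Prop.~\ref{propDHTBC} and the classical Sen theory bound (recalled in \S\ref{subsec41}) are both available, the assertion is essentially a bookkeeping exercise. The only point requiring a small sanity check is that $\dim_K V = \dim_L(V\otimes_K L)$ for an \emph{a priori} possibly infinite-dimensional $K$-vector space $V$, which is a standard property of extension of scalars by a field extension; this justifies passing from the summand-wise base change to the total dimension bound.
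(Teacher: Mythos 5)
Your proof is correct and follows essentially the same route as the paper: apply the base change isomorphism of Prop.~\ref{propDHTBC} with $L=\kpf$ to reduce to the classical perfect residue field case. The extra care you take about possibly infinite-dimensional vector spaces is a reasonable sanity check, though it can be sidestepped by working summand-by-summand (each $(\calH(W(j)))^{\gammak}$ is finite-dimensional by Cor.~\ref{corhiggsbasechange}(1), and the classical bound on the $\kpf$ side shows only finitely many summands are nonzero).
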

\begin{proof}
Apply above proposition to the embedding $K \into \kpf$, then it suffices to treat the case $K=\kpf$, which is well-known, cf. \cite[\S 2.7]{Fon04}.
\end{proof}
 
 \begin{defn}
Say $W \in \rep_{G_K}(C)$ a Hodge--Tate $C$-representation (or simply, $W$ is Hodge--Tate) if 
$$\dim_K D_{\HT, K}(W) = \dim_C W.$$
 \end{defn}

\begin{theorem}
\begin{enumerate}
\item   Let $W \in \rep_{G_K}(C)$, then $W$ is Hodge--Tate if and only the $G_L$-representation $C_L\otimes_C (W|_{G_L})$ is so.
  
\item   Let $V \in \rep_{G_K}(\qp)$, then $V$ is Hodge--Tate if and only $V|_{G_L}$ is Hodge--Tate.
\end{enumerate}
\end{theorem}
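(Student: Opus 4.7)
My plan is to prove Item (1) first by exploiting the base change isomorphism already established in Prop. \ref{propDHTBC}, and then derive Item (2) as a formal consequence.

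For Item (1), the key observation is that being Hodge-Tate is a statement about the $K$-dimension of $D_{\HT,K}(W)$ matching $\dim_C W$. Write $W_L := C_L \otimes_C (W|_{G_L}) \in \rep_{G_L}(C_L)$, so that $\dim_{C_L} W_L = \dim_C W$. By Prop. \ref{propDHTBC}, we have $D_{\HT,K}(W) \otimes_K L \simeq D_{\HT,L}(W_L)$, hence
\begin{equation*}
\dim_K D_{\HT,K}(W) = \dim_L D_{\HT,L}(W_L).
\end{equation*}
Combining this equality with the general bounds $\dim_K D_{\HT,K}(W) \leq \dim_C W$ and $\dim_L D_{\HT,L}(W_L) \leq \dim_{C_L} W_L$ from Cor. \ref{cor523}, the statement $\dim_K D_{\HT,K}(W) = \dim_C W$ is equivalent to $\dim_L D_{\HT,L}(W_L) = \dim_{C_L} W_L$, which is exactly the desired equivalence.

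For Item (2), the plan is to reduce to Item (1) via the $C$-representation $W_V := V \otimes_{\Qp} C$. Since $V$ is a $\Qp$-vector space, the scalar extension is associative and gives
\begin{equation*}
D_{\HT,K}(V) = (V \otimes_{\Qp} \obht)^{G_K} = (W_V \otimes_C \obht)^{G_K} = D_{\HT,K}(W_V),
\end{equation*}
so $V$ is Hodge-Tate as a $\Qp$-representation precisely when $W_V$ is Hodge-Tate as a $C$-representation (noting $\dim_C W_V = \dim_{\Qp} V$). Similarly, restricting along $i: G_L \to G_K$ and extending scalars, we have $V|_{G_L} \otimes_{\Qp} C_L = W_V \otimes_C C_L$, so $V|_{G_L}$ is Hodge-Tate iff $W_V \otimes_C C_L$ is Hodge-Tate. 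Applying Item (1) to $W_V$ then concludes.

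There is no real obstacle here — the heavy lifting has already been done: the construction of the Simpson correspondence $\calH$, its base change property (Thm. \ref{thmSimpson}(2)), the cohomological base change (Cor. \ref{corhiggsbasechange}) underlying Prop. \ref{propDHTBC}, and the dimension bound of Cor. \ref{cor523}. The only minor point to check carefully is the associativity of the scalar extensions in reducing Item (2) to Item (1), which is straightforward since $V$ is a $\Qp$-vector space and the $G_L$-action on $W_V \otimes_C C_L$ is the natural one.
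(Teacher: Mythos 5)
Your proposal is correct and follows essentially the same route as the paper, whose proof is simply the instruction to apply the base change isomorphism $D_{\HT,K}(W)\otimes_K L \simeq D_{\HT,L}(W\otimes_C C_L)$ of Prop.~\ref{propDHTBC} together with the dimension bound of Cor.~\ref{cor523}. You have merely spelled out the dimension-counting in Item (1) and the reduction of Item (2) to Item (1) via $W_V = V\otimes_{\Qp} C$, both of which are implicit in the paper's one-line argument.
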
 
\begin{proof}
Apply Prop. \ref{propDHTBC}.
\end{proof} 
 
We also discuss some results related with ``Sen operators" (which  is not covered in Brinon's Sen theory \cite{Bri03}).

\begin{theorem} \label{thm526semisimple}

\begin{enumerate}
\item
Let $H \in \rep_{\Gamma_K}(\kinfty)$,                                                          
 then the $\Gamma_K$-action on $H$ is locally analytic. As a consequence, one can define a $\kinfty$-linear endomorphism $\varphi_H: H \to H$, called the Sen operator.

\item Let $W \in \rep_{G_K}(C)$, then $W$ is Hodge--Tate if and only if the Sen operator on $\mathrm{H}(W)$ (defined via Thm. \ref{thmSimpson}) is semi-simple with integer eigenvalues.
\end{enumerate}
\end{theorem}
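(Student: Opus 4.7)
For part (1), I plan to adapt the classical construction of the Sen operator. Given $H \in \rep_{\gammak}(\kinfty)$ of $\kinfty$-rank $n$, since $\gammak$ is topologically finitely generated and $\kinfty = \bigcup_m K_m$, a standard argument (as in the proof of Cor.~\ref{corequivkinfty}) produces some $m_0 \gg 0$ and a $\gammak$-stable $K_{m_0}$-model $H_{m_0}$ with $H \simeq H_{m_0} \otimes_{K_{m_0}} \kinfty$. After enlarging $K$ by a finite extension if necessary so that $\gammak \simeq \zp$, fix a topological generator $\gamma_0$ and represent its action on $H_{m_0}$, in a chosen $K_{m_0}$-basis, by a matrix $A \in \GL_n(K_{m_0})$. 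On a small enough open pro-$p$ subgroup $\Gamma' \subset \gammak$, the matrices representing $\gamma \in \Gamma'$ are $p$-adically close to the identity, so $\log(\gamma)$ converges, and I will set
\[
\varphi_H := \frac{\log \gamma}{\log \chi_p(\gamma)} \in \End_{K_{m_0}}(H_{m_0}),
\]
which is independent of $\gamma \in \Gamma'$ (the $\Gamma'$-action being of the form $\gamma \mapsto \exp(\log \chi_p(\gamma) \cdot \varphi_H)$) and extends $\kinfty$-linearly to $H$.

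For part (2), my plan is to reduce to the classical perfect-residue-field case via the base change $K \hookrightarrow \kpf$ of Example~\ref{notaspecialbbk}, combining Thm~\ref{thmSimpson}(2) with the Hodge-Tate rigidity theorem already established in \S\ref{subsecHT}. First, using $\obht \simeq \coc[t^{\pm 1}]$ with $g(t) = \chi_p(g) t$, together with Thm~\ref{thmSimpson}, Cor.~\ref{corequivkinfty}, and the fact that the latter equivalence preserves $\gammak$-invariants, I identify
\[
D_{\HT, K}(W) \;=\; \bigoplus_{j\in\mathbb{Z}} \bigl(W \otimes_C \coc \cdot t^j\bigr)^{G_K} \;=\; \bigoplus_{j\in\mathbb{Z}} \bigl(\mathrm{H}(W)(j)\bigr)^{\gammak}.
\]
Since the Sen operator is the infinitesimal action of $\Lie \gammak$, a direct computation yields $\varphi_{M(j)} = \varphi_M + j$ for any $M \in \rep_{\gammak}(\kinfty)$.

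Next, applying Thm~\ref{thmSimpson}(2) to $i: K \hookrightarrow \kpf$ produces a canonical isomorphism $\mathrm{H}(W) \otimes_{\kinfty} K^\pf_\infty \simeq \mathrm{H}_{\kpf}(W \otimes_C C_{\kpf})$, and the Sen operators are identified under this extension of scalars. Since semi-simplicity and integrality of eigenvalues of a $\kinfty$-linear endomorphism are preserved under the faithfully flat field extension $\kinfty \hookrightarrow K^\pf_\infty$, the condition on $\varphi_{\mathrm{H}(W)}$ is equivalent to the corresponding condition on $\varphi_{\mathrm{H}_{\kpf}(W\otimes C_{\kpf})}$. By classical Sen theory (\S\ref{subsec41}) for the CDVF $\kpf$ with perfect residue field, this is in turn equivalent to $W \otimes_C C_{\kpf}$ being Hodge-Tate; and by the Hodge-Tate rigidity theorem proved earlier in \S\ref{subsecHT} (applied to $K \hookrightarrow \kpf$), this is equivalent to $W$ being Hodge-Tate.

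The main obstacle will be verifying the compatibility of the Sen operator with the base change $K \hookrightarrow \kpf$, namely that the locally analytic $\gammak$-structure on $\mathrm{H}(W)$ matches the locally analytic $\Gamma_\kpf$-structure on $\mathrm{H}_\kpf(W\otimes C_\kpf)$ under the isomorphism of Thm~\ref{thmSimpson}(2). Since $K \hookrightarrow \kpf$ is unramified (so that $\gammak \simeq \Gamma_\kpf$) and both sides ultimately descend from a common Brinon $\Gamma$-representation (Prop~\ref{thmBrinonSen}), this compatibility should be essentially formal, but requires careful bookkeeping of the chosen $K_{m_0}$-models and topological generators on both sides.
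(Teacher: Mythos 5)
Your proposal is correct. For part (2) you take essentially the same route as the paper (though the paper's proof is only one line: "Item (2) then follows immediately (from the case for $\kpf$)"); you have simply spelled out the chain of identifications
\[
D_{\HT,K}(W) \;\simeq\; \bigoplus_{j\in\mathbb Z}\bigl(\mathrm H(W)(j)\bigr)^{\gammak},
\]
the shift $\varphi_{M(j)}=\varphi_M+j$, the base change along $K\hookrightarrow\kpf$ via Thm.~\ref{thmSimpson}(2), the stability of semi-simplicity and integrality of eigenvalues under field extension, and the earlier Hodge--Tate rigidity of \S\ref{subsecHT}. All of these are correct and implicit in the paper's terse proof. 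Your part (1), however, takes a genuinely different route: the paper reduces to the perfect-residue-field case by base changing $H$ to $H\otimes_\kinfty K^\pf_\infty\in\rep_{\Gamma_\kpf}(K^\pf_\infty)$, invokes classical Sen theory there to conclude local analyticity, and descends using $\Gamma_\kpf\simeq\gammak$; you instead construct $\varphi_H$ directly, by descending to a finite level $K_{m_0}$, restricting to a small open subgroup $\Gamma'$ (which must be taken inside $\Gamma_{m_0}$ so that the action is $K_{m_0}$-\emph{linear} rather than semi-linear — your passage mentioning a matrix $A$ for the topological generator $\gamma_0$ blurs this, since $\gamma_0$ acts only semi-linearly, but the eventual restriction to $\Gamma'$ repairs it), and taking $\varphi_H=\log\gamma/\log\chi_p(\gamma)$. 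Your construction is more self-contained (it effectively reproves local analyticity rather than quoting it), at the cost of needing the bookkeeping around the finite-level model and the pro-$p$ subgroup; the paper's reduction is shorter but depends on the classical theorem as a black box. Both are valid, and they produce the same operator since each is the normalized action of $\Lie\gammak$. Your flagged "main obstacle" (compatibility of Sen operators under the base change $K\hookrightarrow\kpf$) is indeed formal, exactly for the reason you give: $\gammak\simeq\Gamma_\kpf$ (unramifiedness, cf. Notation~\ref{notaGammagp}), and the operator on each side is characterized as the derivative of the same group action, so extension of scalars along $\kinfty\hookrightarrow K^\pf_\infty$ identifies them.
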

 \begin{proof}
 For Item (1), the base change $H \otimes_\kinfty K^\pf_\infty$ is an object in $\rep_{\Gamma_\kpf}(K^\pf_\infty)                                                                 $, with the $\Gamma_\kpf$-action locally analytic by Sen theory (in the perfect residue field case). Note that $\Gamma_\kpf < \Gamma_K$ is an open embedding, hence the $\Gamma_K$ action on $H$ is also locally analytic. Item (2) then follows immediately (from the case for $\kpf$).
 \end{proof}

% Relative case, Shimizu Thm. 5.5.    $ W\otimes_C \obht =  \oplus_{i \in \mathbb Z} (\calH \otimes_{\hatkinfty})^{\phi=i}$....etc..

\section{Sen--Fontaine theory}\label{secSenFontaine}
In this section, we first review the Sen--Fontaine theory in the classical perfect residue field case.
We then carry out a Tate--Sen--Fontaine decompletion in the imperfect residue field case. Finally, we also summarize Andreatta--Brinon's ``Sen--Fontaine theory"  in the imperfect residue field case; it serves for comparison purposes (with our Riemann--Hilbert correspondence) only, and will not be  used in our paper.
\subsection{Sen--Fontaine theory in the perfect residue field case} \label{subsecSenFon}
In continuation of \S \ref{subsec41}, we recall Sen--Fontaine theory for the    CDVF  $\bbk$ with perfect residue field.

\begin{notation}
 Let 
 $$\bbldr=(\bbdr)^{G_{\bbkinfty}},\quad \bbldrplus =(\bbdrplus)^{G_{\bbkinfty}}.$$
 The filtration on $\bbdr$ induces a filtration on $\bbldr$. Note $\fil^0 \bbldr =\bbldrplus$.
 \end{notation}
 
 \begin{prop}\cite[Thm. 3.6]{Fon04}
 The embeddings $\bbkinfty[[t]] \into  \bbldrplus \into \bbdrplus$ induce tensor equivalences of categories $$ \rep_{\gammabbk}(\bbkinfty[[t]]) \to \rep_{\gammabbk}(\bbldrplus) \to \rep_{G_\bbk}(\bbdrplus), $$
  as well as 
$$ \rep_{\gammabbk}(\bbkinfty((t)) \to \rep_{\gammabbk}(\bbldr) \to \rep_{G_\bbk}(\bbdr). $$
 \end{prop}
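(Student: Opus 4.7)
The plan is to establish the two chains of equivalences in three steps: first the Galois descent from $\rep_{G_\bbk}(\bbdrplus)$ to $\rep_{\gammabbk}(\bbldrplus)$, then the $t$-adic decompletion from $\rep_{\gammabbk}(\bbldrplus)$ to $\rep_{\gammabbk}(\bbkinfty[[t]])$, and finally invert $t$ to pass to the $\bbdr$-version. All functors are defined by obvious base change or $G_{\bbkinfty}$-invariants, and being tensor functors (with compatible duality) is formal once the equivalences are established.

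For the Galois descent $\rep_{\gammabbk}(\bbldrplus) \simeq \rep_{G_\bbk}(\bbdrplus)$, the quasi-inverse sends $W$ to $W^{G_{\bbkinfty}}$. The heart is a $t$-adic d\'evissage: filter $\bbdrplus$ by $\fil^i \bbdrplus = t^i \bbdrplus$ with graded pieces $\bbc(i)$. Since $\bbc(i) \simeq \bbc$ as $G_{\bbkinfty}$-modules (the cyclotomic character is trivial on $G_{\bbkinfty}$), Cor.~\ref{thmalmostvanish} gives $H^j(G_{\bbkinfty}, \bbc(i)) = 0$ for $j \geq 1$ and $H^0(G_{\bbkinfty}, \bbc(i)) = \hatbbkinfty$; passing to the limit and using $t$-adic completeness yields $H^0(G_{\bbkinfty}, \bbdrplus) = \bbldrplus$ and $H^j(G_{\bbkinfty}, \bbdrplus) = 0$ for $j \geq 1$. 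The same d\'evissage applied to $\GL_n$ (reducing from $\GL_n(\bbdrplus)$ to $\GL_n(\bbc)$ on the quotient and to the additive $M_n(\bbc)$ on successive kernels, using Prop.~\ref{thmFon0411}) gives $H^1(G_{\bbkinfty}, \GL_n(\bbdrplus)) = 0$. Hence every $W \in \rep_{G_\bbk}(\bbdrplus)$ admits a $G_{\bbkinfty}$-fixed $\bbdrplus$-basis, so $W^{G_{\bbkinfty}}$ is free over $\bbldrplus$ of the right rank and $W^{G_{\bbkinfty}} \otimes_{\bbldrplus} \bbdrplus \isoto W$.

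For the decompletion $\rep_{\gammabbk}(\bbkinfty[[t]]) \simeq \rep_{\gammabbk}(\bbldrplus)$, I would again d\'evisser along the $t$-adic filtration, whose graded pieces are $\hatbbkinfty(i) \simeq \hatbbkinfty$ on the $\bbldrplus$ side and $\bbkinfty(i) \simeq \bbkinfty$ on the $\bbkinfty[[t]]$ side. The classical Sen decompletion (the perfect-residue-field analog of Cor.~\ref{corequivkinfty}) gives the statement modulo $t$, and successive lifting from $\bmod t^n$ to $\bmod t^{n+1}$ is obstructed by an $H^1$ that vanishes after extending scalars from $\bbkinfty$ to $\hatbbkinfty$ (again by Tate's normalized trace, exactly as in Prop.~\ref{thm432imperf} and Lem.~\ref{lemliuzhu310}). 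Concretely, the quasi-inverse sends $M$ to $\bigcup_{m \geq 0} M_f^{(m)}$, where $M_f^{(m)} \subset M/t^m M$ is the union of $\bbkinfty$-finite-dimensional $\gammabbk$-stable subspaces, reassembled via the inverse system.

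Finally, the $t$-inverted version follows by noting any $W \in \rep_{G_\bbk}(\bbdr)$ contains a $G_\bbk$-stable $\bbdrplus$-lattice (e.g.\ take $\bbdrplus \cdot e_i$ for a basis and replace by $t^{-N}$ of it once $N$ bounds the pole orders), and similarly on the $\bbldr$ and $\bbkinfty((t))$ sides; applying the $+$-case and inverting $t$ gives the desired equivalences. The main obstacle is the second step, the $t$-adic decompletion: the cleanest route is to verify the Berger--Colmez Tate--Sen axioms \cite[Def.~3.1.3]{BC08} for $(\bbkinfty[[t]], \bbldrplus)$, for which Tate's normalized traces on $\bbkinfty$ extend $t$-linearly and $t$-adically continuously to $\bbldrplus$; the required constants $c_1, c_2, c_3$ are inherited from those in Rem.~\ref{remts123}, applied to $\bbk$ instead of $K$.
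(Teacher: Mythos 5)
Your three-step strategy — dévissage along the $t$-adic filtration with the almost-vanishing of $G_{\bbkinfty}$-cohomology for Galois descent, Tate normalized traces via the Berger--Colmez axioms for decompletion, then $G_\bbk$-stable lattices to invert $t$ — is exactly Fontaine's original argument, and it is the same blueprint the paper reuses for the imperfect-residue-field generalization (Lem.~\ref{lem621}, Prop.~\ref{thmTSFontaine}, Thm.~\ref{thmRH}). The only minor slip is the lattice construction: $t^{-N}\bbdrplus^n$ need not be $G_\bbk$-stable even when $N$ bounds all pole orders; the correct move is to take the $\bbdrplus$-module generated by the $G_\bbk$-orbit of the standard lattice, which compactness (together with your bound) keeps finitely generated and hence a lattice.
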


\begin{notation}
Define the following    functors:
\begin{eqnarray*}
\crh_{\bbk }^+: &   \rep_{G_\bbk}(\bbdrplus) \simeq \rep_{\gammabbk}(\bbldrplus) \\
\crh_{\bbk }: &    \rep_{G_\bbk}(\bbdr) \simeq \rep_{\gammabbk}(\bbldr) \\
\rrh_{\bbk }^+ : &    \rep_{G_\bbk}(\bbdrplus)  \simeq \rep_{\gammabbk}(\bbkinfty[[t]])\\
\rrh_{\bbk } : &    \rep_{G_\bbk}(\bbdr)  \simeq \rep_{\gammabbk}(\bbkinfty((t)))
\end{eqnarray*}
\end{notation}

\begin{cor}
Let $W  \in \rep_{G_\bbk}(\bbdrplus)$, then we have canonical isomorphisms
\begin{eqnarray*}
\crh_{\bbk}^+(W)/t &\simeq \calH_{\bbk}(W/tW)\\
\rrh_{\bbk}^+(W)/t &\simeq  \mathrm{H}_{\bbk}(W/tW)
\end{eqnarray*}
\end{cor}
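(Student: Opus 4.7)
The plan is to reduce both isomorphisms to the single cohomological vanishing
\[
H^1(G_{\bbkinfty}, W) = 0 \quad \text{for every } W \in \rep_{G_\bbk}(\bbdrplus),
\]
and then to propagate the first isomorphism through the decompletion equivalence $\rep_{\gammabbk}(\bbldrplus) \simeq \rep_{\gammabbk}(\bbkinfty[[t]])$ in order to obtain the second.

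To establish the vanishing, I would exploit the $t$-adic filtration on $W$. Since $W$ is finite free over $\bbdrplus$, it is $t$-adically complete; and since $t$ is fixed by $G_{\bbkinfty}$, multiplication by $t^n$ identifies each graded piece $t^n W / t^{n+1} W$ with $W/tW$ as a $G_{\bbkinfty}$-module. Now $W/tW$ is an object of $\rep_{G_{\bbkinfty}}(\bbC)$, so by Cor.~\ref{thmalmostvanish}(2) it has vanishing continuous $G_{\bbkinfty}$-cohomology in all positive degrees. A straightforward induction on $n$ then shows $H^1(G_{\bbkinfty}, W/t^n W) = 0$ and yields surjectivity of the transition maps on $H^0$, so the Mittag--Leffler condition holds and the continuous Milnor short exact sequence forces $H^1(G_{\bbkinfty}, W) = 0$.

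Once the vanishing is in hand, the first isomorphism follows by applying the long exact sequence of cohomology to $0 \to tW \to W \to W/tW \to 0$ and using that $W \xrightarrow{\cdot t} tW$ is an isomorphism of $G_{\bbkinfty}$-modules; this produces $W^{G_{\bbkinfty}}/t \cdot W^{G_{\bbkinfty}} \isoto (W/tW)^{G_{\bbkinfty}}$, which is exactly $\crh_{\bbk}^+(W)/t \simeq \calH_{\bbk}(W/tW)$. For the second isomorphism, applying the vanishing to $W = \bbdrplus$ first yields $\bbldrplus/t \simeq \hatbbkinfty$. Then writing $\rrh_{\bbk}^+(W) = N \in \rep_{\gammabbk}(\bbkinfty[[t]])$ with $N \otimes_{\bbkinfty[[t]]} \bbldrplus \simeq \crh_{\bbk}^+(W)$, reducing modulo $t$ gives $N/tN \otimes_{\bbkinfty} \hatbbkinfty \simeq \crh_{\bbk}^+(W)/t \simeq \calH_{\bbk}(W/tW)$, and the decompletion equivalence $\rep_{\gammabbk}(\bbkinfty) \simeq \rep_{\gammabbk}(\hatbbkinfty)$ then identifies $N/tN$ with $\mathrm{H}_{\bbk}(W/tW)$, as desired.

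The main technical obstacle is the passage in the first step from vanishing modulo every power of $t$ to the vanishing for $W$ itself: one must verify that the continuous $G_{\bbkinfty}$-cohomology of $W$ (equipped with its $t$-adic topology inherited from $\bbdrplus$) is correctly computed as the inverse limit of the continuous cohomologies of the finite-length quotients $W/t^n W$. This requires some care with the topology and the Mittag--Leffler argument feeding into the Milnor sequence, but once settled the remainder of the proof is essentially formal bookkeeping with tensor equivalences.
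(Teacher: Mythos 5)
Your proposal is correct. The paper gives no separate proof for this corollary, treating it as an immediate consequence of the preceding proposition (Fontaine's \cite[Thm.~3.6]{Fon04}), so your write-up in effect unwinds what is inside Fontaine's theorem rather than taking a genuinely different route. The key vanishing $H^1(G_{\bbkinfty}, W)=0$ for $W$ finite free over $\bbdrplus$ --- obtained by $t$-adic d\'evissage from Cor.~\ref{thmalmostvanish}(2) together with a Mittag--Leffler / Milnor-sequence passage to the $t$-adic limit --- is exactly the engine of Fontaine's result, and it is also the template the paper uses explicitly for the corresponding statements in the imperfect residue field case (Lem.~\ref{lem621} and the d\'evissage in Thm.~\ref{thmRH}); there the passage to the $t$-adic completion is handled by invoking \cite[Lem.~3.18]{Sch13} instead of a hand-written Milnor sequence, but the content is the same. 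Your flagged technical point, that the quotients $W/t^nW$ are Banach $G_{\bbkinfty}$-modules rather than discrete ones so the Milnor sequence must be invoked in the correct topological setting, is precisely what Scholze's lemma (and the completeness discussion in Rem.~\ref{remcompleteobdr}) addresses in the imperfect case.
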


\begin{rem}
\begin{enumerate}
\item
Let $V \in \rep_{G_\bbk}(\qp)$ and let $W=V\otimes_\qp \bbdrplus \in \rep_{G_\bbk}(\bbdrplus)$.
 Then   $\rrh_{\bbk}^+(W)/t  \simeq  \mathrm{H}_{\bbk}(W/tW)$ is precisely the usually written  isomorphism ``$D_\dif^+(V)/t \simeq  D_\Sen(V)$" in the literature, cf. e.g., \cite[\S 5.3]{Ber02}.

\item  
We can summarize Sen theory and Sen--Fontaine theory in the perfect residue field case as follows.
\begin{equation*}
\begin{tikzcd}
                                      & \rep_{G_\bbk}(\bbC) \arrow[rr, "\simeq"]                     &  & \rep_{\Gamma_\bbk}(\hatkinfty) \arrow[rr, "\simeq"]               &  & \rep_{\Gamma_\bbk}( {\kinfty})                               \\
\rep_{G_\bbk}(\qp) \arrow[ru] \arrow[rd] &                                                        &  &                                                                    &  &                                                            \\
                                      & \rep_{G_\bbk}(\bbdrplus) \arrow[rr, "\simeq"] \arrow[uu, "\bmod t"'] &  & {\rep_{\Gamma_\bbk}(\bbldrplus)} \arrow[rr, "\simeq"] \arrow[uu, "\bmod t"'] &  & {\rep_{\Gamma_\bbk}( {\kinfty}[[t]])} \arrow[uu, "\bmod t"']
\end{tikzcd}
\end{equation*}
\end{enumerate}
\end{rem}

\subsection{Tate--Sen--Fontaine decompletion in the imperfect residue field case}
In continuation of \S \ref{subsec43}, we develop some Tate--Sen--Fontaine decompletion results in the imperfect residue field case.

 \begin{lemma} \label{lem621}
 Let $U \in \rep_{G_K}({\bdrplus})$.
 Suppose $-\infty \leq a \leq b \leq +\infty$.
 \begin{enumerate}
 \item $H^i(\Ht, U\otimes_{\bdrplus} \obdr^{[a, b]})=0$ for $i \geq 1$.
 \item $H^i(G_{\kinfty}, U\otimes_{\bdrplus}\obdr^{[a, b]})=0$ for $i \geq 1$.
 \end{enumerate}
 \end{lemma}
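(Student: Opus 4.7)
The plan is to reduce both statements to the Hodge-Tate analogues already established, namely Lemma \ref{lemSimpcoho}(1) for part (1) and Theorem \ref{thm514} for part (2), by devissage along the $t$-adic filtration on $\obdr$. The key observation is that for each $j \in \mathbb{Z}$, multiplication by $t^j$ induces a $G_K$-equivariant isomorphism $\gr^j \obdr \simeq \coc(j)$, and since $U$ is finite free (hence flat) over the discrete valuation ring $\bdrplus$, one obtains a $G_K$-equivariant isomorphism
\[
U \otimes_{\bdrplus} \gr^j \obdr \;\simeq\; (U/tU)(j) \otimes_C \coc.
\]
Setting $W := (U/tU)(j)$, which lies in $\rep_{G_K}(C)$, Lemma \ref{lemSimpcoho}(1) yields $H^i(\Ht, W \otimes_C \coc) = 0$ for $i \geq 1$, and Theorem \ref{thm514} yields $H^i(G_{\kinfty}, W \otimes_C \coc) = 0$ for $i \geq 1$. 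This settles both parts in the case $a = b$.

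For finite $a \leq b$, I would induct on $b - a$ using the short exact sequence
\[
0 \to \gr^b \obdr \to \obdr^{[a,b]} \to \obdr^{[a, b-1]} \to 0,
\]
which remains exact after tensoring with $U$ over $\bdrplus$ by flatness; the long exact sequences in $\Ht$- and $G_{\kinfty}$-cohomology reduce the claim to the graded case. For $a$ finite and $b = +\infty$, I would invoke the identification $\fil^a \obdr = \projlim_j \obdr^{[a, a+j]}$, which makes essential use of the $t$-adic completion built into Definition \ref{notaobdr} (cf.\ Remark \ref{remcompleteobdr}), and then apply Milnor's $\projlim{}^1$ exact sequence for continuous cohomology. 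The terms $\projlim_j H^i$ vanish for $i \geq 1$ by the finite case, and the Mittag-Leffler property needed to kill $\projlim{}^1$ of the degree-zero cohomologies follows from the already-established vanishing of $H^1$ on the graded pieces, which forces the transition maps on $H^0$ to be surjective. The remaining cases with $a = -\infty$ are treated by writing the module as the filtered colimit $\obdr^{[-\infty, b]} = \varinjlim_j \obdr^{[-j, b]}$ and verifying that continuous cohomology of $\Ht$ and $G_{\kinfty}$ commutes with this particular colimit, in view of the uniform vanishing obtained from the finite case.

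The main obstacle I anticipate is the $a = -\infty$ case, where interchanging continuous cohomology with a filtered colimit is not a purely formal matter. The cleanest route is to argue that, since each $U \otimes_{\bdrplus} \obdr^{[-j, b]}$ is cohomologically trivial in positive degrees and the transition maps on $H^0$ are surjective (by the same Mittag-Leffler argument as above, applied on the left rather than the right of the filtration), any continuous cocycle on the colimit can be trivialized at a finite stage; alternatively, one may use multiplication by a suitable power of $t$ to shift $\fil^{-j}\obdr$ into $\fil^0 \obdr$ and thereby transfer the $a = -\infty$ situation into a $b = +\infty$ one already handled, noting that such a shift is $G_K$-equivariant up to a Tate twist that does not affect the vanishing statement.
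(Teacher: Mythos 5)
Your proposal follows essentially the same dévissage strategy as the paper's proof: the base case $a=b$ reduces via the twist isomorphism $U\otimes_{\bdrplus}\gr^j\obdr \simeq (U/tU)(j)\otimes_C\coc$ to Lemma \ref{lemSimpcoho} (for $\Ht$) and Theorem \ref{thm514} (for $G_\kinfty$); finite intervals are handled by induction along the graded short exact sequences; the case $b=+\infty$ invokes the completeness of the $t$-adic filtration on $\obdr$ (Remark \ref{remcompleteobdr}) to pass to the inverse limit, which is precisely what the paper achieves by citing \cite[Lem.~3.18]{Sch13}; and the case $a=-\infty$ is handled by a colimit argument exploiting that $\Ht$ and $G_\kinfty$ act trivially on $t$, which is what the paper's terse remark amounts to. The one place your write-up is slightly loose is the second alternative for $a=-\infty$: shifting $\fil^{-j}\obdr$ into $\fil^0\obdr$ by $t^j$ handles each finite $j$ individually (and since $\Ht$ and $G_\kinfty$ act through the trivial cyclotomic character, the twist is genuinely trivial, not just ``harmless''), but the $a=-\infty$ module itself is the \emph{colimit} of these shifts, so the colimit interchange you already flagged is still needed; the shift alone does not ``transfer the $a=-\infty$ situation into a $b=+\infty$ one.'' This is a minor phrasing issue rather than a gap, as you do state the colimit argument separately, and the justification via compact image landing in a finite stage of the inductive limit is the standard way to make it rigorous.
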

\begin{proof}
Consider Item (1), the case $a=b$ follows from Lem. \ref{lemSimpcoho}. The case where $b, a \neq \pm \infty$ then follows from induction on the length $b-a$. The case $a=-\infty$ follows since $\Ht$ acts on $t$ trivially.
The case   $  b=+\infty$  follows from \cite[Lem. 3.18]{Sch13} (note: it is important that the filtration on $\obdr$ is \emph{complete} in order to apply \emph{loc. cit.}, cf. Rem. \ref{remcompleteobdr}).

The argument for Item (2) is similar, where the  $a=b$  case follows from Thm. \ref{thm514}.
\end{proof}
%These are analogues of \cite[Cor. 2.4, Lem. 2.9]{LZ17}.

\begin{defn}
Let $\oldr: =(\obdr)^{G_{\kinfty}}$ and induce a filtration from that on $\obdr$.
\end{defn}

\begin{rem}
\begin{enumerate}
\item There is no need to define $\oldr^+$, cf. Rem. \ref{remobdrplus}.
\item By \cite[Lem. 2.31]{Bri06}, there is an inclusion
$$ K\otimes_{K^{\nabla=0}} \bdr^{ G_{\kinfty}=1} \subset \oldr. $$
We do not know if this is an equality.
\end{enumerate}
\end{rem}

%Note we don't need \cite[Lem. 3.1]{LZ17} %\cite[Lem. 3.2]{LZ17}

\begin{lemma} \label{lemgroldr}
For $i \in \mathbb Z$, we have
$\gr^i  (\oldr) \simeq \hatkinfty(i)$ as $\Gamma_K$-representations.
\end{lemma}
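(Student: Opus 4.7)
The plan is to reduce the computation of $\gr^i(\oldr)$ to $G_\kinfty$-invariants of $\gr^i(\obdr)$, and then to apply the Simpson correspondence (specifically Thm.~\ref{thm514}) to the trivial $C$-representation $C$ (Tate-twisted by $i$).

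First, I would consider the short exact sequence of $G_K$-modules
\begin{equation*}
0 \to \fil^{i+1} \obdr \to \fil^i \obdr \to \gr^i \obdr \to 0,
\end{equation*}
and take $G_\kinfty$-invariants. The induced filtration on $\oldr$ gives $\fil^i \oldr = (\fil^i \obdr)^{G_\kinfty}$, so the long exact sequence yields an injection $\gr^i \oldr \hookrightarrow (\gr^i \obdr)^{G_\kinfty}$ whose cokernel embeds in $H^1(G_\kinfty, \fil^{i+1} \obdr)$. By Lem.~\ref{lem621} (applied with $U = \bdrplus$ and $[a,b] = [i+1, +\infty]$, noting $\fil^{i+1}\obdr = \obdr^{[i+1, +\infty]}$), this $H^1$ vanishes. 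Hence
\begin{equation*}
\gr^i \oldr \xrightarrow{\sim} (\gr^i \obdr)^{G_\kinfty}
\end{equation*}
as $\gammak$-representations.

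Next, I would identify $\gr^i \obdr$ with $\coc(i)$ as $G_K$-modules: multiplication by $t^i$ gives an isomorphism $\gr^0 \obdr \isoto \gr^i \obdr$, and since $G_K$ acts on $t$ via $\chi_p$, this multiplication picks up exactly the $i$-th Tate twist; thus $\gr^i \obdr \simeq \coc(i)$ and
\begin{equation*}
(\gr^i \obdr)^{G_\kinfty} \simeq (\coc)^{G_\kinfty}(i).
\end{equation*}

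Finally, I would apply Thm.~\ref{thm514} to the trivial $C$-representation $W = C$: this gives $\calH(C) = (C \otimes_C \coc)^{G_\kinfty} = \coc^{G_\kinfty}$, and the isomorphism \eqref{eqchcoc} specializes to $\calH(C) \otimes_\hatkinfty \coc \simeq \coc$, forcing $\dim_\hatkinfty \calH(C) = 1$. Moreover, since the $G_K$-action on $C$ factors through the trivial $G_K$-action on $C$ (as a Galois representation of dimension one with trivial $\gammak$-quotient action after taking $\calH$), the construction in the proof of Thm.~\ref{thm514} identifies $\calH(C)$ with $\hatkinfty$ carrying the trivial $\gammak$-action. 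Combining these identifications gives $\gr^i \oldr \simeq \hatkinfty(i)$ as $\gammak$-representations, as required.

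The only mildly subtle point is verifying that the $\gammak$-action on $\calH(C) = \coc^{G_\kinfty}$ is genuinely trivial and not merely trivial up to a twist; this I would check directly using the explicit description $\coc \simeq C[V_1, \ldots, V_d]$ from Prop.~\ref{propucoor} and Notation~\ref{notaVi}, noting that the constants $C^{G_\kinfty} = \hatkinfty$ sit inside $\coc^{G_\kinfty}$ with trivial $\gammak$-action and account for the full one-dimensional invariant space.
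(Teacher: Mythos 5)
Your proof is correct and takes essentially the same route as the paper: take $G_\kinfty$-invariants of the filtration short exact sequence, use the vanishing in Lemma~\ref{lem621} to preserve exactness, identify $\gr^i\obdr$ with $\coc(i)$, and apply the Simpson correspondence (Thm.~\ref{thm514}/\ref{thmSimpson}) to the trivial representation $C(i)$ to get $(\coc(i))^{G_\kinfty}=\calH(C(i))=\hatkinfty(i)$. The one remark worth making is that the short exact sequence as displayed in the paper's proof has an indexing slip (the natural maps between the truncations $\obdr^{[-\infty,i]}$ and $\obdr^{[-\infty,i+1]}$ do not fit the stated sequence), whereas your version $0 \to \fil^{i+1}\obdr \to \fil^i\obdr \to \gr^i\obdr \to 0$ is the intended one; also, your closing discussion of why $\calH(C)$ is ``$\hatkinfty$ with trivial $\gammak$-action'' should more precisely say that $\calH(C)=\hatkinfty$ carries its natural semilinear Galois action, which is what the notation $\hatkinfty(i)$ presupposes.
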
 
\begin{proof}
Consider the short exact sequence 
$$0 \to \obdr^{[-\infty, i]} \to \obdr^{[-\infty, i+1]} \to \gr^i \obdr \to 0.$$ By Lem. \ref{lem621}, it is still short exact after taking $G_{\kinfty}$-invariants. Now note $\gr^i \obdr =\coc(i)$, and by Thm. \ref{thmSimpson}, $(\coc(i))^{G_\kinfty} =\calH(C(i))=\hatkinfty(i).$
\end{proof}

\begin{prop}\label{thmTSFontaine}
Let $X \in \rep_{\Gamma_K}(\fil^0 \oldr)$, define
$$X_f = \projlim_{s \geq 1} (X/t^s X)_f, $$
where $(X/t^s X)_f$ denotes the union of finite dimensional $K$-sub-vector spaces of  $X/t^s X$ stable under $\gammak$-action. Then 
\begin{enumerate}
\item $X_f$ is also the union of $\kinfty[[t]]$-sub-modules of $X$ stable under $\gammak$-action; furthermore, 
$X_f$ is an object in $\rep_{\Gamma_K}(\kinfty[[t]])$.

\item In addition, the natural map
$$X_f\otimes_{\kinfty[[t]]} \fil^0 \oldr \to X$$
is an isomorphism.

\end{enumerate}
\end{prop}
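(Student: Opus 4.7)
The strategy is $t$-adic induction, viewing the proposition as a $t$-adic lift of the decompletion equivalence $\rep_{\Gamma_K}(\kinfty)\simeq\rep_{\Gamma_K}(\hatkinfty)$ established in Cor.\ \ref{corequivkinfty}. The core claim to establish, by induction on $s\ge 1$, is that $(X/t^sX)_f$ is a finite free $\kinfty[[t]]/t^s$-module carrying a continuous semi-linear $\gammak$-action, and that the natural map
\[
(X/t^sX)_f \otimes_{\kinfty[[t]]/t^s} \fil^0\oldr/t^s\fil^0\oldr \longrightarrow X/t^sX
\]
is an isomorphism. Granting this for all $s$, the proposition will follow by passing to the inverse limit and using the $t$-adic completeness of $\fil^0\oldr$ together with the fact that $X$ is finite free over it.

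The base case $s=1$ is immediate from Prop.\ \ref{thmfon24}/Cor.\ \ref{corequivkinfty}, thanks to the identification $\fil^0\oldr/t\fil^0\oldr \simeq \gr^0\oldr \simeq \hatkinfty$ from Lem.\ \ref{lemgroldr}. For the inductive step, I would pick a $\kinfty[[t]]/t^s$-basis of $(X/t^sX)_f$, lift it arbitrarily to an $\fil^0\oldr/t^{s+1}$-basis $\{e_i\}$ of $X/t^{s+1}X$, and write the action as $\sigma(e)=a(\sigma)e$ with $a(\sigma)\in\GL_n(\fil^0\oldr/t^{s+1})$. By the inductive hypothesis, $a(\sigma)\bmod t^s$ lies in $\GL_n(\kinfty[[t]]/t^s)$, so I may decompose $a(\sigma)=\alpha(\sigma)+t^s\beta(\sigma)$ with $\alpha(\sigma)\in\GL_n(\kinfty[[t]]/t^{s+1})$ a lift and $\beta(\sigma)\in M_n(\hatkinfty)$, using the identification $t^s\fil^0\oldr/t^{s+1}\fil^0\oldr\simeq\hatkinfty$ (keeping track of the Tate twist from $\sigma(t)=\chi_p(\sigma)t$). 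The aim is to further modify the basis by an element of $I+t^sM_n(\hatkinfty)$ so as to absorb $\beta$ into $\alpha$, thereby forcing $a(\sigma)$ into $\GL_n(\kinfty[[t]]/t^{s+1})$.

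The main obstacle is showing that this absorption is always possible. A short matrix computation reduces it to solving a twisted coboundary equation for $\beta$ with coefficients in $\End_{\hatkinfty}(X/tX)(s)$, which is an object of $\rep_{\Gamma_K}(\hatkinfty)$. The obstruction class lies in $H^1(\gammak,\,\End(X/tX)(s)/(\End(X/tX)(s))_f)$, and so it vanishes as soon as the natural map
\[
H^i(\gammak, M_f) \longrightarrow H^i(\gammak, M)
\]
is an isomorphism for every $M \in \rep_{\Gamma_K}(\hatkinfty)$. This is a direct consequence of Cor.\ \ref{corequivkinfty} combined with Lem.\ \ref{lemliuzhu310}; alternatively one may verify that the Berger--Colmez Tate--Sen axioms (TS1)--(TS3) extend $t$-adically from $(\gammak,\hatkinfty,K_n,\hat t_{K_n})$ (cf.\ Rem.\ \ref{remts123}) to $(\gammak,\fil^0\oldr,K_n[[t]],\widetilde R_n)$ for a trace $\widetilde R_n$ built level by level on the graded pieces, and then invoke the abstract decompletion theorem. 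Either route yields the required lifting.

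Finally, one passes to the limit. The transition maps $(X/t^{s+1}X)_f \to (X/t^sX)_f$ are surjective by the inductive construction with kernel a finite free $\kinfty$-module, so $X_f=\projlim_s(X/t^sX)_f$ is a finite free $\kinfty[[t]]$-module of rank equal to $\rank_{\fil^0\oldr}X$. The base change isomorphism $X_f\otimes_{\kinfty[[t]]}\fil^0\oldr \xrightarrow{\sim} X$ holds modulo each $t^s$ by construction, and therefore in the limit by $t$-adic completeness of both sides. The alternative description in Item (1) as a union of $\kinfty[[t]]$-submodules then follows directly, since any $\gammak$-stable finite-dimensional $K$-subspace $V\subset X/t^sX$ generates a $\gammak$-stable finite-rank $\kinfty[[t]]/t^s$-submodule which is still contained in $(X/t^sX)_f$.
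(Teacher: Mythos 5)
Your proposal is correct but takes a genuinely different route from the paper for the inductive step. The paper (following Shimizu's \cite[Claim 2.12]{Shi18}, which in turn follows Fontaine) works with the short exact sequence $0\to t^{n-1}V\to V\to V/t^{n-1}V\to 0$, writes the action matrix as $T=T^0+t^{n-1}T^1$, picks a \emph{specific} $\gamma$ close to $1$ with $v_p(\chibar_p(\gamma)-1)$ large enough, and then directly applies the normalized Tate traces (from Rem.~\ref{remts123}) to solve the matrix equation for that one $\gamma$; stability under all of $\gammak$ then follows because $\gamma^{\zp}$ has finite index. You instead package the obstruction to absorbing $t^s\beta$ into $\alpha$ as a class in $H^1(\gammak,\,\End(X/tX)(s)/(\End(X/tX)(s))_f)$ and kill it by a general cohomology-comparison isomorphism $H^i(\gammak,M_f)\simeq H^i(\gammak,M)$ for $M\in\rep_{\Gamma_K}(\hatkinfty)$. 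That isomorphism is indeed deducible from Cor.~\ref{corequivkinfty} (surjectivity and injectivity of $H^1$ because twists and isomorphisms between them descend; $H^0$ is automatic), though calling it a ``direct consequence'' of Cor.~\ref{corequivkinfty} plus Lem.~\ref{lemliuzhu310} undersells the fact that it is really a restatement of the content of Tate--Sen decompletion; Lem.~\ref{lemliuzhu310} is not actually needed. What your route buys is a cleaner conceptual picture (the inductive step is literally controlled by the decompletion theorem already proved at the graded level), at the cost of a continuity bookkeeping on the quotient $\hatkinfty/\kinfty$ that the direct Tate-trace argument sidesteps. Your second alternative, verifying the Berger--Colmez axioms (TS1)--(TS3) for $(\gammak,\fil^0\oldr, K_n[[t]],\widetilde R_n)$ and invoking the abstract decompletion theorem, is also valid and again not the route the paper takes. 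Both your variants and the paper's argument agree on the $s=1$ base case via Lem.~\ref{lemgroldr} and Prop.~\ref{thmfon24}, and on the passage to the inverse limit.
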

\begin{proof}
If $K$ has perfect residue field, then this is \cite[Thm. 3.6]{Fon04}; the general case follows similar argument. Here, we briefly sketch the argument following the clear exposition of \cite[Prop. 2.11]{Shi18}. Indeed, it suffices to show:
\begin{itemize}
\item for each $s\geq 1$, the finite length representation $V=X/t^s X$ satisfies that $V_f \in \rep_{\gammak}(\kinfty[t]/t^s)$ and $V_f\otimes_{\kinfty[t]/t^s} \fil^0 \oldr/t^s \simeq V$. 
\end{itemize}
Note that when $s=1$, then this is precisely Prop. \ref{thmfon24}, because $ \fil^0 \oldr/(t) \simeq \hatkinfty$ by Lem. \ref{lemgroldr}. Suppose the above claim is true for $s\leq n-1$, and now consider $V=X/t^n X$. 
Let $ V'=t^{n-1}V$ and $V''=V/V'$, and consider the short exact sequence
$$0 \to V'  \to V \to V''  \to 0.$$
Pick any $\kinfty$-basis $\bar{v}_1, \cdots, \bar{v}_r$ of $V''_f$ (which is automatically a $\hatkinfty$-basis of $V''$ by induction hypothesis), and lift it to elements $v_1, \cdots, v_r \in V$, which by Nakayama Lemma, is a $ \fil^0 \oldr/t^n$-basis of $V$. We want to modify $v_i$ so that they fall into $V_f$. 
Indeed, given any $\gamma \in \gammak$, let $T$ be the matrix of its action with respect to the basis $v_1, \cdots, v_r$. As $\bar{v}_1, \cdots, \bar{v}_r$  is a basis of $V''_f$, the matrix $T$ is of the form
$$T =T^0 +t^{n-1}T^1, \quad \text{ where }  T^0 \in \Mat(\kinfty[t]/t^n),   \quad T^1 \in \Mat(\hatkinfty).$$
Let $U: =T^0 \pmod{t} \in \Mat(\kinfty)$ which has to be invertible since it is the matrix of $\gamma$ acting on $V/tV$. Choose a \emph{specific} $\gamma \neq 1$ close to $1$ so that the matrix satisfies $v_p(U-1) >c_3=\frac{p}{p-1}$. 
Then exactly the same argument of \cite[Claim 2.12]{Shi18} --the key ingredient being the normalized Tate traces, which we established in Rem. \ref{remts123}-- shows that after changing $v_1, \cdots, v_d$ via a matrix $1+t^{n-1}M$ with $M \in \Mat(\hatkinfty)$,  then the matrix of $\gamma$ acting on them falls inside $\Mat(\kinfty[t]/t^n)$, which quickly implies that these new basis elements --which we still denote as $v_i$--  are elements in $V_f$ (using the fact $\gamma^{\zp} \subset \gammak$ is of finite index). Finally, a standard argument shows these (new) $v_1, \cdots, v_d$ has to be a basis of $V_f$.
\end{proof}

\begin{cor}\label{corTSFontaine}
For $Y$ an object in $\rep_{\Gamma_K}(\kinfty[[t]])$, the base change $Y\otimes_{\kinfty[[t]]} \fil^0 \oldr$ is an object in $\rep_{\Gamma_K}(\fil^0 \oldr)$. This induces a tensor equivalence of categories: 
$$\rep_{\Gamma_K}(\kinfty[[t]]) \xrightarrow{\simeq } \rep_{\Gamma_K}(\fil^0 \oldr).$$
Similarly there is a  tensor equivalence of categories: 
$$\rep_{\Gamma_K}(\kinfty((t))) \xrightarrow{\simeq } \rep_{\Gamma_K}(\oldr).$$ 
\end{cor}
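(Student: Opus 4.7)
The plan is to package Prop. \ref{thmTSFontaine} into a categorical equivalence, then invert $t$ to get the $\oldr$-version. First I would check that the assignment $Y \mapsto Y \otimes_{\kinfty[[t]]} \fil^0 \oldr$ is well-defined, i.e.\ that the base change of a finite free $\kinfty[[t]]$-module with continuous semi-linear $\Gamma_K$-action is a finite free $\fil^0 \oldr$-module with continuous semi-linear $\Gamma_K$-action: freeness and rank are automatic, and continuity follows because the inclusion $\kinfty[[t]] \hookrightarrow \fil^0 \oldr$ is continuous and the $\Gamma_K$-action on $\fil^0 \oldr$ is continuous (inherited from $\obdr$).

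Next I would verify the two compositions are naturally isomorphic to the identity. Starting from $X \in \rep_{\Gamma_K}(\fil^0 \oldr)$, Prop. \ref{thmTSFontaine}(2) gives a natural isomorphism $X_f \otimes_{\kinfty[[t]]} \fil^0 \oldr \xrightarrow{\simeq} X$, so one direction is immediate. For the other direction, given $Y \in \rep_{\Gamma_K}(\kinfty[[t]])$, set $X := Y \otimes_{\kinfty[[t]]} \fil^0 \oldr$. There is an obvious $\Gamma_K$-equivariant inclusion $Y \hookrightarrow X_f$, since $Y$ itself is the union of its finite-dimensional $K$-subspaces stable under $\gammak$ (each finite $\kinfty[[t]]/t^s$-submodule generated by finitely many elements of $Y$ maps into $(X/t^s X)_f$). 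On the other hand, both $Y$ and $X_f$ are finite free $\kinfty[[t]]$-modules whose base change to $\fil^0 \oldr$ equals $X$; comparing ranks forces the inclusion to be an isomorphism. The key input here is the uniqueness part of Prop. \ref{thmTSFontaine}, namely that $X_f$ is \emph{the} $\kinfty[[t]]$-lattice in $X$ descending the $\Gamma_K$-action.

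For the tensor-functor property, base change obviously commutes with tensor products, and the induced tensor structure on the inverse functor is then forced. Compatibility with duality follows by the same formal argument: $(Y^\vee) \otimes \fil^0 \oldr \simeq (Y \otimes \fil^0 \oldr)^\vee$ in the category of finite free modules, and taking $(-)_f$ respects duals because $X_f$ is characterized intrinsically by Prop. \ref{thmTSFontaine}.

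Finally, to pass from $\fil^0 \oldr$ to $\oldr = \fil^0 \oldr[1/t]$ and from $\kinfty[[t]]$ to $\kinfty((t))$, I would simply invert $t$ on both sides. Given $X' \in \rep_{\Gamma_K}(\oldr)$, choose any $\fil^0 \oldr$-lattice $X \subset X'$ stable under $\Gamma_K$ (such a lattice exists because $\Gamma_K$ is compact and $X'$ carries a continuous action, using the $t$-adic topology); applying Prop. \ref{thmTSFontaine} to $X$ and then inverting $t$ gives $X_f[1/t] \otimes_{\kinfty((t))} \oldr \simeq X'$, and $X_f[1/t]$ is independent of the choice of lattice. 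The hard part of the whole argument is really already in Prop. \ref{thmTSFontaine}; the main thing to be careful about in the $\oldr$-step is producing a $\Gamma_K$-stable $\fil^0 \oldr$-lattice, which I expect follows from the standard compactness-plus-continuity argument applied to the $t$-adic topology on $\fil^0 \oldr$.
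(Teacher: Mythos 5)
The paper gives no proof of this corollary---it is treated as immediate from Prop.~\ref{thmTSFontaine}---and your write-up correctly spells out the only natural argument: use Prop.~\ref{thmTSFontaine} to show that base change and $(-)_f$ are mutually quasi-inverse on the $\fil^0\oldr$ level, then invert $t$. Two small points are worth tightening. First, the step ``comparing ranks forces the inclusion $Y\into X_f$ to be an isomorphism'' is too quick: an inclusion of finite free $\kinfty[[t]]$-modules of equal rank is not automatically an equality. You should instead observe that the composite $Y\otimes_{\kinfty[[t]]}\fil^0\oldr\to X_f\otimes_{\kinfty[[t]]}\fil^0\oldr\xrightarrow{\sim}X$ is the identity of $X$ (so in particular $Y\otimes\fil^0\oldr\to X_f\otimes\fil^0\oldr$ is surjective), and then, since $\kinfty[[t]]$ is a discrete valuation ring with uniformizer $t$ and $\fil^0\oldr$ is $t$-torsion-free (hence faithfully flat over $\kinfty[[t]]$), conclude that the finitely generated torsion module $X_f/Y$ must vanish. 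Second, the inclusion $Y\subseteq X_f$ is cleanest via Prop.~\ref{thmTSFontaine}(1) directly ($Y$ is a finite $\kinfty[[t]]$-submodule of $X$ stable under $\gammak$), rather than through the auxiliary discussion of finite-dimensional $K$-subspaces. Your treatment of the $t$-inverted statement, including the existence of a $\Gamma_K$-stable $\fil^0\oldr$-lattice by compactness and continuity, parallels how the paper handles the analogous lattice issue in the proof of Thm.~\ref{thmRH} and is fine.
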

%\begin{proof} This follows similar argument as \cite[Thm. 3.6, Thm. 3.9]{Fon04}. 
%But RH lift can use convenient Fontaine argument. Or Shimizu writing. and Thm. 3.9 for $((t))$ver\end{proof}

\subsection{Sen--Fontaine theory of Andreatta--Brinon}\label{subsecAB10}

By Prop. \ref{propucoor}, and note that $\Ht$ fixes the elements $[t_i^\flat]$, we have
$$ (\obdrplus)^{\Ht} = (\bdrplus)^{\Ht}[[u_1, \cdots, u_d]] = (\bdrplus)^{\Ht}[[tV_1, \cdots, tV_d]],$$
which then contains the ring $K^{(\pf)}_\infty[[t]][[tV_1, \cdots, tV_d]]$.

\begin{prop}  
There is a chain of tensor equivalences of categories
$$
\rep_\Gamma \left(K^{(\pf)}_\infty[[t, tV_1, \cdots, tV_d]] \right)
 \simeq  \rep_\Gamma\left( (\bdrplus)^{\Ht}[[tV_1, \cdots, tV_d]]  \right) \simeq  \rep_\gk\left(\obdrplus\right),$$
where all the functors are defined via base change.
\end{prop}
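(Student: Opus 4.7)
\emph{Plan.} Both equivalences will be proved by Galois descent combined with d\'evissage on the $m_\dR$-adic filtration, building on Brinon's Sen theory (Prop \ref{thmBrinonSen}) and the Sen-Fontaine theory for perfect residue fields (\S\ref{subsecSenFon}).

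First I would treat the right-hand equivalence $\rep_\Gamma((\bdrplus)^\Ht[[tV_1,\ldots,tV_d]]) \simeq \rep_{G_K}(\obdr^{+,\nc})$. The forward functor is base change and the quasi-inverse is $M \mapsto M^\Ht$. Since $\Ht = \gal(\overline K/K^{(\pf)}_\infty)$ fixes each $t_i^{1/p^m}$, it fixes $[t_i^\flat]$ and hence each $u_i = t_i \otimes 1 - 1 \otimes [t_i^\flat]$; combined with Prop \ref{propucoor} and the identifications of Notation \ref{notaVi}, this yields $(\obdr^{+,\nc})^\Ht = (\bdrplus)^\Ht[[u_1,\ldots,u_d]] = (\bdrplus)^\Ht[[tV_1,\ldots,tV_d]]$, matching the source ring. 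To check that the two functors are mutually quasi-inverse, I would d\'evisser any $M \in \rep_{G_K}(\obdr^{+,\nc})$ along the $m_\dR = \ker\theta_K$-adic filtration: by Prop \ref{propucoor} the graded pieces of $\obdr^{+,\nc}$ are finite direct sums of Tate twists of $C$, so each successive quotient $m_\dR^n M/m_\dR^{n+1}M$ lies in $\rep_{G_K}(C)$ and Brinon's theorem (Thm \ref{thmBrinonSen}) supplies $\Ht$-descent at each level. The cohomology vanishing $H^i(\Ht, M \otimes_{\obdr^{+,\nc}} \obdr^{[a,b]}) = 0$ for $i \geq 1$ provided by Lem \ref{lem621} then allows these descents to lift compatibly through the successive extensions, and passing to the inverse limit (using that $\obdr^{+,\nc}$ is $m_\dR$-adically complete by construction) reconstructs $M^\Ht$ as a finite free module over $(\obdr^{+,\nc})^\Ht$ whose base change to $\obdr^{+,\nc}$ recovers $M$.

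For the left-hand equivalence $\rep_\Gamma(K^{(\pf)}_\infty[[t,tV_1,\ldots,tV_d]]) \simeq \rep_\Gamma((\bdrplus)^\Ht[[tV_1,\ldots,tV_d]])$, I would observe that $(\bdrplus)^\Ht$ is the ring $\bbldrplus$ associated to the perfect-residue-field CDVF $\bbk = \kpf$, since after $p$-adic completion $\Ht$ plays the role of $G_{\bbkinfty}$ for this $\bbk$. Sen-Fontaine theory (\S\ref{subsecSenFon}) then supplies a base-change equivalence $\rep_{\Gamma_\kpf}(K^\pf_\infty[[t]]) \simeq \rep_{\Gamma_\kpf}((\bdrplus)^\Ht)$; combining this with Brinon's decompletion $\rep_\Gamma(K^{(\pf)}_\infty) \simeq \rep_\Gamma(\widehat{K^\pf_\infty})$ (Prop \ref{thmBrinonSen}(3)) at the residual $t=0$ level, and then lifting inductively through the $(t, tV_1, \ldots, tV_d)$-adic filtration via a standard d\'evissage, yields the desired equivalence. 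The hardest part will be tracking the $\Gamma$-action compatibilities across the descent, in particular the semidirect-product structure $\Gamma \simeq \Gamma_{K^{(\pf)}} \ltimes \Gamma_\geom$ and the nontrivial $\Gamma_\geom$-action on the $tV_i$ (cf.\ Notation \ref{notaVi}); one must verify that the \emph{a priori} only $\Gamma_\kpf \simeq \Gamma_{K^{(\pf)}}$-equivariant Sen-Fontaine equivalence extends compatibly to the full $\Gamma$-action that appears after adjoining the variables $tV_i$, and the cohomology-vanishing ingredients of Lem \ref{lem621} play the central role in ensuring that each graded piece of the d\'evissage preserves this extended equivariance.
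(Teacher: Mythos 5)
The paper does not actually prove this proposition: its ``proof'' is a one-line citation to \cite[Cor.~3.8, Thm.~3.23]{AB10} and to the review in \cite{MoritadR}. Your proposal attempts a self-contained derivation, which is a legitimate and potentially valuable thing to do, and your high-level plan (Galois descent via Brinon's Sen theory, d\'evissage to reduce to the known Sen and Sen--Fontaine equivalences) is in the right spirit of the Andreatta--Brinon argument. But as written it has two concrete problems.

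First, you appeal to Lemma~\ref{lem621} as the cohomology-vanishing engine for your d\'evissage, but that lemma concerns $\obdr^{[a,b]}=\fil^a\obdr/\fil^{b+1}\obdr$ with the $t$-adic filtration on the completed ring $\obdr$. The ring in this proposition is $\obdr^{+,\nc}$, and the filtration along which one must d\'evisser for the Andreatta--Brinon equivalence is the $m_{\dR}=\ker\theta_K$-adic one. These are different filtrations, and the paper stresses the distinction: Remark~\ref{remcompleteobdr} explains that $\Fil^\bullet\obdr^{\nc}$ is not complete, and Remarks~\ref{remRelationABBrinon} and~\ref{remRelationHRH} explicitly contrast the ``$\ker\theta$-adic lifting'' (Andreatta--Brinon, this proposition) with the ``$t$-adic lifting'' (the paper's Riemann--Hilbert correspondence, which is where Lemma~\ref{lem621} actually lives). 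The vanishing you need for the $m_{\dR}$-adic d\'evissage is available, but it must come from an analogue of Lemma~\ref{lemSimpcoho}(1) applied to the $m_\dR$-adic graded pieces $\gr^n_{m_\dR}\obdr^{+,\nc}$ (which by Prop.~\ref{propucoor} are finite sums of Tate twists of $C$), not from Lemma~\ref{lem621}.

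Second, for the left-hand equivalence you correctly identify $(\bdrplus)^{H_{\underline t}}=\bbldrplus$ for $\bbk=\kpf$ and correctly flag that the classical Sen--Fontaine equivalence only gives $\Gamma_{K^{(\pf)}}\simeq\Gamma_{\kpf}$-equivariance, whereas the claimed equivalence must be $\Gamma$-equivariant with $\Gamma_\geom$ acting nontrivially on both $K^{(\pf)}_\infty$ and the $tV_i$. You acknowledge this is ``the hardest part'' but do not actually resolve it; the assertion that ``the cohomology-vanishing ingredients of Lem.~\ref{lem621} play the central role in ensuring that each graded piece of the d\'evissage preserves this extended equivariance'' is not concrete -- again Lemma~\ref{lem621} is about $G_{\kinfty}$- and $H_{\underline t}$-cohomology over the $t$-adic filtration of $\obdr$, which is not the filtration you propose to use here, and it says nothing about how to extend a $\Gamma_{K^{(\pf)}}$-equivariant isomorphism to a $\Gamma$-equivariant one. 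This is the genuine gap: the full semidirect-product equivariance is precisely what the Andreatta--Brinon argument must handle, and your sketch stops exactly where the real work begins.
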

\begin{proof}
These are proved  in \cite[Cor. 3.8, Thm. 3.23]{AB10} in the (affine) \emph{relative} case; similar argument renders the imperfect residue field case, as reviewed in \cite{MoritadR}.
\end{proof}

\begin{remark}
 Let $M\in \rep_\Gamma (K^{(\pf)}_\infty[[t, tV_1, \cdots, tV_d]]) $, one can define Lie algebra actions and hence introduce various differential modules, cf. \cite[\S 4]{AB10} and \cite[\S 3.1]{MoritadR}. Indeed, Morita's proof of Thm. \ref{thmmorita} makes key use of these differential modules.
\end{remark}

\begin{rem} \label{remRelationABBrinon}
  The relation of Andreatta--Brinon's theory and Brinon's theory can be summarized in the following.
\begin{equation*}
\begin{tikzcd}
                                      & \rep_{G_K}(C) \arrow[r, "\simeq"]                                        & \rep_{\Gamma}({\widehat{K^\pf_\infty}}) \arrow[r, "\simeq"]                                           & \rep_{\Gamma}({{K^{(\pf)}_\infty}})                                                \\
\rep_{G_K}(\qp) \arrow[ru] \arrow[rd] &                                                                          &                                                                                                       &                                                                                    \\
                                      & {\rep_{G_K}(\obdrplus)} \arrow[uu, "\theta_K"'] \arrow[r, "\simeq"] & {\rep_{\Gamma}( (\obdrplus)^\Ht)} \arrow[uu, "\theta_K"'] \arrow[r, "\simeq"] & {\rep_{\Gamma}(K^{(\pf)}_\infty[[t, tV_1, \cdots, tV_d]])} \arrow[uu, "\theta_K"']
\end{tikzcd}
\end{equation*}
\end{rem}

 \section{$p$-adic Riemann--Hilbert correspondence and de Rham rigidity}
 \label{secRH}

 In this section, we construct the $p$-adic Riemann--Hilbert correspondence together with its decompleted version, and use it to prove a de Rham rigidity theorem.
 
\subsection{$p$-adic Riemann--Hilbert correspondence in the imperfect residue field case}
Recall in Construction \ref{consnablaobdr}, we have defined a connection operator   over the \emph{ring} $\obdr$. (In particular, our situation is simpler  than that in \cite[\S 3]{LZ17}, as we do not need to deal with sheaves.)

\begin{defn}
Let $\mathrm{MIC}_{\gammak}(\oldr)$ be the category where an object is some $M\in \rep_{\Gamma_K}(\oldr)$ equipped with:
\begin{itemize}
\item a $\Gamma_K$-stable $\mathbb Z$-filtration $\fil^i M$ such that $\fil^i M =t^i \fil^0 M, \forall i \in \mathbb Z$
\item a $\Gamma_K$-equivariant integrable connection $\nabla:  M \to M\otimes_K \wh{\Omega}_K$ satisfying Griffiths transversality, namely $\nabla (\fil^i M) \subset \fil^{i-1}M \otimes_K \wh{\Omega}_K, \forall i$.
\end{itemize}
\end{defn}

\begin{theorem}\label{thmRH}
For $ W\in \rep_{G_K}({\bdr})$, define 
$$  \crh(W):= (W\otimes_{\bdr} \obdr)^{G_{\kinfty}}$$
\begin{enumerate}
\item The above rule defines a functor 
$$\crh: \rep_{G_K}({\bdr}) \to  \rep_{\Gamma_K}(\oldr)$$
Via the equivalence in Cor. \ref{corTSFontaine}, we can define a functor 
$$\rrh:   \rep_{G_K}({\bdr}) \to  \rep_{\Gamma_K}(\kinfty((t))) $$

\item These functors satisfy base change properties with respect to the embedding $K \into L$, in the sense that we have canonical isomorphisms
\begin{align*}
\crh(W) \otimes_{ \mathcal{O}\mathbf{L}_\dR}  \mathcal{O}\mathbf{L}_{\dR, L} &\simeq \crh_{L}(W|_{G_L}\otimes_\bdr \mathbf{B}_{\dR, L}),\\
 \rrh(W) \otimes_{\kinfty((t))} L_\infty((t)) &\simeq \rrh_{L}(W|_{G_L}\otimes_{\bdr} \mathbf{B}_{\dR, L}),
\end{align*} 
where $W|_{G_L}\otimes_{\bdr} \mathbf{B}_{\dR, L}$ is regarded as an object in $\rep_{G_L}( \mathbf{B}_{\dR, L})$.

\item The functor $\crh$ can be upgraded to a functor
$$\rep_{G_K}({\bdr})\to \mathrm{MIC}_{\gammak}(\oldr).$$

\item The functors $\crh$ (as well as its upgraded version) and $\rrh$ are tensor functors and are compatible with duality.

\end{enumerate} 
\end{theorem}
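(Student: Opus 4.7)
The plan is to deduce the Riemann-Hilbert correspondence $\crh$ from the Simpson correspondence $\calH$ (Theorem \ref{thmSimpson}) by a dévissage along the $t$-adic filtration on $\obdr$, lifting the equivalence one $t$-power at a time. Working with the ring $\obdr$ rather than $\obdr^{\nc}$ is essential here: as flagged in Remark \ref{remcompleteobdr}, it is precisely the $t$-adic completeness of $\fil^0 \obdr$ (and of $\fil^0 \oldr$) that legitimizes passing to the inverse limit at the end of the argument.

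First I would construct an integral version. Given $W \in \rep_{G_K}(\bdr)$, compactness of $G_K$ supplies a $G_K$-stable $\bdrplus$-lattice $W^+ \subseteq W$, and I set
\[\crh^+(W^+) := (W^+ \otimes_{\bdrplus} \fil^0 \obdr)^{G_{\kinfty}}.\]
The goal is to show that $\crh^+(W^+)$ is finite free of rank $r = \dim_{\bdr} W$ over $\fil^0 \oldr$ and that the natural map $\crh^+(W^+) \otimes_{\fil^0 \oldr} \fil^0 \obdr \to W^+ \otimes_{\bdrplus} \fil^0 \obdr$ is an isomorphism; inverting $t$ then produces $\crh(W)$ with the analogous statement over $\oldr$, and Corollary \ref{corTSFontaine} yields the decompleted version $\rrh$.

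The dévissage proceeds as follows. For each $n \geq 1$, tensoring $0 \to \gr^{n-1} \obdr \to \fil^0 \obdr / t^n \to \fil^0 \obdr / t^{n-1} \to 0$ with the flat $\bdrplus$-module $W^+$ and identifying $\gr^{n-1} \obdr \simeq \coc(n-1)$ turns the leftmost term into $(W^+/tW^+) \otimes_C \coc(n-1)$. By Lemma \ref{lem621}, its $H^1(G_{\kinfty},-)$ vanishes, so the sequence stays exact after $G_{\kinfty}$-invariants, and the leftmost invariants become $\calH(W^+/tW^+ \otimes_C C(n-1))$, which is free of rank $r$ over $\hatkinfty$ by Theorem \ref{thmSimpson}. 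Combined with Lemma \ref{lemgroldr}, which identifies $\gr^{n-1} \fil^0 \oldr \simeq \hatkinfty(n-1)$, an induction on $n$ shows that $(W^+ \otimes_{\bdrplus} \fil^0 \obdr / t^n)^{G_{\kinfty}}$ is free of rank $r$ over $\fil^0 \oldr / t^n$ with the required base-change property; Lemma \ref{lem621} also gives the Mittag--Leffler surjectivity needed to pass to the inverse limit over $n$, yielding the claim for $\crh^+(W^+)$.

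For (2), I would run the same dévissage over $L$, invoking the base-change property of $\calH$ in Theorem \ref{thmSimpson}(2) together with Corollary \ref{corhiggsbasechange} at each graded piece; everything is compatible because $\obdr \to \mathcal{O}\mathbf{B}_{\dR, L}$ respects the $t$-adic filtration. For (3), the filtration on $\crh(W)$ is inherited from that on $\obdr$, and the connection $\nabla : \crh(W) \to \crh(W) \otimes_K \wh{\Omega}_K$ is the restriction of the connection on $\obdr$ from Construction \ref{consnablaobdr}; it is $\bdr$-linear (hence descends along $\otimes_{\bdr} \obdr$), $G_K$-equivariant, integrable, and satisfies Griffiths transversality, all of which pass to the $G_{\kinfty}$-invariants. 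Item (4) is formal as in the proof of Theorem \ref{thmSimpson}(4): tensor compatibility is checked after base change along $\fil^0 \oldr \to \fil^0 \obdr$ and reduced modulo $t$ to the Simpson case, and duality is handled by the same modulo-$t$ reduction. The main obstacle I foresee is the careful bookkeeping of the two interacting filtrations and ensuring Mittag--Leffler at each stage of the limit — exactly the issue that forced the switch from $\obdr^{\nc}$ to $\obdr$ in Remark \ref{remcompleteobdr}.
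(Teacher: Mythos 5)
Your proposal matches the paper's proof in both strategy and key ingredients: choose a $G_K$-stable $\bdrplus$-lattice, perform a $t$-adic dévissage whose base case is the twisted Simpson correspondence of Theorem \ref{thmSimpson}, pass to the limit using the vanishing from Lemma \ref{lem621} and the $t$-adic completeness of $\obdr$, invert $t$, and then deduce (2)--(4) formally. The only cosmetic difference is that the paper phrases the induction uniformly in terms of the windows $\obdr^{[a,b]}$ while you work mod $t^n$ and appeal to Mittag--Leffler, and for (2) the paper's intent (as in Theorem \ref{thmSimpson}(2)) is the more direct route of base-changing the $G_K$-equivariant isomorphism $\crh(W)\otimes_{\oldr}\obdr \simeq W\otimes_{\bdr}\obdr$ and taking $G_{L_\infty}$-invariants rather than invoking Corollary \ref{corhiggsbasechange}; both work.
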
 

\begin{rem}
By abuse of terminology, we call both $\crh$ and its upgraded version the $p$-adic Riemann--Hilbert correspondence. (Indeed, the connection on $\crh(W)$ does not play any further role in this paper.) We call the functor $\rrh$  the decompleted $p$-adic Riemann--Hilbert correspondence.
 \end{rem}

\begin{proof}
Since $G_K$ is compact, one can easily show any $ W\in \rep_{G_K}({\bdr})$ admits a $G_K$-stable $\bdrplus$-lattice, cf.  the discussion above \cite[Thm. 3.9]{Fon04}. Hence for Item (1), it suffices to show for any $U \in \rep_{G_K}({\bdrplus})$ and for all $-\infty \leq a \leq b \leq +\infty$, the space 
$$
\crh^{[a, b]}(U):=H^0(G_{\kinfty}, U\otimes_{\bdrplus} \obdr^{[a, b]})
$$
 is a finite free module over  $\oldr^{[a, b]}:=(\obdr^{[a, b]})^{G_\kinfty}$, and the natural map
$$ \crh^{[a, b]}(U)\otimes_{\oldr^{[a, b]}}  \obdr^{[a, b]} \to U\otimes_{\bdrplus} \obdr^{[a, b]}$$
is a $G_K$-equivariant isomorphism.
The case when $a=b$ is precisely the (twisted) Simpson correspondence in Thm. \ref{thmSimpson}. The general case then follows  by standard d\'evissage argument, by making use of  the cohomology vanishing result of Lem. \ref{lem621}(2). Note that again, just as in Lem. \ref{lem621}, it is crucial we use the  \emph{filtration complete} ring $\obdr$.

 Items (2)-(4) are formal consequences of the above construction, and follows similar argument as in Thm. \ref{thmSimpson}.
\end{proof}

 \begin{cor}
 for $ U\in \rep_{G_K}(\bdrplus)$, denote 
$$  \crh^+(U):= \crh^{[0, +\infty]}(U)= (U\otimes_\bdrplus \fil^0 \obdr)^{G_{\kinfty}}.$$
It defines a tensor functor
$$\crh^+: \rep_{G_K}(\bdrplus) \to  \rep_{\Gamma_K}(\fil^0 \mathcal{O}\mathbf{L}_\dR).$$
Uia equivalence in Cor. \ref{corTSFontaine}, we have another tensor functor
$$\rrh^+: \rep_{G_K}(\bdrplus) \to  \rep_{\Gamma_K}(\kinfty[[t]]).$$
Then we have the following canonical isomorphisms:
\begin{align*}
\crh^+(U)/t & \simeq \calH(U\otimes_\bdrplus C), \\
\rrh^+(U)/t & \simeq  \mathrm{H}(U\otimes_\bdrplus C).
\end{align*} 
\end{cor}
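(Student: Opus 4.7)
The plan is to obtain the first displayed isomorphism by reducing modulo $t$ in a short exact sequence, and then transport it to the $\rrh^+$-version via Corollary~\ref{corTSFontaine}. The claim that $\crh^+$ and $\rrh^+$ are tensor functors is just the $[a,b]=[0,+\infty]$-case of the construction in the proof of Theorem~\ref{thmRH}: one shows that $\crh^+(U)$ is a finite free $\fil^0\oldr$-module and that the natural map $\crh^+(U)\otimes_{\fil^0\oldr}\fil^0\obdr \to U\otimes_\bdrplus\fil^0\obdr$ is a $G_K$-equivariant isomorphism, exactly as in the d\'evissage there. The $t$-adic completeness of $\fil^0\obdr$ (cf. Remark~\ref{remcompleteobdr}) is what makes the $[0,+\infty]$-case go through.

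For the mod-$t$ computation, I start from the short exact sequence of $G_K$-equivariant $\bdrplus$-modules
$$0 \to \fil^1\obdr \to \fil^0\obdr \to \coc \to 0,$$
in which $\fil^1\obdr = t\cdot\fil^0\obdr$. Tensoring with the finite free $\bdrplus$-module $U$ preserves exactness; taking $G_\kinfty$-invariants and applying Lemma~\ref{lem621}(2) with $[a,b]=[1,+\infty]$ to kill the connecting $H^1$, I obtain
$$0 \to (U\otimes_\bdrplus \fil^1\obdr)^{G_\kinfty} \to \crh^+(U) \to (U\otimes_\bdrplus \coc)^{G_\kinfty} \to 0.$$
Since $t$ is $G_K$-fixed and $U\otimes_\bdrplus\fil^0\obdr$ is $t$-torsion free, the left-hand term equals $t\cdot\crh^+(U)$. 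Setting $W := U\otimes_\bdrplus C \in \rep_{G_K}(C)$, the canonical identification $U\otimes_\bdrplus \coc \simeq W\otimes_C \coc$ turns the right-hand term into $\calH(W)$ by definition. This yields $\crh^+(U)/t \simeq \calH(W)$.

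The second isomorphism is then formal. Under the equivalence $\rep_{\Gamma_K}(\kinfty[[t]]) \simeq \rep_{\Gamma_K}(\fil^0\oldr)$ of Corollary~\ref{corTSFontaine}, the inclusion $\kinfty[[t]] \hookrightarrow \fil^0\oldr$ sends $t$ to $t$ and reduces mod $t$ to $\kinfty \hookrightarrow \hatkinfty$, which in turn induces the equivalence $\rep_{\Gamma_K}(\kinfty) \simeq \rep_{\Gamma_K}(\hatkinfty)$ of Corollary~\ref{corequivkinfty} intertwining $\mathrm{H}$ and $\calH$. Reducing the defining isomorphism $\rrh^+(U)\otimes_{\kinfty[[t]]}\fil^0\oldr \simeq \crh^+(U)$ modulo $t$ therefore converts the first isomorphism into $\rrh^+(U)/t \simeq \mathrm{H}(W)$. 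I do not expect any real obstacle: the cohomology vanishing of Lemma~\ref{lem621}(2) is calibrated for precisely this case, and compatibility of $\crh^+$, $\rrh^+$ with tensor products and duality is inherited from the corresponding properties of $\crh$ and $\rrh$ in Theorem~\ref{thmRH}(4).
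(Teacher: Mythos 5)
Your proof is correct and follows essentially the route the paper intends: the paper leaves this corollary without a written proof as an immediate byproduct of the d\'evissage in Theorem~\ref{thmRH}, and your short exact sequence $0\to\fil^1\obdr\to\fil^0\obdr\to\coc\to0$ together with the vanishing from Lemma~\ref{lem621}(2) applied to $[1,+\infty]$ is exactly the intended mechanism. One small slip in the write-up: $t$ is not $G_K$-fixed (it scales by $\chi_p$ under $G_K$); it is $G_{\kinfty}$-fixed, which is all your argument actually needs since you are taking $G_{\kinfty}$-invariants.
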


\begin{remark}\label{remRelationHRH}
The relations of the Simpson and Riemann--Hilbert correspondences can be summarized in the following diagram.
 \begin{equation*}
\begin{tikzcd}
                                      & \rep_{G_K}(C) \arrow[r]                              & \rep_{\Gamma_K}(\hatkinfty) \arrow[r, "\simeq"]                                             & \rep_{\Gamma_K}(\kinfty)                              \\
\rep_{G_K}(\qp) \arrow[ru] \arrow[rd] &                                                      &                                                                                             &                                                       \\
                                      & \rep_{G_K}(\bdrplus) \arrow[r] \arrow[uu, "\bmod t"] & \rep_{\Gamma_K}(\fil^0 \mathcal{O}\mathbf{L}_\dR) \arrow[r, "\simeq"] \arrow[uu, "\bmod t"] & {\rep_{\Gamma_K}(\kinfty[[t]])} \arrow[uu, "\bmod t"]
\end{tikzcd}
 \end{equation*}
 
% It is not clear if there should be a functor from  $\rep_\gk(\fil^0 \obdr)$ to $\rep_{\Gamma_K}(\fil^0 \mathcal{O}\mathbf{L}_\dR)$, e.g, by simply taking $G_\kinfty$-invariant. Similarly, it is not 
\end{remark}
 
 \subsection{de Rham rigidity}\label{subsecdR}
\begin{defn}
 For $W \in \rep_{G_K}(\bdr)$, define 
$$D_{\dR, K}(W): = (W\otimes_\bdr \obdr)^{G_K}.$$ 
\end{defn}

 \begin{prop} \label{prop721}
   Let $W \in \rep_{G_K}(\bdr)$.
 \begin{enumerate}
 \item For each $i\geq 0$, the cohomology space $H^i(\gammak, \crh(W))$ is a finite dimensional $K$-vector space, and vanishes when $i \geq 2$. 
 
 \item Furthermore, the natural map
$$ H^i(\gammak, \crh(W))\otimes_K {L} \to H^i(\Gamma_{L}, \crh_{L}(W|_{G_L}\otimes_{\bdr} \mathbf{B}_{\dR, L})) $$ is an isomorphism.
Hence in particular 
\begin{equation*}
D_{\dR, K}(W) \otimes_K {L} \simeq D_{\dR, {L}} (W|_{G_L}\otimes_{\bdr} \mathbf{B}_{\dR, L})
\end{equation*}
 \end{enumerate}
\end{prop}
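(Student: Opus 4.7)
My plan is to choose a $G_K$-stable $\bdrplus$-lattice $U \subset W$ (whose existence is recalled in the proof of Thm.~\ref{thmRH}), so that $W = U[1/t]$ and $\crh(W) = \crh^+(U)[1/t]$. Since continuous $\gammak$-cohomology commutes with filtered colimits (and $\gammak \cong \Zp$ has cohomological dimension one), it suffices to prove both items for $\crh^+(U)$. The key structural ingredient is that $\crh^+(U)$ carries a natural $t$-adic filtration $\{t^n\crh^+(U)\}_{n \geq 0}$, whose graded pieces are canonically identified, via the Corollary following Thm.~\ref{thmRH} together with Lem.~\ref{lemgroldr}, with the twisted Hodge--Tate objects $\calH(U/tU)(n) \in \rep_{\gammak}(\hatkinfty)$. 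By Cor.~\ref{corhiggsbasechange}, each $H^i(\gammak, \calH(U/tU)(n))$ is a finite-dimensional $K$-vector space, vanishes for $i \geq 2$, and satisfies the base change isomorphism to $L$.

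Next, I would carry out a d\'evissage. Using the short exact sequences
\[
0 \to \calH(U/tU)(n) \to \crh^+(U)/t^{n+1} \to \crh^+(U)/t^n \to 0,
\]
the long exact sequence in $\gammak$-cohomology, the five lemma, and the exactness of $(-)\otimes_K L$, a straightforward induction on $n$ shows that each $H^i(\gammak,\crh^+(U)/t^n)$ is finite-dimensional over $K$, vanishes for $i \geq 2$, and satisfies the base change isomorphism over $L$. One then passes to the limit using the Milnor short exact sequence
\[
0 \to {\textstyle\projlim}^{1}\, H^{i-1}(\gammak,\crh^+(U)/t^n) \to H^i(\gammak,\crh^+(U)) \to {\textstyle\projlim}\, H^i(\gammak,\crh^+(U)/t^n) \to 0,
\]
and finally inverts $t$: since $\crh(W) = \varinjlim_n t^{-n}\crh^+(U)$ and continuous $\gammak$-cohomology commutes with filtered colimits, the statements for $\crh^+(U)$ transfer to $\crh(W)$. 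The last assertion of Item (2) is the $i=0$ case: the Hochschild--Serre spectral sequence for $1 \to G_{\kinfty} \to G_K \to \gammak \to 1$ applied to $W \otimes_{\bdr}\obdr$ collapses (thanks to $H^q(G_\kinfty, W \otimes_\bdr \obdr) = 0$ for $q \geq 1$, which follows from Lem.~\ref{lem621}(2) via a direct limit in $t$), identifying $H^0(\gammak,\crh(W)) \simeq D_{\dR,K}(W)$.

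\textbf{The main obstacle} is the limit step: one must verify that $\varprojlim H^i(\gammak,\crh^+(U)/t^n)$ remains finite-dimensional over $K$ and that $\projlim^{1}$ vanishes, with both properties compatible with base change to $L$. The plan is to show a uniform bound
\[
\dim_K H^i(\gammak,\crh^+(U)/t^n) \;\leq\; \sum_{j \geq 0} \dim_K H^i(\gammak, \calH(U/tU)(j)),
\]
and that the right-hand sum is finite. The finiteness of this sum uses Thm.~\ref{thm526semisimple}(1): the Sen operator on $\mathrm{H}(U/tU)$ has only finitely many eigenvalues, and after Tate twist by $(j)$ the twisted Sen operator becomes invertible for all but finitely many $j$, forcing both $H^0$ and $H^1$ to vanish for large $j$. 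Consequently the inverse system of finite-dimensional $K$-vector spaces $H^i(\gammak,\crh^+(U)/t^n)$ has bounded dimension and stabilizes for $n \gg 0$, which gives both finite-dimensionality of the limit and vanishing of $\projlim^1$. The stabilization index may be chosen uniformly for $K$ and $L$ because the Simpson correspondence is compatible with base change (Thm.~\ref{thmSimpson}(2)), so the Sen operator on $\mathrm{H}_L(U/tU \otimes_C C_L)$ has the same eigenvalues as that on $\mathrm{H}(U/tU)$. This makes the finite-level base change pass cleanly to the limit, completing the argument.
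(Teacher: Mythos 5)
Your proposal is correct and takes essentially the same approach as the paper: choose a $G_K$-stable $\bdrplus$-lattice $U$ with $V = U/tU$, dévisse along the $t$-adic filtration whose graded pieces are the Tate twists $\calH(V(j))$, and invoke Cor.~\ref{corhiggsbasechange} for those graded pieces. The paper condenses your careful handling of the limit/colimit (stabilization of the inverse system controlled by the finitely many Sen eigenvalues, $\projlim^{1}$-vanishing, and finally inverting $t$) into the single phrase ``standard dévissage,'' but you have correctly identified that this stabilization is the technical point that makes finiteness of $H^i(\gammak,\crh(W))$ and its compatibility with base change to $L$ go through.
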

\begin{proof}
Let $U  \in \rep_{G_K}(\bdrplus)$ be a $G_K$-stable lattice of $W$, and let $V=U/tU \in \rep_{G_K}(C)$. Let $V(j)$ denote the Tate twists.
 By standard d\'evissage, it suffices to prove for any $j \in \mathbb Z$ we have:
\begin{itemize}
\item  $H^i(\gammak, \ch(V(j))$ is a finite dimensional $K$-vector space, and vanishes when $i \geq 2$.

\item  Furthermore, the natural map
$$ H^i(\gammak, \ch(V(j)))\otimes_K {L} \to H^i(\Gamma_{L}, \ch_{L}(V(j)|_{G_{L}})\otimes_C C_L)) $$ is an isomorphism.
\end{itemize} 
These follow from Cor. \ref{corhiggsbasechange}.
\end{proof}

\begin{cor}   
For $W \in \rep_{G_K}(\bdr)$, $D_{\dR, K}(W)$ is a $K$-vector space such that 
$$\dim_K D_{\dR, K}(W) \leq \dim_\bdr W.$$
\end{cor}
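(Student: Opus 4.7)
The plan is to mirror the proof of Corollary \ref{cor523} in the Hodge-Tate setting, exploiting the base change property just established in Proposition \ref{prop721}(2). First I would apply Proposition \ref{prop721}(2) to the embedding $i: K \hookrightarrow \kpf$ of Example \ref{notaspecialbbk}, obtaining a canonical isomorphism of $\kpf$-vector spaces
$$D_{\dR, K}(W) \otimes_K \kpf \;\simeq\; D_{\dR, \kpf}\bigl(W|_{G_{\kpf}} \otimes_\bdr \mathbf{B}_{\dR, \kpf}\bigr).$$
Since $K \hookrightarrow \kpf$ is a (faithfully flat) field extension, this gives $\dim_K D_{\dR, K}(W) = \dim_{\kpf}(D_{\dR, K}(W) \otimes_K \kpf)$, so it suffices to bound the right-hand side by $\dim_\bdr W$, which equals $\dim_{\mathbf{B}_{\dR, \kpf}}(W|_{G_\kpf} \otimes_\bdr \mathbf{B}_{\dR, \kpf})$.

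The residue field of $\kpf$ is perfect, so the right-hand side falls within the classical de Rham formalism of Fontaine, for which the bound is well known (cf.\ \cite[Thm.\ 3.9]{Fon04}). This is exactly the analogue of how Corollary \ref{cor523} reduced the Hodge-Tate bound to the perfect residue field case via base change. The fact that $D_{\dR,K}(W)$ is a $K$-vector space is automatic from the $K$-linear action on $\obdr$.

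The main obstacle for this statement is not in the present argument but in the inputs it relies on: the base change property Proposition \ref{prop721}(2), which in turn rested on the Simpson correspondence Theorem \ref{thmSimpson} combined with a d\'evissage using the cohomology vanishing of Lemma \ref{lem621}. Given those, the corollary is a formal consequence.

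Alternatively, a coordinate-free variant avoiding $\kpf$ would argue directly via the decompleted Riemann-Hilbert correspondence: $\rrh(W)$ is a free $\kinfty((t))$-module of rank $n = \dim_\bdr W$ with a continuous semi-linear $\Gamma_K$-action, and an elementary computation using that $\Gamma_K$ acts on $t$ through the cyclotomic character with infinite image gives $\kinfty((t))^{\Gamma_K} = K$. Standard semi-linear algebra (the map $M^{\Gamma_K} \otimes_K \kinfty((t)) \to M$ is injective for any such $M$) then yields $\dim_K \rrh(W)^{\Gamma_K} \leq n$, and unwinding the definitions identifies $\rrh(W)^{\Gamma_K}$ with $\crh(W)^{\Gamma_K} = (W \otimes_\bdr \obdr)^{G_K} = D_{\dR, K}(W)$.
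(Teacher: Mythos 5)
Your primary argument is exactly the paper's proof: the paper proves this corollary by citing ``argue similarly as Cor.~\ref{cor523}, using Prop.~\ref{prop721}'', and Cor.~\ref{cor523} in turn applies the base change isomorphism to $K \hookrightarrow \kpf$ and invokes the classical Fontaine bound for the perfect residue field case. Your alternative coordinate-free route via $\rrh(W)$ and the semi-linear algebra inequality $\dim_K M^{\Gamma_K} \le \operatorname{rk} M$ over the field $\kinfty((t))$ (using $\kinfty((t))^{\Gamma_K}=K$) is also sound and arguably more self-contained, but it is not what the paper does; note that it quietly requires checking $\rrh(W)^{\Gamma_K}=\crh(W)^{\Gamma_K}$, which holds because $\crh(W)\simeq\rrh(W)\otimes_{\kinfty((t))}\oldr$ and one can run the same invariants argument over $\oldr$ with $(\oldr)^{\Gamma_K}=K$ (the case $W=\bdr$ of the corollary).
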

\begin{proof}
Argue similarly as Cor. \ref{cor523}, using Prop. \ref{prop721}. 
\end{proof}

 \begin{defn}
Say $W \in \rep_{G_K}(\bdr)$ is de Rham if
$$\dim_K D_{\dR, K}(W) = \dim_\bdr W.$$ 
Say $U \in \rep_{G_K}(\bdrplus)$ is de Rham if $U[1/t]$ is so.
 \end{defn}

\begin{theorem}  \label{thmdrrig} 
\begin{enumerate}
\item Let $W \in \rep_{G_K}(\bdr)$. Then $W$ is de Rham if and only $W|_{G_L}\otimes_{\bdr} \mathbf{B}_{\dR, L}$ is de Rham.
\item  Let $V \in \rep_{G_K}(\qp)$, then $V$ is de Rham if and only $V|_{G_{L}}$ is de Rham.
\end{enumerate}
\end{theorem}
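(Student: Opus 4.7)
The plan is to deduce this theorem as an essentially formal consequence of the base-change isomorphism
\[
D_{\dR, K}(W) \otimes_K L \;\simeq\; D_{\dR, L}(W|_{G_L}\otimes_{\bdr} \mathbf{B}_{\dR, L})
\]
established in Proposition \ref{prop721}(2), much as the Hodge-Tate rigidity in \S\ref{subsecHT} was deduced from Proposition \ref{propDHTBC}. First I would reduce Item (2) to Item (1): given $V\in \rep_{G_K}(\qp)$, set $W := V\otimes_\qp \bdr \in \rep_{G_K}(\bdr)$. Then the identifications
\[
(W\otimes_{\bdr}\obdr)^{G_K} \;=\; (V\otimes_\qp \obdr)^{G_K}, \qquad W|_{G_L}\otimes_{\bdr}\mathbf{B}_{\dR,L} \;=\; V|_{G_L}\otimes_{\qp}\mathbf{B}_{\dR,L},
\]
together with $\dim_{\bdr} W = \dim_{\qp} V$ and the analogous identity for $L$, translate the de Rhamness of $V$ (resp.\ $V|_{G_L}$) into that of $W$ (resp.\ of $W|_{G_L}\otimes_{\bdr}\mathbf{B}_{\dR,L}$). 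Hence (2) follows from (1).

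For Item (1), the verification is purely a dimension count. On the one hand, base-changing $W$ along $\bdr\to \mathbf{B}_{\dR,L}$ preserves rank, so
\[
\dim_{\mathbf{B}_{\dR,L}}\!\bigl(W|_{G_L}\otimes_{\bdr}\mathbf{B}_{\dR,L}\bigr) \;=\; \dim_{\bdr} W.
\]
On the other hand, Proposition \ref{prop721}(2) gives a canonical $L$-linear isomorphism $D_{\dR,K}(W)\otimes_K L \simeq D_{\dR,L}(W|_{G_L}\otimes_{\bdr}\mathbf{B}_{\dR,L})$, so
\[
\dim_L D_{\dR,L}\!\bigl(W|_{G_L}\otimes_{\bdr}\mathbf{B}_{\dR,L}\bigr) \;=\; \dim_K D_{\dR,K}(W).
\]
Combining these two equalities, the de Rham inequality $\dim_K D_{\dR,K}(W)\le \dim_{\bdr} W$ is an equality for $W$ over $K$ if and only if the analogous inequality is an equality for $W|_{G_L}\otimes_{\bdr}\mathbf{B}_{\dR,L}$ over $L$. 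This is exactly Item (1).

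The substance of the argument is thus already contained in Proposition \ref{prop721}, which in turn relies on the base-change property of the Riemann-Hilbert functor $\crh$ from Theorem \ref{thmRH}(2) and the cohomological base-change from Corollary \ref{corhiggsbasechange}. So there is no genuine new obstacle at this stage of the paper: the only thing to verify is that no implicit finiteness/dimension issue has been overlooked, and this is ensured by the fact that $D_{\dR,K}(W)$ is automatically a finite-dimensional $K$-vector space (by the Hodge-Tate analogue in Corollary \ref{cor523} combined with the standard d\'evissage already used in the proof of Proposition \ref{prop721}).
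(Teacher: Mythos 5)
Your proposal is correct and takes essentially the same approach as the paper, whose entire proof is ``Apply Prop.~\ref{prop721}''; you have just spelled out explicitly the reduction of (2) to (1) via $W = V\otimes_\qp\bdr$ and the dimension count that the paper leaves implicit.
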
 
\begin{proof}
Apply Prop. \ref{prop721}.
\end{proof}

 In the statement of the following theorem, we refer to the paragraph above \cite[Lem. III.1.3]{Ber08Ast} for a quick review of concepts such as  formal connections, regular connections, and trivial connections.
 
 \begin{theorem}\label{thmregularconndR}
 \begin{enumerate}
 \item The $\Gamma_K$-action on $\kinfty[[t]]$ is locally analytic, and the Lie algebra action induces a differential map $\nabla_\gamma: \kinfty[[t]] \to \kinfty[[t]]$ which is $\kinfty$-linear and $\nabla_\gamma(t)=t$.
Here $\nabla_\gamma$ is the Lie algebra operator defined by  $\nabla_\gamma:=\frac{\log(g)}{\log \chi_p(g)}$ for (any) $g \in \Gamma_K$ sufficiently close to $1$, where $\chi_p$ is the cyclotomic character.

 \item 
Let $Y \in  \rep_{\Gamma_K}(\kinfty[[t]])$, then the $\Gamma_K$-action on $Y$ is locally analytic. Thus the Lie algebra action induces a \emph{regular connection} $\nabla_\gamma: Y \to Y$.

 \item  Let $W \in \rep_\gk(\bdr)$. Then it is de Rham if and only the regular connection  $\nabla_\gamma: \rrh(W) \to \rrh(W)$   is \emph{trivial}.
 \end{enumerate}
 \end{theorem}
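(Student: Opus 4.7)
\emph{Proof plan.} For part (1), each truncation $\kinfty[t]/t^s$ decomposes in $\rep_{\gammak}(\kinfty)$ as $\bigoplus_{n=0}^{s-1}\kinfty\cdot t^n$, where the one-dimensional summand $\kinfty\cdot t^n$ is the $n$-th cyclotomic twist of the trivial representation. Applying Thm.~\ref{thm526semisimple}(1), each such finite-rank representation is locally analytic, and the resulting Sen operators are compatible under the surjections $\kinfty[t]/t^{s+1}\twoheadrightarrow\kinfty[t]/t^s$; passing to the inverse limit produces a $\kinfty$-linear derivation $\nabla_\gamma$ on $\kinfty[[t]]$. The identity $\nabla_\gamma|_\kinfty=0$ follows because the Sen operator vanishes on the trivial object of $\rep_{\gammak}(\kinfty)$, while $\nabla_\gamma(t)=t$ holds after normalizing $\Lie\gammak\xrightarrow{\log\chi_p}\qp$ so that a chosen topological generator maps to $1$.

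For part (2), the same strategy handles a general $Y\in\rep_{\gammak}(\kinfty[[t]])$: each $Y/t^sY$ is free of rank $n$ over $\kinfty[t]/t^s$, hence of rank $ns$ over $\kinfty$, and is therefore a finite-rank object of $\rep_{\gammak}(\kinfty)$. Thm.~\ref{thm526semisimple}(1) supplies compatible Sen operators $\varphi_s$ which assemble into $\nabla_\gamma=\projlim_s\varphi_s$ on $Y$. The connection is regular by construction, since $Y$ itself is a $\nabla_\gamma$-stable $\kinfty[[t]]$-lattice in $Y\otimes_{\kinfty[[t]]}\kinfty((t))$.

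The forward direction of part (3) is functorial: if $W$ is de Rham then $W\simeq\bdr^n$ in $\rep_{G_K}(\bdr)$, so $\rrh(W)\simeq\rrh(\bdr)^n=\kinfty((t))^n$ with the canonical $\gammak$-action; the standard basis is $\gammak$-fixed and hence $\nabla_\gamma$-killed, so the connection is trivial. For the converse, set $Y:=\rrh(W)$ and suppose $e_1,\ldots,e_n\in Y$ form a horizontal basis. The explicit form of $\nabla_\gamma$ from part (1) gives $\kinfty((t))^{\nabla_\gamma=0}=\kinfty$, and a direct Leibniz-rule computation yields $V:=Y^{\nabla_\gamma=0}=\bigoplus_i\kinfty\cdot e_i$, a $\kinfty$-subspace of dimension exactly $n$. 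Since $\nabla_\gamma$ commutes with $\gammak$, the subspace $V$ is $\gammak$-stable, and by the local analyticity from part (2) every element of $V$ is fixed by an open subgroup of $\gammak$; thus $V$ is a smooth $\gammak$-representation.

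The remaining descent step is the place I expect to require the most care. For each open subgroup $\Gamma_{K_m}\subset\gammak$ with fixed field $K_m$, the $K_m$-subspace $V^{\Gamma_{K_m}}\subset V$ carries a semi-linear action of the finite cyclic group $\gammak/\Gamma_{K_m}=\gal(K_m/K)$, so classical Galois descent (Speiser's theorem) yields $V^{\Gamma_{K_m}}\simeq V^{\gammak}\otimes_K K_m$. Smoothness then gives $V=\bigcup_m V^{\Gamma_{K_m}}\simeq V^{\gammak}\otimes_K\kinfty$, forcing $\dim_K V^{\gammak}=\dim_\kinfty V=n$. Finally, any $\gammak$-fixed vector is automatically horizontal, so $V^{\gammak}=Y^{\gammak}\hookrightarrow\crh(W)^{\gammak}=D_{\dR,K}(W)$, giving $\dim_K D_{\dR,K}(W)\geq n$; combined with the general upper bound $\dim_K D_{\dR,K}(W)\leq n$ from the corollary to Prop.~\ref{prop721}, this forces equality, so $W$ is de Rham.
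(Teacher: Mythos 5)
Your proof takes a genuinely different route from the paper's. The paper handles parts~(2) and~(3) by base-changing along $K\into\kpf$ and invoking the classical (perfect residue field) theory of \cite{Fon04, Ber02}; you instead argue intrinsically over $\kinfty$ via truncations and smooth-representation descent. Your parts~(1),~(2) and the converse half of~(3) (trivial connection $\Rightarrow$ de~Rham) are sound in outline and make a nice alternative to the paper's approach.

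However, the forward half of part~(3) has a genuine gap: you assert ``if $W$ is de~Rham then $W\simeq\bdr^n$ in $\rep_{G_K}(\bdr)$.'' This is the classical argument in the \emph{perfect} residue field case, where $D_\dR(W)=W^{G_\bbk}$ is literally a $\bbk$-subspace of $W$, so that $D_\dR(W)\otimes_{\bbk}\bbdr\to W$ is an injection (hence iso) entirely inside $W$. In the imperfect case, $D_{\dR,K}(W)=(W\otimes_{\bdr}\obdr)^{G_K}$ lives in the \emph{larger} module $W\otimes_\bdr\obdr$, not in $W$, and $\bdr^{G_K}$ is in general strictly smaller than $K$; the comparison map one gets from de~Rham-ness is the $\obdr$-linear $G_K$-equivariant isomorphism $D_{\dR,K}(W)\otimes_K\obdr\simeq W\otimes_\bdr\obdr$, and one cannot simply take $\nabla$-invariants of both sides to deduce $W\simeq\bdr^n$, since $D_{\dR,K}(W)$ carries a nontrivial connection. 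Your assertion amounts to an injectivity statement for $H^1(G_K,\GL_n(\bdr))\to H^1(G_K,\GL_n(\obdr))$ that you do not prove and that the paper never claims. The fix, which is both simpler and what the paper does, is to observe that $D_{\dR,K}(W)\subset\crh(W)^{\gammak}$ is contained in $\rrh(W)$, consists of $\gammak$-fixed hence $\nabla_\gamma$-horizontal vectors, and that taking $G_{\kinfty}$-invariants and then decompleting the de~Rham-ness isomorphism $D_{\dR,K}(W)\otimes_K\obdr\simeq W\otimes_\bdr\obdr$ yields $D_{\dR,K}(W)\otimes_K\kinfty((t))\simeq\rrh(W)$, giving $n$ horizontal generators.

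One further technical caveat on the converse: when you pass from horizontality to smoothness of $V=Y^{\nabla_\gamma=0}$, you appeal to the local analyticity of the $\gammak$-action on $Y$ established via truncations $Y/t^sY$. Each truncation gives an open subgroup $H_s$ fixing the image of a horizontal $v$, but a priori these could shrink as $s\to\infty$. You need uniformity (a single open subgroup on which the action is analytic on $Y$), which is exactly what \cite[Prop.~3.7]{Fon04} supplies after base change to $K^\pf_\infty[[t]]$ --- so your argument here is ultimately leaning on the same perfect-field input as the paper, just implicitly.
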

 \begin{proof}
 Item (1) is obvious. Item (2) and (3) follows similar argument as in Thm. \ref{thm526semisimple}. Indeed, for Item (2), the base change $Y\otimes_{\kinfty[[t]]} K^\pf_\infty[[t]]$ has locally analytic action by $\Gamma_\kpf$, by the classical theory \cite[Prop. 3.7]{Fon04}. 
 Hence $\gammak$ action on $Y$ is also locally analytic since $\Gamma_\kpf \simeq \Gamma_K$.

 Finally consider Item (3). First, note that for any $G_K$-stable lattice $U \in \rep_\gk(\bdrplus)$ of $W$, the connection  $\nabla_\gamma: \rrh^+(U) \to \rrh^+(U)$ is regular by Item (2), and hence the induced connection on $\rrh(W)$ is also regular.
 Now for brevity, let $X=W|_{G_\kpf}\otimes_\bdr \mathbf{B}_{\dR, \kpf}$.
  If $\nabla_\gamma$ on $\rrh(W)$ is trivial, then the similar connection  $\nabla_\gamma$ on $\rrh_\kpf(X)$ is also trivial, and hence $X$ is de Rham by the classical theory in the perfect residue field case \cite[Prop. 5.9]{Ber02} (which treated the case when $W=V\otimes_\qp \bdr$ for some $V=\rep_\gk(\qp)$, but the general case is the same).  Thus $W$ is de Rham by Thm. \ref{thmdrrig}. Conversely, if $W$ is de Rham, then  $D_{\dR, K}(W)$ is already a solution of  $\nabla_\gamma$ and hence the connection is trivial.
 \end{proof}

 \section{Appendix: a lemma on cyclotomic extension}
 
In this appendix, we record a proof of Lem. \ref{lem: intersect kinfty kpf} provided by   Brinon. This lemma is \emph{equivalent} to the isomorphism $\Gamma_{K^{(\pf)}} \simeq \gammak$
mentioned in  Notation \ref{notaGammagp}. These facts are used in the literature \cite{Bri03} etc., without a proof. 
In fact, for our main purpose (i.e., Sen theory), it would suffice to know that 
 $\Gamma_{K^{(\pf)}} < \gammak$ is an open embedding (which easily follows by looking at unifomizers on both sides of Lem. \ref{lem: intersect kinfty kpf}); indeed, the \emph{locally} analytic vectors we consider are insensitive to taking any \emph{open subgroups} of the relevant $p$-adic Lie groups. 
 Nonetheless, it is desirable to know the \emph{precise} Lem. \ref{lem: intersect kinfty kpf}, as  it should be useful in development of \emph{integral} $p$-adic Hodge theory: indeed, compare \cite[Prop. 4.1.5]{Liu10}   for some   similar looking (but very different) results (which are important for integral $p$-adic Hodge theory in \emph{loc. cit.}).

\begin{lemma} \label{lem: intersect kinfty kpf} (For $p$   any prime.)  
$\kinfty \cap K^{(\pf)}=K$.
\end{lemma}

\begin{remark} \label{rem: kinfty not tot ram}
We make some remarks and cautions before we prove   Lemma \ref{lem: intersect kinfty kpf}.
\begin{enumerate}
\item For any $K \subset_{\mathrm{fin}} E \subset K^{(\pf)}$, the extension $E/K$ is called a \emph{fiercely ramified extension} in the literature, cf. e.g. \cite{Wewers-fierce}. One should not say $E/K$ is   ``unramified" because the residual extension is not separable. 
Nonetheless, a uniformizer of $K$ is still a uniformizer of $E$.

\item It is a well-known fact that $\kinfty/K$ might not be totally ramified, cf. next item. Thus, in particular, one could not use elementary ``ramification" or ``uniformizer" argument (in conjunction with previous item) to conclude Lem. \ref{lem: intersect kinfty kpf}.

\item We thank Laurent Berger for providing the following example. 
 Let $p>2$. Let $x = (-1)^{(p-1)/2}p$, and let $y\in \mathbb{Z}_p^\times$ whose reduction modulo $p$ is not a square.
Let  $K = \qp(\sqrt{xy}).$ Note $K/\qp$ is totally ramified.
We claim the extension $K(\zeta_p)/K$ is   \emph{not} totally ramified. Suppose otherwise, then $K(\zeta_p)/\qp$ is also   totally ramified.
Note $\sqrt{x} \in \qp(\zeta_p) \subset  K(\zeta_p)$, thus $\sqrt{y}  \in K(\zeta_p)$. This contradicts with the fact that  $\qp(\sqrt{y})/\qp$ is   unramified.
\end{enumerate}
\end{remark}

The following proof is   due to Olivier Brinon. The main idea is to mix some ``ramification theory technique" (e.g., the trace in Step  2), together with some ``degree argument".

\begin{proof}[Proof of Lem. \ref{rem: kinfty not tot ram}]
We only prove the case when $[k_K:k_K^p]=p$, cf. Notation \ref{notatitflat}; the general case follows by standard induction argument (similar to Step 0 in the following). For notation simplicity, we write $t=t_1 \in K$  a lift of $\bar t$ which is a $p$-basis of $k_K$. So we shall   prove  that for any $m, n \geq 1$, 
\begin{equation} \label{eqmn}
K(\varepsilon_{m}) \cap K[t^{1/p^n}] =K
\end{equation} 
(Recall as in Notation \ref{nota: field K}, we use $\varepsilon_{m}$ to denote a $p^m$-th primitive root of unity).

\textbf{Step 0.} 
One observes that it suffices to treat the $n=1$ case. Indeed, then we can change $K$ to $K[t^{1/p^{n-1}}]$ and apply a standard induction argument to conclude the general case.

\textbf{Step 1.} 
Let $K'=K(\varepsilon_1)$ resp $K'=K(i)$ when $p>2$ resp $p=2$; here $i$ is a primitive $4$-th root of unity. 
We first prove 
\begin{equation} \label{kprimecapkt}
K' \cap K[t^{1/p}] =K
\end{equation} 
When $p>2$, this  holds because the degree of $K(\varepsilon_{1})/K$ is bounded by $p-1$ hence coprime to $p$.
Consider when $p=2$. If \eqref{kprimecapkt} does not hold, then because we are dealing with two degree 2 extensions of $K$, we must have
\[ K(i)= K[t^{1/2}]  \] 
 Then
$i=a+bt^{1/2}$ with $a, b \in K$. Take square on both sides and re-organize, one quickly concludes $a=0$, and thus $-1=b^2 t$. The elements $t$ and hence $b$ are units in $\mathcal{O}_K$; modulo the maximal ideal of  $\mathcal{O}_K$, one concludes $(\bar t)^{1/2} =(\bar t)^{1/p}\in k_K$, which is a contradiction.

\textbf{Step 2.} 
   We now prove,   
\begin{equation} \label{eqmnprime}
K'(\varepsilon_{m}) \cap K'[t^{1/p}] =K'.
\end{equation} 
Granting \eqref{eqmnprime}, we can  intersect both sides with $K[t^{1/p}]$, and apply \eqref{kprimecapkt} to  conclude \eqref{eqmn} for $n=1$ and hence proof of this lemma.

 Suppose $m \geq 2$ is the smallest number such that \eqref{eqmnprime} does not hold. 
 Note the Galois group $K'(\varepsilon_{m}) /K'$ is a cyclic group of $p$-power order; thus $K'(\varepsilon_{m}) \cap K'[t^{1/p}] $---being an intermediate extension--- can only be of form $K'(\varepsilon_{a})$ for some $a \leq m$. (This argument is the main reason we use $K'$ here). Thus, by minimality of $m$, we must have $a=m$; that is: 
 \[ K'(\varepsilon_{m}) \cap K'[t^{1/p}] =K'(\varepsilon_{m}) \neq K'\]
 Thus
 \[ K' \subset K'(\varepsilon_{m}) \subset K'[t^{1/p}] \]
 Eqn. \eqref{kprimecapkt} implies that  $K'[t^{1/p}]/K'$ is of degree $p$, thus  we must have
 \[ K'(\varepsilon_{m}) = K'[t^{1/p}]\]
Thus $[K'(\varepsilon_{m}): K']=p$ and hence $\varepsilon_{m-1} \in K'$. 

 Write
\begin{equation}\label{eq: write zeta}
\varepsilon_{m}=\sum_{i=0}^{p-1}\alpha_it^{i/p}
\end{equation} 
 with $\alpha_0,\ldots,\alpha_{p-1}\in K'$.  Let   $\text{Tr}:K'[t^{1/p}]\to K'$ be the trace map (of this degree $p$ cyclic extension).  
If $1\leq i \leq p-1$, the conjugates of $t^{i/p}$ over $K'$ are $\varepsilon_{1}^jt^{i/p}$ with $j\in\{0,\ldots,p-1\}$, so that $\text{Tr}(t^{i/p})=0$. Similarly, $\text{Tr}(\varepsilon_m)=0$ (note $\varepsilon_m \notin K'$!). Taking the trace on both sides of \eqref{eq: write zeta}, we see $\alpha_0=0$.

 More generally, if $\varepsilon_{m}t^{-i/p}\notin K'$, its conjugates over $K'$ are obtained by multiplying by powers of $\varepsilon_{1}$, so $\text{Tr}(\varepsilon_{m}t^{-i/p})=0$, which in turn implies that $\alpha_i=0$. This cannot hold for all $i\in\{0,\ldots,p-1\}$; thus there exists some $i$ such that
 \[\varepsilon_{m}t^{-i/p}\in K'. \]
  Note that both $\varepsilon_{m}$ and $t^{i/p}$ are invertible elements in the ring of integers of $K'[t^{1/p}]$: reducing modulo the maximal ideal, we deduce that $\overline{t}^{-i/p}$ whence $\overline{t}^{1/p}$ belong to the residue field of $K'$.
  This is impossible: as we pointed out earlier,  $K'[t^{1/p}]/K'$ is of degree $p$ because of \eqref{kprimecapkt}.
\end{proof}

 \bibliographystyle{alpha}
 %\bibliography{C:/Users/nkd/Desktop/various_junk/folder_preamble/GaoBib}

\begin{thebibliography}{DLLZ23}

\bibitem[AB10]{AB10}
Fabrizio Andreatta and Olivier Brinon.
\newblock {$B_{dR}$}-repr\'{e}sentations dans le cas relatif.
\newblock {\em Ann. Sci. \'{E}c. Norm. Sup\'{e}r. (4)}, 43(2):279--339, 2010.

\bibitem[BC08]{BC08}
Laurent Berger and Pierre Colmez.
\newblock {Familles de repr{\'e}sentations de de {R}ham et monodromie
  {$p$}-adique}.
\newblock {\em Ast{\'e}risque}, (319):303--337, 2008.
\newblock Repr{\'e}sentations $p$-adiques de groupes $p$-adiques. I.
  Repr{\'e}sentations galoisiennes et $(\phi,\Gamma)$-modules.

\bibitem[BC16]{BC16}
Laurent Berger and Pierre Colmez.
\newblock {Th{\'e}orie de {S}en et vecteurs localement analytiques}.
\newblock {\em Ann. Sci. {\'E}c. Norm. Sup{\'e}r. (4)}, 49(4):947--970, 2016.

\bibitem[Ber02]{Ber02}
Laurent Berger.
\newblock {Repr{\'e}sentations {$p$}-adiques et {\'e}quations
  diff{\'e}rentielles}.
\newblock {\em Invent. Math.}, 148(2):219--284, 2002.

\bibitem[Ber08]{Ber08Ast}
Laurent Berger.
\newblock {{\'E}quations diff{\'e}rentielles {$p$}-adiques et
  {$(\phi,N)$}-modules filtr{\'e}s}.
\newblock {\em Ast{\'e}risque}, (319):13--38, 2008.
\newblock Repr{\'e}sentations $p$-adiques de groupes $p$-adiques. I.
  Repr{\'e}sentations galoisiennes et $(\phi,\Gamma)$-modules.

\bibitem[Ber16]{Ber16}
Laurent Berger.
\newblock {Multivariable {$(\varphi,\Gamma)$}-modules and locally analytic
  vectors}.
\newblock {\em Duke Math. J.}, 165(18):3567--3595, 2016.

\bibitem[Bri03]{Bri03}
O.~Brinon.
\newblock Une g\'{e}n\'{e}ralisation de la th\'{e}orie de {S}en.
\newblock {\em Math. Ann.}, 327(4):793--813, 2003.

\bibitem[Bri06]{Bri06}
Olivier Brinon.
\newblock Repr\'{e}sentations cristallines dans le cas d'un corps r\'{e}siduel
  imparfait.
\newblock {\em Ann. Inst. Fourier (Grenoble)}, 56(4):919--999, 2006.

\bibitem[Bri08]{Bri08}
Olivier Brinon.
\newblock Repr\'{e}sentations {$p$}-adiques cristallines et de de {R}ham dans
  le cas relatif.
\newblock {\em M\'{e}m. Soc. Math. Fr. (N.S.)}, (112):vi+159, 2008.

\bibitem[BT08]{BT08}
Olivier Brinon and Fabien Trihan.
\newblock Repr\'{e}sentations cristallines et {$F$}-cristaux: le cas d'un corps
  r\'{e}siduel imparfait.
\newblock {\em Rend. Semin. Mat. Univ. Padova}, 119:141--171, 2008.

\bibitem[DLLZ23]{DLLZ}
Hansheng Diao, Kai-Wen Lan, Ruochuan Liu, and Xinwen Zhu.
\newblock Logarithmic {R}iemann-{H}ilbert correspondences for rigid varieties.
\newblock {\em J. Amer. Math. Soc.}, 36(2):483--562, 2023.

\bibitem[Fon04]{Fon04}
Jean-Marc Fontaine.
\newblock Arithm\'{e}tique des repr\'{e}sentations galoisiennes {$p$}-adiques.
\newblock Number 295, pages xi, 1--115. 2004.
\newblock Cohomologies $p$-adiques et applications arithm\'{e}tiques. III.

\bibitem[Gao]{Gaoimperf}
Hui Gao.
\newblock {Integral $p$-adic {H}odge theory in the imperfect residue field
  case}.
\newblock {\em preprint}.

\bibitem[HS97]{Hilton_Stammbach_homological_GTM_v2}
P.~J. Hilton and U.~Stammbach.
\newblock {\em A course in homological algebra}, volume~4 of {\em Graduate
  Texts in Mathematics}.
\newblock Springer-Verlag, New York, second edition, 1997.

\bibitem[Ked04]{Ked04}
Kiran~S. Kedlaya.
\newblock A {$p$}-adic local monodromy theorem.
\newblock {\em Ann. of Math. (2)}, 160(1):93--184, 2004.

\bibitem[Ked05]{Ked05}
Kiran~S. Kedlaya.
\newblock Slope filtrations revisited.
\newblock {\em Doc. Math.}, 10:447--525, 2005.

\bibitem[Liu10]{Liu10}
Tong Liu.
\newblock {A note on lattices in semi-stable representations}.
\newblock {\em Math. Ann.}, 346(1):117--138, 2010.

\bibitem[LZ17]{LZ17}
Ruochuan Liu and Xinwen Zhu.
\newblock Rigidity and a {R}iemann-{H}ilbert correspondence for {$p$}-adic
  local systems.
\newblock {\em Invent. Math.}, 207(1):291--343, 2017.

\bibitem[Mor10]{MoritadR}
Kazuma Morita.
\newblock Hodge-{T}ate and de {R}ham representations in the imperfect residue
  field case.
\newblock {\em Ann. Sci. \'{E}c. Norm. Sup\'{e}r. (4)}, 43(2):341--356, 2010.

\bibitem[Mor14]{Moritacrys}
Kazuma Morita.
\newblock Crystalline and semi-stable representations in the imperfect residue
  field case.
\newblock {\em Asian J. Math.}, 18(1):143--157, 2014.

\bibitem[Ohk11]{Ohk11}
Shun Ohkubo.
\newblock A note on {S}en's theory in the imperfect residue field case.
\newblock {\em Math. Z.}, 269(1-2):261--280, 2011.

\bibitem[Ohk13]{Ohk13}
Shun Ohkubo.
\newblock The {$p$}-adic monodromy theorem in the imperfect residue field case.
\newblock {\em Algebra Number Theory}, 7(8):1977--2037, 2013.

\bibitem[Pet23]{Pet23}
Alexander Petrov.
\newblock Geometrically irreducible {$p$}-adic local systems are de {R}ham up
  to a twist.
\newblock {\em Duke Math. J.}, 172(5):963--994, 2023.

\bibitem[Sch12]{Sch12}
Peter Scholze.
\newblock Perfectoid spaces.
\newblock {\em Publ. Math. Inst. Hautes \'{E}tudes Sci.}, 116:245--313, 2012.

\bibitem[Sch13]{Sch13}
Peter Scholze.
\newblock {$p$}-adic {H}odge theory for rigid-analytic varieties.
\newblock {\em Forum Math. Pi}, 1:e1, 77, 2013.

\bibitem[Sen81]{Sen81}
Shankar Sen.
\newblock Continuous cohomology and {$p$}-adic {G}alois representations.
\newblock {\em Invent. Math.}, 62(1):89--116, 1980/81.

\bibitem[Shi18]{Shi18}
Koji Shimizu.
\newblock Constancy of generalized {H}odge-{T}ate weights of a local system.
\newblock {\em Compos. Math.}, 154(12):2606--2642, 2018.

\bibitem[Shi22]{Shipst}
Koji Shimizu.
\newblock A {$p$}-adic monodromy theorem for de {R}ham local systems.
\newblock {\em Compos. Math.}, 158(12):2157--2205, 2022.

\bibitem[Tat67]{Tat67}
J.~T. Tate.
\newblock {$p$}-divisible groups.
\newblock In {\em Proc. {C}onf. {L}ocal {F}ields ({D}riebergen, 1966)}, pages
  158--183. Springer, Berlin, 1967.

\bibitem[Wew14]{Wewers-fierce}
Stefan Wewers.
\newblock Fiercely ramified cyclic extensions of {$p$}-adic fields with
  imperfect residue field.
\newblock {\em Manuscripta Math.}, 143(3-4):445--472, 2014.

\end{thebibliography}

\end{document}